\documentclass[12pt]{amsart}



\usepackage{amssymb}

\usepackage{enumitem}

\usepackage{graphicx}

\makeatletter
\@namedef{subjclassname@2020}{%
  \textup{2020} Mathematics Subject Classification}
\makeatother

\usepackage[T1]{fontenc}


\newtheorem{theorem}[equation]{Theorem}
\newtheorem{conjecture}[equation]{Conjecture}

\newtheorem{lemma}[equation]{Lemma}
\newtheorem{proposition}[equation]{Proposition}

\theoremstyle{definition}
\newtheorem{definition}[equation]{Definition}
\newtheorem{remark}[equation]{Remark}

\numberwithin{equation}{section}


\frenchspacing

\textwidth=13.5cm
\textheight=23cm
\parindent=16pt
\oddsidemargin=-0.5cm
\evensidemargin=-0.5cm
\topmargin=-0.5cm





\begin{document}


\baselineskip=17pt


\title{The cubic Pell Equation L-function}
\author{Dorian Goldfeld}
\address{Department of Mathematics \\ Columbia University \\ New York \\NY 10027 \\USA\\
}
\email{goldfeld@columbia.edu}

\author{Gerhardt Hinkle}
\address{Department of Mathematics \\ Columbia University \\ New York \\NY 10027 \\USA}
\email{gnh2109@columbia.edu}

\date{}
\dedicatory{Dedicated to the memory of Andrzej Schinzel}

\begin{abstract}
For $d > 1$ a cubefree rational integer, we define an $L$-function (denoted $L_d(s)$) whose coefficients are derived from the cubic theta function for $\mathbb Q\left(\sqrt{-3}\right)$. The Dirichlet series defining $L_d(s)$ converges for $\textup{Re}(s) > 1$, and its coefficients vanish except at values corresponding to integral solutions of $mx^3 - dny^3 = 1$ in $\mathbb Q\left(\sqrt{-3}\right)$, where $m$ and $n$ are squarefree. By generalizing the methods used to prove the Takhtajan-Vinogradov trace formula, we obtain the meromorphic continuation of $L_d(s)$ to $\textup{Re}(s) > \frac{1}{2}$ and prove that away from its poles, it satisfies the bound $L_d(s) \ll |s|^{\frac{7}{2}}$ and has a possible simple pole at $s = \frac{2}{3}$, possible poles at the zeros of a certain Appell hypergeometric function, with no other poles. We conjecture that the latter case does not occur, so that $L_d(s)$ has no other poles with $\textup{Re}(s) > \frac{1}{2}$ besides the possible simple pole at $s = \frac{2}{3}$.
\end{abstract}

\subjclass[2020]{Primary 11F30; Secondary 11D25}

\keywords{cubic Pell equation, cubic theta function, Takhtajan--Vinogradov trace formula, Picard hypergeometric function}

\maketitle

\section{Introduction} 

Fix the quadratic number field $K=\mathbb Q\left(\sqrt{-3}\right)$ with ring of integers $\mathcal O_K.$  We consider the family of cubic Pell equations
$$mx^3-dny^3=1$$
where $d>1$ is a fixed cube-free rational integer, $x,y\in\mathcal O_K$ are variables, and $m,n \in \mathcal O_K$ with $m,n$  squarefree. 

Set $\lambda=\sqrt{-3}.$ In Definition \ref{Lfunction},  the Pell equation L-function
\[
  L_d(s) := \sum_{\nu \in \lambda^{-3} \mathcal O_K } \tau(\nu) \overline{\tau(1 + d\nu)} |\nu (1 + d\nu)|^{-s}, \qquad\quad (s\in\mathbb C, \;\text{\rm Re}(s) > 1),
\]
is introduced where $\tau(\nu)$ is the Fourier coefficient of the cubic theta function (see Proposition \ref{CubicThetaFunction}). It will be shown that the coefficient $\tau(\nu)\overline{\tau(1+d\nu)}$ vanishes unless 
$$\nu = mx^3, \qquad 1+d\nu = ny^3,$$ where $m,n,x,y\in\mathcal O_K$ with $m,n$ squarefree, i.e., the coefficient of the Dirichlet series for $L_d(s)$ vanishes unless it is coming from a solution of the Pell equation.

The main result of this paper is the meromorphic continuation of the expression $\mathcal F\left(s,\frac{(d+1)^2}{2d}\right)\cdot L_d(s)$ to $\text{\rm Re}(s) > \frac12$, where
$$\mathcal F(s,x) := \int\limits_0^1 \left( t^{s-\frac43} + t^{s-\frac23}  \right) \left(x\cdot t + \tfrac{(t-1)^2}{2}\right)^{-s}  dt$$
for $s \in \mathbb C$, $\textup{Re}(s) > \frac{1}{3}$, and $x > 0$ is a special case of \'Emile Picard's integral representation \cite{MR1508705} of the Appell hypergeometric function.

\begin{theorem} \label{MainTheorem} Let $d>1$ be a  cubefree rational integer. The  function $\mathcal F\left(s,\frac{(d+1)^2}{2d}\right)\cdot L_d(s)$ has meromorphic continuation to $\text{\rm Re}(s)> \frac12$ with at most a simple pole at $s = \frac23.$

Fix $\varepsilon> 0.$ In the region $\text{\rm Re}(s)> \frac12+\varepsilon$ and $|s-\frac23| > \varepsilon$ we have the  bound
$$\left|\mathcal F\left(s,\tfrac{(d+1)^2}{2d}\right)\cdot L_d(s)\right| \; \ll_{d,\varepsilon} \;  |s|^3.$$
\end{theorem}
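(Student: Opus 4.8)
The plan is to realize the product $\mathcal F\!\left(s,\frac{(d+1)^2}{2d}\right)\cdot L_d(s)$ as a single period integral over $\Gamma\backslash\mathbb H^3$ (here $\Gamma$ is the relevant congruence subgroup of $\mathrm{PSL}_2(\mathcal O_K)$ acting on hyperbolic $3$-space $\mathbb H^3$) and then to analyze that integral by the spectral methods underlying the Takhtajan--Vinogradov trace formula. Concretely, I would integrate the cubic theta function of Proposition \ref{CubicThetaFunction} against a suitable translate of itself (the combination descends to $\Gamma\backslash\mathbb H^3$ since the metaplectic character cancels) and against a real-analytic Eisenstein series $E(P,s)$, and then unfold. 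Because the Fourier--Whittaker coefficients of the theta function are precisely the $\tau(\nu)$, unfolding $E(P,s)$ collapses the integral to a sum over the lattice frequencies $\nu\in\lambda^{-3}\mathcal O_K$ with coefficient $\tau(\nu)\overline{\tau(1+d\nu)}$, the shift $1+d\nu$ being forced by the translate that encodes the parameter $d$. By the vanishing statement already established, only Pell solutions survive.

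The second step is the archimedean computation. After unfolding, each term carries an integral over the height coordinate of $\mathbb H^3$ of a product of two $K$-Bessel factors against a power of the height. By homogeneity this integral factors as $|\nu(1+d\nu)|^{-s}$ times a universal integral independent of $\nu$; a change of variables turns the universal integral into Picard's integral representation of the Appell function, namely $\mathcal F\!\left(s,\frac{(d+1)^2}{2d}\right)$. The shape parameter $\frac{(d+1)^2}{2d}$ is exactly the one for which $xt+\frac{(t-1)^2}{2}$ factors as $\tfrac12(t+d)\bigl(t+\tfrac1d\bigr)$, reproducing the two linear factors $(t+d)$ and $(1+dt)$ dictated by the pair $\nu,\,1+d\nu$. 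This identifies $\mathcal F\!\left(s,\frac{(d+1)^2}{2d}\right)\cdot L_d(s)$ with the period integral for $\mathrm{Re}(s)>1$.

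The heart of the argument is the spectral analysis of the period integral. Following Takhtajan--Vinogradov, I would expand it against the spectral decomposition of $L^2(\Gamma\backslash\mathbb H^3)$ into Maass cusp forms, the constant function, and the continuous spectrum of Eisenstein series. Since the cubic theta function is not cuspidal (it arises as a residue of a metaplectic Eisenstein series), the product does not decay at the cusps and the period integral is only conditionally convergent; the main technical work is to truncate in the style of the Maass--Selberg relations, evaluate the truncated inner products, and analytically continue, carefully tracking the boundary terms. The meromorphic continuation to $\mathrm{Re}(s)>\frac12$ then follows from that of the Eisenstein series and the associated resolvent; the cuspidal and continuous contributions remain holomorphic in this range, and the unique pole in $\mathrm{Re}(s)>\frac12$, a simple pole at $s=\frac23$, is produced by the polar part of the Eisenstein spectrum, equivalently by the residual constant-function contribution, its location reflecting the cubic exponents $\frac43,\frac23$ visible in $\mathcal F$.

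Finally, for the bound $|s|^3$ I would estimate each spectral constituent separately: Stirling's formula on the gamma factors coming from the Mellin transforms of the Bessel functions, convexity (trivial) bounds on the Eisenstein series and on the spectral density, and a direct estimate of the Appell integral, combined with the rate of convergence of the spectral expansion, should yield polynomial growth of degree $3$ uniformly in the region $\mathrm{Re}(s)>\frac12+\varepsilon$, $|s-\frac23|>\varepsilon$. I expect the principal obstacle to be the third step: making the regularized spectral identity rigorous for the non-cuspidal input, isolating the single residual pole, and proving that neither the discrete nor the continuous spectrum contributes additional poles in the strip $\frac12<\mathrm{Re}(s)<1$.
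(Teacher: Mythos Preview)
Your overall architecture---pair a Poincar\'e-type kernel with $\theta\overline{\theta_d}$, unfold geometrically, and analyze spectrally---is indeed what the paper does.  But there is a genuine gap in your second step that, if it were true, would make this paper an order of magnitude shorter.

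You write that after unfolding, ``by homogeneity this integral factors as $|\nu(1+d\nu)|^{-s}$ times a universal integral independent of $\nu$.''  This is false, and it is precisely the failure of this factorization that constitutes the main difficulty.  After unfolding the Poincar\'e series $P_1$ (an Eisenstein series will not do: the extra factor $e(\gamma z)$ in $P_1$ is what forces the frequency shift $\mu=1+d\nu$ rather than $\mu=d\nu$; the translate $\theta_d$ alone only supplies the $d\nu$), the $\nu$-th term carries the archimedean integral
\[
\int_0^\infty K_{1/3}(4\pi|\nu|y)\,K_{1/3}(4\pi|1+d\nu|y)\,e^{-4\pi y}\,y^{2s}\,\frac{dy}{y}.
\]
The factor $e^{-4\pi y}$ coming from $P_1$ destroys homogeneity: no rescaling of $y$ removes the $\nu$-dependence.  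What one actually obtains is $\mathcal F\bigl(s,\,a_d(\nu)+1\bigr)$ (times explicit gamma factors), where
\[
a_d(\nu)=\frac{|\nu|^2+|1+d\nu|^2-1}{2|\nu|\,|1+d\nu|}
=\frac{d^2+1}{2d}+O_d\bigl(|\nu(1+d\nu)|^{-1/2}\bigr).
\]
So the Appell parameter depends on $\nu$ and only \emph{tends} to the value $\tfrac{(d+1)^2}{2d}$ you want.  The paper handles this by first writing the archimedean integral as a Barnes-type contour integral in an auxiliary variable $w$, shifting that contour to pick up residues (this is where Picard's integral representation enters and simplifies the residues to the form $\mathcal F(s,\cdot)$), and then binomially expanding each $\mathcal F\bigl(s,a_d(\nu)+1\bigr)$ around the limit point $\tfrac{(d+1)^2}{2d}$.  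The $k=0$ and $k=1$ terms of that expansion produce $\mathcal F\bigl(s,\tfrac{(d+1)^2}{2d}\bigr)L_d(s)$ and a companion $L_d^*(s)$-term; the tail $k\ge 2$ is absolutely convergent for $\mathrm{Re}(s)>0$.  Controlling the binomial tail uniformly in $|s|$ (so as to get polynomial rather than exponential growth) requires a further splitting of the $\nu$-sum at $|\nu|\sim|s|$.  None of this is visible in your outline, and without it you cannot pass from the spectral identity to a statement about $\mathcal F\cdot L_d$ specifically.
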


\begin{remark} The proof of Theorem \ref{MainTheorem} is given in \S 10  while the residue of the possible pole at $s=\frac23$ is determined in STEP 8 in \S 10. Theorem  \ref{RelatingSd(s)} introduces a  modification of $L_d(s)$ (denoted $L_d^\#(s)$) and it is proved that $L_d^\#(s)$ has meromorphic continuation to $\text{\rm Re}(s)>\frac13$ with a possible pole at $s=\frac23$ and  infinitely many poles  on the line $\text{\rm Re}(s) = \frac12$ coming from Maass cusp forms including a double pole at $s = \frac 12.$ \end{remark}

With the method of steepest descent (see Lemma \ref{SteepestDescent}) we show that the Picard integral satisfies 
$$\left|\mathcal F\left(s,\tfrac{(d + 1)^2}{2d}\right)\right| \sim \; C_d\cdot |s|^{-\frac12}$$ 
for $\text{\rm Re}(s)>\frac13$ fixed and  $\big|\text{\rm Im}(s)\big| \to\infty$,  where $C_d>0$ is a fixed constant depending at most on $d$. This allows us to  obtain the meromorphic continuation and growth of  $L_d(s)$ away from poles.

\begin{theorem}
Fix $\varepsilon > 0$. Let $d > 1$ be a cubefree rational integer. The function $L_d(s)$ has meromorphic continuation to $\textup{Re}(s) > \frac{1}{2}$ with at most a simple pole at $s = \frac{2}{3}$ and possible poles at the zeros of $\mathcal F\left(s,\frac{(d + 1)^2}{2d}\right)$ with $\frac{1}{2} < \textup{Re}(s) \leq 1$.
 In the region $\textup{Re}(s) > \frac{1}{2} + \varepsilon$  and $|s - \rho| > \varepsilon$ (for any pole $\rho$ of $L_d(s)$)  we have the bound
    $
      L_d(s) \ll_{d,\varepsilon} |s|^{\frac{7}{2}}.
   $
\end{theorem}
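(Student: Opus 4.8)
The strategy is to isolate $L_d(s)$ by dividing the product controlled by Theorem~\ref{MainTheorem} by the Picard integral in the denominator. The plan is to write
$$
L_d(s) \;=\; \frac{\mathcal F\!\left(s,\tfrac{(d+1)^2}{2d}\right)\cdot L_d(s)}{\mathcal F\!\left(s,\tfrac{(d+1)^2}{2d}\right)},
$$
and to read off both the continuation and the growth of $L_d$ from the properties of the numerator and denominator separately. The numerator is handled entirely by Theorem~\ref{MainTheorem}: it is meromorphic on $\text{Re}(s)>\frac12$ with at most a simple pole at $s=\frac23$ and no other poles, and it satisfies $\ll_{d,\varepsilon}|s|^3$ in the region $\text{Re}(s)>\frac12+\varepsilon$, $|s-\frac23|>\varepsilon$. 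For the denominator I would use that $\mathcal F(s,x)$ is holomorphic on $\text{Re}(s)>\frac13$ (directly from its defining integral), hence holomorphic and not identically zero on $\text{Re}(s)>\frac12$.

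Meromorphic continuation then follows at once: the quotient of a meromorphic function by a nonzero holomorphic function is meromorphic on $\text{Re}(s)>\frac12$, with poles lying among the pole of the numerator and the zeros of $\mathcal F$. At $s=\frac23$ the numerator contributes at most a simple pole, so if $\mathcal F$ does not vanish there then $L_d$ inherits at most a simple pole; if $\frac23$ is itself a zero of $\mathcal F$ the order is governed by combining the two, and this point is then absorbed into the list of zeros of $\mathcal F$ in $\frac12<\text{Re}(s)\le 1$. To confine the remaining zeros to that strip I would invoke the absolute convergence of the defining Dirichlet series for $\text{Re}(s)>1$: there $L_d$ is already holomorphic, so no zero of $\mathcal F$ with $\text{Re}(s)>1$ can produce a pole of $L_d$ (equivalently, the numerator must vanish to matching order at such a point). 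Thus the only candidate poles beyond $s=\frac23$ are the zeros of $\mathcal F\!\left(s,\tfrac{(d+1)^2}{2d}\right)$ with $\frac12<\text{Re}(s)\le 1$.

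For the growth bound I would combine the two inputs quantitatively after partitioning $\text{Re}(s)>\frac12+\varepsilon$. On the half-plane $\text{Re}(s)\ge 1+\varepsilon$ the absolutely convergent Dirichlet series already gives $L_d(s)\ll_{d,\varepsilon}1$, so the claim is trivial. On the strip $\frac12+\varepsilon\le\text{Re}(s)\le 1+\varepsilon$ I would divide: the method of steepest descent (Lemma~\ref{SteepestDescent}) gives $\bigl|\mathcal F(s,\tfrac{(d+1)^2}{2d})\bigr|\sim C_d\,|s|^{-1/2}$ with $C_d>0$ as $|\text{Im}(s)|\to\infty$, so, taking this uniformly over the compact range of $\text{Re}(s)$, there is a threshold $T_0$ with $1/|\mathcal F|\ll_d|s|^{1/2}$ for $|\text{Im}(s)|>T_0$; dividing the numerator bound $|s|^3$ by $|\mathcal F|\gg|s|^{-1/2}$ then yields $L_d(s)\ll_{d,\varepsilon}|s|^{7/2}$. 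On the remaining compact box $\frac12+\varepsilon\le\text{Re}(s)\le 1+\varepsilon$, $|\text{Im}(s)|\le T_0$, after excising $\varepsilon$-balls around the poles, $L_d$ is continuous and hence bounded. That the poles form a finite set is itself a byproduct of the asymptotic, which forces every zero of $\mathcal F$ in the strip into the region $|\text{Im}(s)|\le T_0$, where the zeros are isolated.

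The main obstacle is the control of $\mathcal F$ near the real axis, which the steepest-descent argument does not reach: it describes $\mathcal F$ only for large $|\text{Im}(s)|$ and therefore cannot exclude zeros of $\mathcal F$ in the bounded box $\frac12<\text{Re}(s)\le 1$, $|\text{Im}(s)|\le T_0$. These surviving zeros are precisely the source of the ``possible poles'' in the statement, and ruling them out is exactly the content of the conjecture rather than of the theorem. The technical points I expect to need care are (i) that the asymptotic of Lemma~\ref{SteepestDescent} holds uniformly enough across the compact $\text{Re}(s)$-range, so that the lower bound $|\mathcal F|\gg|s|^{-1/2}$ is valid throughout $|\text{Im}(s)|>T_0$, and (ii) the bookkeeping of the order at $s=\frac23$ when it happens to coincide with a zero of $\mathcal F$.
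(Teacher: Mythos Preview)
Your proposal is correct and follows essentially the same route as the paper: isolate $L_d(s)$ by dividing $\mathcal F\!\left(s,\tfrac{(d+1)^2}{2d}\right)L_d(s)$ (controlled by Theorem~\ref{MainTheorem}) by $\mathcal F$, use holomorphy of $\mathcal F$ on $\mathrm{Re}(s)>\tfrac13$ and absolute convergence of $L_d$ for $\mathrm{Re}(s)>1$ to locate the possible poles, and then apply the steepest-descent asymptotic of Lemma~\ref{SteepestDescent} to convert the $|s|^3$ bound on the product into $|s|^{7/2}$ for $L_d$. The paper's Section~11 phrases the argument through $L_d^{\#}$ rather than citing Theorem~\ref{MainTheorem} directly, but since $L_d^{\#}(s)=\mathcal F(s)L_d(s)-s\,\mathcal F(s+1)L_d^*(s)$ with the second term holomorphic and smaller on $\mathrm{Re}(s)>\tfrac12$, this amounts to the same computation; your partition into $\mathrm{Re}(s)\ge 1+\varepsilon$, the vertical strip with $|\mathrm{Im}(s)|$ large, and the residual compact box is in fact a bit more explicit than the paper's.
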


Numerical computations suggest that $\mathcal F\left(s,\frac{(d + 1)^2}{2d}\right)$ is nonvanishing on the relevant region, so we formulate the following conjecture.

\begin{conjecture}
Fix $\varepsilon > 0$. Let $d > 1$ be a cubefree rational integer. The function $L_d(s)$ has meromorphic continuation to $\textup{Re}(s) > \frac{1}{2}$ with at most a simple pole at $s = \frac{2}{3}$.
In the region $\textup{Re}(s) > \frac{1}{2} + \varepsilon$ and $\left|s - \frac{2}{3}\right| > \varepsilon$ we have the bound
    $
      L_d(s) \ll_{d,\varepsilon} |s|^{\frac{7}{2}}.
   $
 
\end{conjecture}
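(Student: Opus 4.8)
The conjecture differs from the preceding theorem only in the assertion that $\mathcal F\!\left(s,\tfrac{(d+1)^2}{2d}\right)$ has no zeros in the strip $\tfrac12<\textup{Re}(s)\le 1$, so the plan is to reduce everything to this nonvanishing statement and then attack the latter directly. First I would record that $\mathcal F(s,x)$ is holomorphic for $\textup{Re}(s)>\tfrac13$, since the only boundary singularity of the integrand is $t^{s-\frac43}$ at $t=0$, which is integrable there, while $g(t):=xt+\tfrac{(t-1)^2}{2}=\tfrac12\bigl(t^2+2(x-1)t+1\bigr)$ is strictly positive on $[0,1]$ for $x=\tfrac{(d+1)^2}{2d}>1$. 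Granting the nonvanishing of $\mathcal F$ on $\tfrac12<\textup{Re}(s)\le 1$, I would write $L_d(s)=\bigl(\mathcal F\cdot L_d\bigr)(s)\big/\mathcal F\!\left(s,\tfrac{(d+1)^2}{2d}\right)$ and invoke Theorem \ref{MainTheorem}: the numerator is meromorphic on $\textup{Re}(s)>\tfrac12$ with at most a simple pole at $s=\tfrac23$, and division by a holomorphic, zero-free function introduces no new poles, giving the claimed continuation. For the bound, in $\textup{Re}(s)>\tfrac12+\varepsilon$, $\bigl|s-\tfrac23\bigr|>\varepsilon$ I would split on $|\textup{Im}(s)|$: for $|\textup{Im}(s)|$ large the asymptotic $\bigl|\mathcal F\bigr|\sim C_d|s|^{-\frac12}$ from Lemma \ref{SteepestDescent} yields $L_d(s)\ll|s|^{3}\cdot|s|^{\frac12}=|s|^{\frac72}$, while on the complementary bounded-height region (with $\textup{Re}(s)$ large handled by the absolutely convergent Dirichlet series) $\mathcal F$ is continuous and nonvanishing, hence bounded below, so $L_d$ is bounded there; together these give $L_d\ll_{d,\varepsilon}|s|^{\frac72}$ away from $s=\tfrac23$ alone.

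It remains to prove the nonvanishing, and here I would first rewrite $\mathcal F$ as an oscillatory integral. Writing $s=\sigma+i\tau$, the two monomials share the phase $t^{i\tau}=e^{i\tau\log t}$, so
\[
  \mathcal F(s,x)=\int_0^1 h_\sigma(t)\,e^{i\tau\phi(t)}\,dt,\qquad h_\sigma(t):=\bigl(t^{\sigma-\frac43}+t^{\sigma-\frac23}\bigr)g(t)^{-\sigma}>0,\quad \phi(t):=\log\tfrac{t}{g(t)}.
\]
A direct computation gives $\phi'(t)=\dfrac{1-t^2}{2t\,g(t)}>0$ on $(0,1)$, so $\phi$ is a strictly increasing bijection of $(0,1)$ onto $(-\infty,-\log x)$. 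Changing variables to $u=\phi(t)$ turns $\mathcal F$ into the Fourier transform $\int_{-\infty}^{-\log x}H_\sigma(u)\,e^{i\tau u}\,du$ of the positive, exponentially decaying amplitude $H_\sigma(u)=h_\sigma\bigl(t(u)\bigr)\big/\phi'\bigl(t(u)\bigr)$; one checks that its bilateral Laplace transform is exactly $\mathcal F(\sigma-z,x)$. At $\tau=0$ positivity gives $\mathcal F(\sigma,x)=\int_0^1 h_\sigma>0$, so there are no real zeros, and by continuity $\mathcal F$ is zero-free in a neighborhood of the segment $\tfrac12<\sigma\le1$, $\tau=0$.

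With these reductions in place the nonvanishing problem localizes to a bounded region. For $|\tau|$ large, Lemma \ref{SteepestDescent} already gives $\bigl|\mathcal F\bigr|\sim C_d|s|^{-\frac12}>0$, so $\mathcal F$ cannot vanish once $|\textup{Im}(s)|\ge T_0$ (uniformly for $\sigma$ in a compact subinterval of $(\tfrac12,1]$); combined with the previous paragraph, all that survives is the bounded region $\{\tfrac12<\sigma\le1,\ T_1\le|\tau|\le T_0\}$. My plan to eliminate zeros there is to show that $H_\sigma$ is a P\'olya frequency function, i.e.\ that the kernel $H_\sigma(u-v)$ is totally positive; by Schoenberg's theorem this is equivalent to $1/\mathcal F(\sigma-z,x)$ lying in the Laguerre--P\'olya class, and it forces the Fourier transform $\hat H_\sigma(\tau)=\mathcal F(\sigma-i\tau,x)$ to be zero-free for all real $\tau$. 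The total-positivity structure is strongly suggested by the variation-diminishing form of $H_\sigma$, which is a perturbation of the pure exponential $e^{(\sigma-\frac13)u}$ as $u\to-\infty$, and is consistent with the numerics; failing a uniform proof over $\sigma\in(\tfrac12,1]$, one could instead rule out zeros on each compact box $[\tfrac12+\varepsilon,1]\times\{T_1\le|\tau|\le T_0\}$ by the argument principle together with rigorous interval-arithmetic evaluation of $\mathcal F$ along its boundary. Proving the absence of zeros of this Appell--Picard integral in the intermediate range is the main — and as yet unresolved — obstacle, which is precisely why the statement is formulated as a conjecture rather than a theorem.
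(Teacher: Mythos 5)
You were asked to prove a statement that the paper itself does not prove: it is stated as a conjecture precisely because the nonvanishing of $\mathcal F\left(s,\tfrac{(d+1)^2}{2d}\right)$ on $\tfrac12 < \textup{Re}(s) \leq 1$ is supported only by numerical computation. Your proposal correctly isolates this as the sole missing ingredient, and your conditional reduction coincides with the paper's own route (the theorem in the section relating $L_d^{\#}(s)$ to $L_d(s)$): divide the continuation of $\mathcal F \cdot L_d$ furnished by Theorem \ref{MainTheorem} by a zero-free holomorphic $\mathcal F$, then trade the factor $|s|^{\frac12}$ from the steepest-descent asymptotic of Lemma \ref{SteepestDescent} to convert the bound $|s|^3$ into $|s|^{\frac72}$. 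Your unconditional partial results are also sound and go slightly beyond what the paper records: holomorphy of $\mathcal F$ for $\textup{Re}(s) > \tfrac13$; the computation $\phi'(t) = \frac{1 - t^2}{2t\,g(t)} > 0$ making the phase strictly monotone, so that $\mathcal F(\sigma + i\tau, x)$ is the Fourier transform of a positive, exponentially decaying amplitude; strict positivity of $\mathcal F$ on the real segment (and since $\mathcal F$ is holomorphic down to $\textup{Re}(s) = \tfrac13$, positivity on the \emph{closed} segment $\left[\tfrac12, 1\right]$ gives a uniform zero-free neighborhood, so your $T_1$ does not degenerate as $\sigma \downarrow \tfrac12$); and nonvanishing for $|\tau| \geq T_0$ from the saddle-point asymptotic. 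Together these confine any zeros, for each fixed $d$, to a compact box.

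The gap — which you candidly flag — is the compact intermediate region, and neither of your proposed closings constitutes a proof. The total-positivity route is essentially circular as stated: by Schoenberg's characterization, $H_\sigma$ being a P\'olya frequency function is more or less equivalent to $1/\mathcal F(\sigma - z, x)$ extending to an entire Laguerre--P\'olya function, which already presupposes the zero-freeness you are after; to break the circle you would have to verify total positivity of the translation kernel $H_\sigma(u - v)$ directly (all minors, or a composition-formula factorization), and nothing in the structure of $H_\sigma$ — a perturbation of $e^{(\sigma - \frac13)u}$ only near $u = -\infty$ — obviously yields this uniformly over $\sigma \in \left(\tfrac12, 1\right]$. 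The interval-arithmetic fallback is rigorous only box-by-box and, more seriously, $d$-by-$d$: the conjecture quantifies over all cubefree $d > 1$, and since $x = \frac{(d+1)^2}{2d} \to \infty$ as $d \to \infty$, a complete argument would additionally require nonvanishing uniformly in $d$ for large $x$ (plausibly by making the saddle-point analysis uniform in the parameter $a$, but this must be proved, not assumed). So your write-up is an honest conditional proof plus a correct localization of the obstruction; it does not establish the statement, which is consistent with the paper leaving it as a conjecture with no proof of its own.
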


In the recent paper of  Hoffstein-Jung-Lee \cite{https://doi.org/10.48550/arxiv.2109.10434} it is proved that the L-function
$$ \sum_{\nu \in \lambda^{-3} \mathcal O_K } |\tau(\nu)|^2 |\nu|^{-2s} = 2\cdot 3^{5+3s} \frac{\left(1+3^{1-2s}   \right)\left(1-3^{-s} \right)\zeta_K(3s-1) \zeta_K(s)}{\left(1-3^{-2s}\right) \zeta_K(2s)}$$
where $\zeta_K(s)$ is the zeta function of $K=\mathbb Q(\sqrt{-3})$. This result is obtained by considering the inner product of an Eisenstein series with the square of the absolute value of the cubic theta function. The proof of Theorem \ref{MainTheorem} follows a similar approach except, like the Takhtajan-Vinogradov trace formula \cite{MR581605}, we use a Poincar\'e series instead of the Eisenstein series, and unlike both of those, we replace $|\theta|^2$ with $\theta\overline{\theta_d}$, where $\theta_d(z) = \theta(dz)$. That is, we consider the inner product $\big\langle P_1(*,s),\theta\overline{\theta_d} \big\rangle$, where $P_1$ is a Poincar\'e series, $\theta$ is the cubic theta function for $K$, and $\theta_d(z) = \theta(dz)$.

The spectral side of the trace formula is presented in \S \ref{SpectralSideSection} and is evaluated by standard methods, which gives the meromorphic continuation of the inner product of the Poincar\'e series with $\theta\overline{\theta_d}$ as well as the explicit computation of the poles and their residues. The possible pole at $s=\frac23$ comes from the continuous spectrum (see Theorem \ref{Theorem:EisensteinBound}). The poles at the eigenvalues of the Laplacian and the double pole at $s =\frac12$ come from the discrete spectrum (see Theorem \ref{SpectralSideBound}). The growth of the inner product away from the poles is obtained in \S \ref{BoundingSpectralSideSection}.

The main difficulty in proving Theorem \ref{MainTheorem} comes from the geometric side of the trace formula in \S \ref{GeometricSideSection}, which involves the function $S_d(s)$ defined in Theorem \ref{Sd(s)Theorem}. The function $S_d(s)$ is essentially the inner product under consideration with a single term involving multiple gamma functions removed; it is $S_d(s)$ that eventually gives rise to the $L$-functions, so that the spectral analysis of the inner product and knowledge of the gamma function term yields the results for those $L$-functions. Although the coefficients $\tau(\nu) \overline{\tau(1 + d\nu)}$ coming from the cubic theta function appear in $S_d(s)$, they are twisted by Appell hypergeometric functions, so it is not at all clear how to extract the meromorphic continuation of $L_d(s)$ from $S_d(s)$. What arises naturally is the Dirichlet series $L_d^\#(s)$ (defined in \S \ref{cubicPellEqLfuns}), which can be thought of as a  version of $L_d(s)$ twisted by Appell hypergeometric functions.
 
The key idea for extracting $L_d(s)$ by itself is to first express $S_d(s)$ as an integral involving a ratio of gamma functions times the Appell hypergeometric function (with the Appell hypergeometric function coming from an integral involving the product of two Bessel functions that appears when directly taking the inner product) and then shift the line of integration, picking up residues at the poles of the gamma functions. We then replace the Appell hypergeometric function appearing in the shifted integral and the residues with the \'Emile Picard integral, which enormously simplifies all subsequent computations. In particular, we obtain an integral of the form $\mathcal F(s,x)$, allowing us to use the binomial expansion of $\mathcal F(s,x)$ about $x = x_0$. This ultimately yields the same expression in the main term for each of the summands, which we can thus pull out of the sum to obtain the $\mathcal F\left(s,\frac{(d + 1)^2}{2d}\right)$ term in $L_d^{\#}(s)$. Specifically, the $k = 0$ and $k = 1$ terms of the binomial expansion yield expressions involving $L_d^{\#}(s)$ and $L_d^{\#}\left(s + \frac{1}{2}\right)$, while the later terms are much smaller.

The complete proof of the meromorphic continuation of $L_d(s)$ and $L_d^\#(s)$ as well as their growth properties is presented in 8 separate STEPS in \S \ref{RelatingSdSSection}.

We believe that the techniques used in this paper could be applied to higher-degree theta functions. However, those cases have additional issues to be dealt with that do not arise in the cubic case.

\section{Basic notation}

The following notation will be used consistently throughout this paper.

\vskip 10pt
\hskip 20pt$\bullet\;d \ne 1$ is a cubefree positive rational integer;
 
\vskip 10pt
\hskip 20pt$\bullet \;K := \mathbb Q\left(\lambda\right),$ with $\lambda=\sqrt{-3};$

\vskip 5pt
\hskip 20pt$\bullet \; \mathcal O_K := \mathbb Z\left[ e^{\frac{2\pi i}{3}}  \right]$;

\vskip 5pt
\hskip 20pt {$\bullet \; $ {\large$\left( \frac{a}{b}  \right)_{\scriptscriptstyle 3}$} (with $a,b\in \mathcal O_K$) is the cubic residue symbol for $K$.

\begin{definition} {\bf (Upper half-plane $\mathfrak h^2$)} We define
the classical upper half-plane $\mathfrak h^2 := \{x+iy \mid x\in\mathbb R, \, y>0\}.$ 
\end{definition}

\begin{definition} {\bf (Quaternionic upper half-space $\mathfrak h^3$)} The quaternions are expressions of the form $a+bi+cj+dk$ where $a,b,c,d\in \mathbb R$ and $i^2=j^2=k^2=ijk = -1.$ Further, $\mathbb C$ is identified with the set of quaternions with $j=k=0.$ We define the quaternionic upper half-space to be
$\mathfrak h^3 := \{x+jy \mid x\in\mathbb C, \, y>0\}.$ 
\end{definition}

\begin{definition}{\bf (Trace and exponential function on   $\mathfrak h^3$)} Let $z=x+jy\in\mathfrak h^3.$  Define the trace
$\text{\rm tr}(z) := 2\text{\rm Re}(x) + 2iy$
and the exponential function
$$e(z) := e^{2\pi i \text{\rm tr}(z)} = e^{-4\pi y} e^{4\pi i \text{\rm Re}(x)}.$$
\end{definition}

\begin{definition}{\bf (Action of $\text{\bf\rm SL}(2,\mathbb C)$ on $\mathfrak h^3$)} The $\text{\rm SL}(2,\mathbb C)$  action on $\mathfrak h^3$ is given by
$$\begin{pmatrix} \alpha&\beta\\\gamma&\delta \end{pmatrix}z = (\alpha z+\beta) (\gamma z+\delta)^{-1}, \qquad \big(z\in \mathfrak h^3, \;\, \alpha,\beta,\gamma,\delta\in\mathbb C, \;\alpha\delta-\beta\gamma=1\big).$$
\end{definition}

\begin{definition} {\bf (The congruence subgroup $\Gamma(N)$)}  Let 
$N\in \mathcal O_K, N\ne 0$.  We  define the congruence subgroup 
$$\Gamma(N) := \left\{\gamma\in \text{\rm SL}(2, \mathcal O_K) \; \bigg| \;\gamma \equiv \begin{pmatrix} 1&0\\0&1\end{pmatrix} \big(\hskip-12pt\mod  N\big)\right\}$$ 
and
$$\Gamma_\infty(N) := \left\{  \left(\begin{matrix}\alpha&\beta\\\gamma&\delta \end{matrix}\right)\in \text{\rm SL}(2, \mathcal O_K) \; \bigg| \; \gamma\equiv  \left(\begin{matrix}1&0\\0&1 \end{matrix}\right) (\text{\rm mod}\; N)  \right\}.$$
\end{definition}

\section{Cubic Gauss sums}

\begin{definition}{\bf (Cubic Gauss sum)}
  Fix $\lambda=\sqrt{-3}$. For any $\mu \in \lambda^{-3} \mathcal O_K$ and $a\in\mathcal O_K$ satisfying $a \equiv 1 \pmod{3}$,  the cubic Gauss sum is
$$
  g(\mu,a) := \sum_{\beta \,\in\, \mathcal O_K \hskip-7pt\pmod{a}} \left(\frac{3\beta}{a}\right)_3 e\left(\frac{3\mu\beta}{a}\right).
$$
For $\mu \in \mathcal O_K$ this simplifies to
$$
  g(\mu,a) = \sum_{\beta \,\in\, \mathcal O_K\hskip-7pt\pmod{a}} \left(\frac{\beta}{a}\right)_3 e\left(\frac{\mu\beta}{a}\right).
$$
\end{definition}

\begin{definition} {\bf (The function $\tau$)} \label{tau}
We define $\tau$ according to the following formulae, where in all of the following expressions, $a, b \in \mathcal O_K$  with $a,b \equiv 1 \pmod 3$, and $a$ is squarefree:
\[
  \tau(\mu) := \begin{cases}
    \overline{g(\lambda^2,a)} \left|\frac{b}{a}\right| 3^{\frac{n}{2} + 2} & \text{if}\;\,  \mu = \pm \lambda^{3n - 4} ab^3, \; n \in \mathbb Z_{\geq 1}, \\
    &\\
    e^{-\frac{2\pi i}{9}} \overline{g(\omega\lambda^2,a)} \left|\frac{b}{a}\right| 3^{\frac{n}{2} + 2} & \text{if}\;\, \mu = \pm\omega\lambda^{3n - 4} ab^3, \; n \in\mathbb Z_{\geq 1}, \\
    &\\
    e^{\frac{2\pi i}{9}} \overline{g(\omega^2\lambda^2,a)} \left|\frac{b}{a}\right| 3^{\frac{n}{2} + 2} & \text{if}\;\, \mu = \pm\omega^2\lambda^{3n - 4} ab^3, \; n \in \mathbb Z_{\geq 1}, \\
    &\\
    \overline{g(1,a)} \left|\frac{b}{a}\right| 3^{\frac{n + 5}{2}} & \text{if}\;\, \mu = \pm\lambda^{3n - 3} ab^3, \; n \in \mathbb Z_{\geq 0}, \\
    &\\
    0 & \text{otherwise}.
  \end{cases}
\]
\end{definition}

\begin{remark}
  From unique factorization in $K$, any $\mu \in \lambda^{-3} \mathcal O_K$ has at most one of the above forms, and if it does have one of those forms, the values of $a$, $b$, and $n$ are unique, so the above definition is well-defined and we may denote by $a(\mu)$, $b(\mu)$, and $n(\mu)$ the values of $a$, $b$, and $n$ in the decomposition of $\mu$.
\end{remark}

\section{Cubic theta function}

Patterson \cite{MR563068}, following Kubota \cite{MR0255490}, gave a detailed explicit study of the simplest cubic theta function for the field $\mathbb Q(\sqrt{-3})$ which we briefly review. Let $\Gamma$ be any congruence subgroup of SL$(2,\mathcal O_K)$ whose level is divisible by 9. Then for $\gamma = \left(\begin{matrix}a&b\\c&d \end{matrix}\right) \in \Gamma$ define
$$\kappa(\gamma) := \begin{cases} \left(\frac{c}{d}\right)_3 & \text{if}\; c\ne 0,\\
 \;\;1 &\text{if}\; c=0, \end{cases}$$
 to be the Kubota symbol on $\Gamma$. This allows us to construct a metaplectic Eisenstein series.
 \begin{definition} {\bf (Cubic metaplectic Eisenstein series for $\mathbb Q(\sqrt{-3})$)} Let 
$s\in\mathbb C$ with $\text{\rm Re}(s) > 1.$ For $z=x+jy\in\mathfrak h^3$  and $I_s(z) := y^s$, we define the cubic metaplectic Eisenstein series by
 $$E^{(3)}(z,s) := \sum_{\gamma \in\Gamma_\infty(9)\backslash\Gamma(9)} \overline{\kappa(\gamma)}\, I_s(\gamma z)^{2s}.$$ 
 \end{definition}
 
 The Eisenstein series $E^{(3)}$ satisfies the automorphic relation
 $$E^{(3)}(s,\gamma z) = \kappa(\gamma) E^{(3)}(s, z)$$
 for all $\gamma \in \Gamma(9)$ and has a simple pole at $s=\frac23$ with residue the cubic theta function defined by
 $$\theta(z) = 2\,\underset{s=\frac23}{\text{\rm Res}}\; E^{(3)}(z,s).$$
 
\begin{proposition}{\bf (Fourier expansion of $\theta(z)$)}
\label{CubicThetaFunction} Let $z=x+jy \in\mathfrak h^3.$ The Fourier expansion of the cubic theta function for $\mathbb Q(\sqrt{-3})$ is given by
\begin{align*}
  \theta(z) := \sigma y^{\frac{2}{3}}\, + \underset{\mu\ne 0}{\sum_{ \mu\, \in\, \lambda^{-3}\cdot \mathcal O_K}} \tau(\mu) y K_{\frac{1}{3}}(4\pi|\mu|y) e(\mu z),
\end{align*}
where $\sigma = \frac{9\sqrt{3}}{2}$ and $K_{\frac{1}{3}}$ is the modified Bessel function of the second kind with order $\frac{1}{3}$.
\end{proposition}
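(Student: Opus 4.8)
The plan is to obtain the asserted expansion directly from the relation $\theta(z) = 2\,\mathrm{Res}_{s=2/3} E^{(3)}(z,s)$ by first computing the full Fourier--Whittaker expansion of the metaplectic Eisenstein series $E^{(3)}(z,s)$ and then extracting the residue at $s = \frac{2}{3}$ term by term. Since the defining sum is left-invariant under $\Gamma_\infty(9)$, the function $z \mapsto E^{(3)}(z,s)$ is invariant under the translations contributed by the unipotent part of $\Gamma_\infty(9)$, and therefore admits a Fourier expansion whose frequencies $\mu$ range over the dual lattice $\lambda^{-3}\mathcal{O}_K$ appearing in the statement. Each Fourier coefficient $a_\mu(y,s)$ is computed in the standard way, by parametrizing the cosets $\Gamma_\infty(9)\backslash\Gamma(9)$ through the Bruhat decomposition according to the lower-left entry $c$ and carrying out the resulting $x$-integration.

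First I would isolate the $c = 0$ contribution, which produces the single term $y^{2s}$ and is holomorphic at $s = \frac{2}{3}$, so that it does not contribute to the residue. For each fixed $c \neq 0$, summing over the remaining entries modulo $c$ turns the conjugated Kubota symbol $\overline{\kappa(\gamma)}$ into the cubic residue symbol and, together with the additive character, assembles precisely the cubic Gauss sum $g(\mu,\cdot)$ defined above. The $x$-integration extracting the $\mu$-th Fourier mode is a standard Fourier transform of a power $|cz+d|^{-2s}$; for $\mu \neq 0$ it yields the Macdonald--Bessel factor $y\,K_{2s-1}(4\pi|\mu|y)$, and for $\mu = 0$ a second power $\phi(s)\,y^{2-2s}$. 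Thus for $\mu \neq 0$ the coefficient $a_\mu(y,s)$ is a Dirichlet series in the Gauss sums $g(\mu,c)$ multiplied by $y\,K_{2s-1}(4\pi|\mu|y)$, while the constant term is $y^{2s} + \phi(s)\,y^{2-2s}$ with $\phi(s)$ a Dirichlet series in the $g(0,c)$.

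Taking the residue at $s = \frac{2}{3}$ is then immediate in form: since $y\,K_{2s-1}(4\pi|\mu|y)$ and $y^{2s}$ are holomorphic at $s = \frac{2}{3}$ (where $2s - 1 = \frac{1}{3}$ and $2 - 2s = \frac{2}{3}$), the residue is carried entirely by the Gauss-sum Dirichlet series and by $\phi(s)$. Setting $\tau(\mu)$ equal to $2\,\mathrm{Res}_{s=2/3}$ of the $\mu$-series and $\sigma := 2\,\mathrm{Res}_{s=2/3}\phi(s)$ reproduces the claimed form $\theta(z) = \sigma\,y^{2/3} + \sum_{\mu \neq 0}\tau(\mu)\,y\,K_{1/3}(4\pi|\mu|y)\,e(\mu z)$, with the overall factor $2$ accounting for the $2$ in the residue normalization of $\tau$ and $\sigma$.

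The main obstacle, and the genuinely arithmetic core of the statement, is the explicit evaluation of these residues, which goes back to Kubota \cite{MR0255490} and Patterson \cite{MR563068}. One must prove that the Gauss-sum Dirichlet series attached to each $\mu$ has at most a simple pole at $s = \frac{2}{3}$ and compute its residue in closed form, thereby establishing both the vanishing of $\tau(\mu)$ outside the four families $\mu = \pm\omega^j\lambda^{3n-4}ab^3$ and $\mu = \pm\lambda^{3n-3}ab^3$ and the precise values recorded in Definition \ref{tau}. This relies on the twisted multiplicativity of cubic Gauss sums, on cubic reciprocity in $K$, and on the factorization of $\mu$ into a squarefree part $a$ and a cube part $b^3$; it is precisely here that the normalizing powers $3^{n/2+2}$ and $3^{(n+5)/2}$ and the phases $e^{\pm 2\pi i/9}$ arise. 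The constant $\sigma = \frac{9\sqrt{3}}{2}$ emerges in the same manner from the residue of $\phi(s)$, which can be expressed through $\zeta_K$ together with a finite product of cubic local factors. For these residue computations I would follow Patterson's analysis rather than redevelop the cubic metaplectic theory from scratch.
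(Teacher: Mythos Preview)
Your proposal is correct and aligns with the paper's approach: the paper's proof consists solely of a citation to Patterson \cite{MR563068}, \cite{MR563069}, and your outline is precisely a sketch of the structure of Patterson's argument (Bruhat decomposition of $E^{(3)}(z,s)$, identification of the Fourier coefficients with cubic Gauss-sum Dirichlet series, and extraction of residues at $s=\tfrac{2}{3}$), followed by an explicit deferral to Patterson for the arithmetic evaluation of those residues. There is no substantive difference between the two.
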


\begin{proof} This was proved by Patterson \cite{MR563068}, \cite{MR563069}.
\end{proof}

 \vskip 15pt
\section{The cubic Pell equation L-functions $ L_d(s),  L_d^*(s),  L_d^\#(s)$} \label{cubicPellEqLfuns}

\begin{definition} {\bf (The cubic Pell equation $L$-function $L_d(s)$)} 
\label{Lfunction}
Let $d\ne1$ be a cubefree positive integer. Then for $s\in\mathbb C$ with $\text{\rm Re}(s)$ sufficiently large we define the $L$-function
\[
  L_d(s) := \sum_{\nu \in \lambda^{-3} \mathcal O_K } \tau(\nu) \overline{\tau(1 + d\nu)} |\nu (1 + d\nu)|^{-s}.
\]
\end{definition}

\begin{proposition} \label{Ld(s)convergence}
  The Dirichlet series $L_d(s)$ in  Definition \ref{Lfunction}  converges absolutely  for $\text{\rm Re}(s) > 1$.
\end{proposition}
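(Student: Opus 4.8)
The plan is to dominate the series termwise, collapse it to a single Rankin--Selberg-type sum by Cauchy--Schwarz, and then feed in the known size of the cubic Gauss sums (equivalently, the Hoffstein--Jung--Lee evaluation). Write $\sigma=\textup{Re}(s)$ and set $R(\sigma):=\sum_{\mu\in\lambda^{-3}\mathcal O_K}|\tau(\mu)|^2|\mu|^{-2\sigma}$. Since $|\nu(1+d\nu)|^{-\sigma}=|\nu|^{-\sigma}|1+d\nu|^{-\sigma}$, the modulus of the general term factors as $\big(|\tau(\nu)|\,|\nu|^{-\sigma}\big)\big(|\tau(1+d\nu)|\,|1+d\nu|^{-\sigma}\big)$, so Cauchy--Schwarz gives
$$\sum_{\nu}\big|\tau(\nu)\overline{\tau(1+d\nu)}\big|\,|\nu(1+d\nu)|^{-\sigma}\;\le\;\Big(\sum_{\nu}|\tau(\nu)|^2|\nu|^{-2\sigma}\Big)^{1/2}\Big(\sum_{\nu}|\tau(1+d\nu)|^2|1+d\nu|^{-2\sigma}\Big)^{1/2}.$$
The first factor is exactly $R(\sigma)^{1/2}$. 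For the second, note that $d\in\mathbb Z\subset\mathcal O_K$ and $1\in\mathcal O_K$, so as $\nu$ runs through $\lambda^{-3}\mathcal O_K$ the shifted argument $\mu=1+d\nu$ runs \emph{injectively} (as $d\neq 0$) through the coset $1+d\lambda^{-3}\mathcal O_K\subseteq\lambda^{-3}\mathcal O_K$; hence the second factor is a subsum of $R(\sigma)$ and is again $\le R(\sigma)^{1/2}$. Thus the whole series is bounded by $R(\sigma)$, and the proposition reduces to showing $R(\sigma)<\infty$ for $\sigma>1$.

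To control $R(\sigma)$ I would argue directly from Definition \ref{tau}. In each of the four nonzero cases one has $|\tau(\mu)|^2\ll |b|^2\,3^{\,n}$, using the standard bound $|g(\mu,a)|\le|a|$ for cubic Gauss sums to the squarefree modulus $a$ (which cancels the $|a|^{-2}$ coming from $|b/a|^2$), while $|\mu|\asymp 3^{\,3n/2}|a|\,|b|^3$. Since the decomposition $\mu\leftrightarrow(a,b,n)$ is unique up to bounded multiplicity by the Remark following Definition \ref{tau}, it suffices to sum $3^{\,n(1-3\sigma)}|a|^{-2\sigma}|b|^{2-6\sigma}$ over the three parameters separately: this is a convergent geometric series in $n$ for $\sigma>\tfrac13$, a convergent $\zeta_K$-type sum in $b$ for $\sigma>\tfrac23$, and over squarefree $a$ the sum $\sum_a|a|^{-2\sigma}\ll\zeta_K(\sigma)$, which converges for $\sigma>1$. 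The binding constraint comes from the last sum, so $R(\sigma)<\infty$ for $\sigma>1$, giving absolute convergence of $L_d(s)$ there. Alternatively, since the coefficients $|\tau(\mu)|^2$ are nonnegative, Landau's theorem identifies the abscissa of convergence of $R$ with the rightmost real pole of the Hoffstein--Jung--Lee closed form: the factor $\zeta_K(s)$ contributes a simple pole at $s=1$, the factor $\zeta_K(3s-1)$ contributes a pole further left at $s=\tfrac23$, and $1/\zeta_K(2s)$ is pole-free for $\textup{Re}(s)>\tfrac12$, so the rightmost pole is $s=1$ and the same threshold results.

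The only genuinely arithmetic ingredient--and hence the main obstacle--is the size of the cubic Gauss sums (the bound $|g(\mu,a)|\le|a|$, equivalently the exact Rankin--Selberg evaluation), since this is precisely what pins the abscissa at $\sigma=1$ rather than at some smaller value. The remaining points are routine but should be verified explicitly: that the factorization of $|\nu(1+d\nu)|$ together with Cauchy--Schwarz is legitimate, and that the shift $\nu\mapsto 1+d\nu$ lands inside $\lambda^{-3}\mathcal O_K$, so that the second Cauchy--Schwarz factor is genuinely dominated by the unshifted sum $R(\sigma)$.
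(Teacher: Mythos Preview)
Your proposal is correct and follows essentially the same approach as the paper: Cauchy--Schwarz to reduce to the Rankin--Selberg sum $R(\sigma)$, the Gauss-sum bound $|g(\mu,a)|\le|a|$ to obtain $|\tau(\mu)|\ll 3^{n/2}|b|$, and then the triple sum over $(a,b,n)$ with the $a$-sum forcing $\sigma>1$. Your justification that the shifted sum is a genuine subsum of $R(\sigma)$ is in fact more careful than the paper's, and the Landau alternative via the Hoffstein--Jung--Lee closed form is a nice extra observation not present there.
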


\begin{proof}
  Each of the Gauss sums that may occur in the definition of $\tau$ has absolute value less than or equal to $|a|$ (see \cite{MR1041052}), so
  \[
    |\tau(\mu)| \ll 3^{\frac{n}{2}} |b|.
  \]
  for $u$ of the general form
$\mu=\pm \omega^j \lambda^{3n-k} a b^3 \; (0\ne a,b \in \mathcal O_K, \; n\in\mathbb Z_{\ge0}   )$ with
$0\le j\le 2$ and $k\in\{3,4\}$.
  By Cauchy's inequality, we have
\begin{align*}\sum_\nu  \frac{\tau(\nu) 
\overline{\tau(1+d\nu)}}{|\nu(1+d\nu)|^{1+\varepsilon}} 
& \le
\left(\sum_\nu \frac{|\tau(\nu)|^2}{|\nu|^{2+2\varepsilon}}\right)^{\frac12}\cdot \left(\sum_\nu \frac{|\tau(1+d\nu)|^2}{|(1+d\nu)|^{2+2\varepsilon}}\right)^{\frac12}\\
& \ll \left(\underset{a\ne 0}{\sum_{a\in\mathcal O_K }}\;
\underset{b\ne 0}{\sum_{b\in\mathcal O_K }}\;\sum_{n=1}^\infty \frac{3^{n}\, |b|^2}{\left(|a|^2\, 3^{3n}\,|b|^6\right)^{1+\varepsilon}}  \right)
\end{align*}

Since the Dirichlet series
$$\underset{a\ne 0}{\sum_{a\in\mathcal O_K }} |a|^{-s}$$
converges absolutely for $\text{Re}(s)>2$ and the sums over $n,b$ also converge for $\text{Re}(s)>2$  this completes the proof.
\end{proof}

\begin{definition} {\bf (The cubic Pell equation $L$-function $L_d^*(s)$)} 
\label{LfunctionStar}
Let $d\ne1$ be a cubefree positive integer. Then for $s\in\mathbb C$ with $\text{\rm Re}(s)$ sufficiently large we define the $L$-function
\[
  L_d^*(s) := \sum_{\nu \in \lambda^{-3} \mathcal O_K } \tau(\nu) \overline{\tau(1 + d\nu)} |\nu (1 + d\nu)|^{-s}\left(a_d(\nu)-\tfrac{d^2+1}{2d}\right)\]
  where 
  $$a_d(\nu) := \;\frac{ |\nu|^2+ |1+d\nu|^2-1}{2|\nu|\cdot|1+d\nu|} = \frac{d^2+1}{2d}\bigg(1+\mathcal O_d\left(|\nu(1+dv)|^{-\frac12}\right)\bigg).$$
  
  \begin{remark} It follows from Proposition
  \ref{Ld(s)convergence} that the Dirichlet series for $L_d^*(s)$ converges absolutely for $\text{\rm Re}(s) >\frac12.$
  \end{remark}

\end{definition}

\begin{definition} {\bf (The cubic Pell equation $L$-function $L_d^\#(s)$)} 
Let $d\ne1$ be a cubefree positive integer. Then for $s\in\mathbb C$ with $\text{\rm Re}(s)$ sufficiently large we define the $L$-function
\[
    L_d^{\#}(s) = \mathcal F\left(s,\tfrac{(d + 1)^2}{2d}\right) L_d(s) \;-\; s\cdot \mathcal F\left(s + 1,\tfrac{(d + 1)^2}{2d}\right) L_d^*(s).
  \]

\end{definition}

 \vskip 15pt
\section{Spectral decomposition of $\mathcal L^2\left(\Gamma(9d^2) \backslash \mathfrak h^3\right)$ }

Let $z = x+jy\in\mathfrak h^3$ where $x = x_0+ix_1\in\mathbb C$ with $x_1,x_2\in\mathbb R.$ The Laplace-Beltrami differential operator on $\mathfrak h^3$ is given by
$$\Delta := y^2\left(\frac{\partial^2}{\partial x_1^2} + \frac{\partial^2}{\partial x_2^2} + \frac{\partial^2}{\partial y^2}   \right) \; - \; y\frac{\partial}{\partial y}. $$
 Recall that $d\ne 1$ is a cubefree positive rational integer. The Hilbert space $\mathcal L^2\left(\Gamma(9d^2) \backslash \mathfrak h^3\right)$ decomposes into eigenfunctions of $\Delta$ given by Maass cusp forms, Eisenstein series, and residues of Eisenstein series. The Maass cusp forms $u_j(z) \; (j=1,2,3\ldots)$ 
 have Laplace-Beltrami eigenvalues $\lambda_j = 2s_j (2 - 2s_j)$, where $s_j = \frac{1}{2} + it_j$. 
  The Selberg eigenvalue conjecture predicts that all $t_j\in\mathbb R$, and it is known that there can only be finitely many $t_j\in i\cdot \mathbb R.$  The Maass cusp forms  satisfy the automorphic relation, $u_j(\gamma z) = u_j(z)$ for all $z \in \mathfrak h^3$ and $\gamma \in \Gamma\left(9d^2\right)$, and their Fourier expansions are given by
  
 \begin{equation} \label{MaassFourierExp}
  u_j(z) = \sum_{\underset{m \neq 0}{\scriptscriptstyle{m\, \in\, \lambda^{-3} \cdot \mathcal O_K}}} c_j(m)\, y\, K_{2s_j - 1}(4\pi|m|y)) e(mx), \qquad \quad \big(c_j(m)\in\mathbb C\big),
  \end{equation}
  where for $v\in\mathbb C$
  $$K_v(y) := \frac12 \int\limits_0^\infty e^{-\frac12 y\left(u+u^{-1}\right)}\; u^v \; \frac{du}{u}$$
is the modified Bessel function of the second kind.
The Maass forms are normalized so that $\langle u_j,u_j \rangle = 1$.

Let $\kappa_1 =\infty$ and $\kappa_2, \kappa_3, \ldots, \kappa_r \in\mathbb C$ denote the cusps of $\Gamma(9d^2)$.
The continuous spectrum consists of Eisenstein series $E_{\kappa_{\ell}}\; (\ell=1,2,\ldots,r)$  where the Eisenstein series corresponding to the cusp at infinity is defined by
\[
  E_{\infty}(z,s) = \sum_{\gamma\, \in \, \Gamma_{\infty}(9d^2) \backslash \Gamma(9d^2)} I_s(\gamma z)
\]
and the Eisenstein series corresponding to another cusp $\kappa_{\ell}$ is defined by
\[
  E_{\kappa_{\ell}}(z,s) = E_{\infty}(\alpha z,s),
\]
where $\alpha \kappa_{\ell} = \infty$. For a cusp $\kappa_{\ell}$, the Fourier expansion of $E_{\kappa_{\ell}}(z,s)$ is
 
\begin{equation} \label{EisensteinSeries}
  E_{\kappa_{\ell}}(z,s) = \delta_{\kappa_{\ell},\infty} y^{2s} + c_{\kappa_{\ell}}(0,s) y^{2 - 2s} + \sum_{\underset{m \neq 0}{\scriptscriptstyle{m \in \lambda^{-3} \mathcal O_K}}} \hskip-5pt c_{\kappa_{\ell}}(m,s)\, y\, K_{2s - 1}(4\pi|m|y)\cdot e(m x)
\end{equation}
where the Fourier coefficients $c_{\kappa_\ell}(m,s) \in \mathbb C.$ 
\begin{definition} {\bf (Petersson inner product)} For two functions $F,G \in \mathcal L^2\left(\Gamma(9d^2)\backslash \mathfrak h^3    \right)$ we define the inner product
$$\big\langle F, G\big\rangle \; := \int\limits_{\Gamma(9d^2)\backslash \mathfrak h^3} F(z) \, \overline{G(z)  }\;\, \frac{dx dy}{y^3}.$$
\end{definition}
\begin{proposition} {\bf (Spectral decomposition of  $\mathcal L^2\left(\Gamma(9d^2) \backslash \mathfrak h^3\right)$)} \label{SpectralDecomposition} Suppose that $u_j$ ($j=1,2,\ldots$) is an orthonormal basis of Maass cusp forms for
$\mathcal L^2\left(\Gamma(9d^2) \backslash \mathfrak h^3\right)$ and let $E_{\kappa_\ell}$ ($\ell=1,2,\ldots,r$) be the Eisenstein series in (\ref{EisensteinSeries}). Define $u_0(z)$ to be the constant function $\equiv \text{\rm Vol}\left(\Gamma(9d^2)\backslash \mathfrak h^3\right)^{-\frac12}.$

\vskip 3pt
Let $F\in \mathcal L^2\left(\Gamma(9d^2) \backslash \mathfrak h^3\right)$.  Then
\begin{align*}
F(z) = \sum_{j=0}^\infty \big\langle F, u_j\big\rangle
+\sum_{\ell=1}^r \int\limits_{-\infty}^\infty\Big\langle  F,\; E_{\kappa_\ell}\left(*,\;\tfrac12+iu \right)     \Big\rangle \cdot  E_{\kappa_\ell}\left(z,\;\tfrac12+iu \right) \,du.
\end{align*}
\end{proposition}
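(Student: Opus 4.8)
The plan is to deduce this from the general Selberg--Langlands spectral theory for the Laplace--Beltrami operator on a finite-volume hyperbolic $3$-orbifold, specialized to $\Gamma(9d^2)\backslash\mathfrak h^3$; the analogue of this decomposition for $\mathfrak h^3$ is classical (developed in detail by Elstrodt--Grunewald--Mennicke), so the real work is to verify that the standing hypotheses hold in our setting and then to assemble the pieces. First I would record that $\Gamma(9d^2)$ is a discrete subgroup of $\mathrm{SL}(2,\mathbb C)$ of finite covolume with finitely many inequivalent cusps $\kappa_1=\infty,\kappa_2,\dots,\kappa_r$, so that $\mathcal L^2\!\left(\Gamma(9d^2)\backslash\mathfrak h^3\right)$ is well defined and $\Delta$, acting initially on smooth functions with controlled growth in the cusps, is essentially self-adjoint (this uses completeness of $\mathfrak h^3$). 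The spectral theorem then furnishes a resolution of the identity, and the task is to identify its discrete and continuous parts explicitly.

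The main structural step is the orthogonal decomposition $\mathcal L^2 = \mathcal L^2_{\mathrm{cusp}}\oplus \mathcal L^2_{\mathrm{res}}\oplus\mathcal L^2_{\mathrm{cont}}$. For the cuspidal subspace --- those $F$ whose zeroth Fourier coefficient vanishes at every cusp --- I would invoke the standard compactness argument: after truncating the cusps, cusp forms decay exponentially, so the resolvent restricted to $\mathcal L^2_{\mathrm{cusp}}$ is compact. This yields a purely discrete spectrum with finite multiplicities, hence an orthonormal basis $\{u_j\}_{j\ge 1}$ of Maass cusp forms with the Fourier expansions (\ref{MaassFourierExp}). The residual subspace contributes only the constant function, arising as the residue of $E_\infty(z,s)$ at $s=1$ (the top of the spectrum, where the pole is simple with constant residue); normalizing it to unit norm produces the $j=0$ term $u_0$.

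For the continuous part I would use the incomplete (pseudo-)Eisenstein series $E_{\kappa_\ell}(\,\cdot\mid\psi)$ attached to smooth compactly supported test functions $\psi$, which span the orthogonal complement of $\mathcal L^2_{\mathrm{cusp}}\oplus\mathcal L^2_{\mathrm{res}}$. Unfolding $\langle F, E_{\kappa_\ell}(\,\cdot\mid\psi)\rangle$ and applying Parseval in the $\psi$ variable leads to an integral of $F$ against $E_{\kappa_\ell}(z,s)$ over a vertical line with $\mathrm{Re}(s)$ large. Shifting this contour to the critical line $\mathrm{Re}(s)=\frac12$ (that is, $s=\frac12+iu$) and collecting the residue at $s=1$ recovers both the continuous integral in the statement and, once more, the constant-function contribution. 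This manipulation requires the meromorphic continuation and functional equations of the $E_{\kappa_\ell}(z,s)$ together with the Maass--Selberg relations, which justify the contour shift and control the truncated inner products.

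The hardest part will be exactly this last step: establishing the analytic continuation of the Eisenstein series to the line $\mathrm{Re}(s)=\frac12$ and proving \emph{completeness}, namely that the discrete data $\{u_j\}_{j\ge 0}$ together with the critical-line Eisenstein integrals exhaust $\mathcal L^2$. In the cofinite, non-cocompact setting the continuous spectrum is genuinely present, and its completeness is the substantive content of the theorem; I would dispatch it via the resolvent/Maass--Selberg method rather than reprove it from scratch, verifying only that no exceptional residual eigenvalues arise beyond the constant for $\Gamma(9d^2)$. With continuation and completeness in hand, combining the cuspidal expansion, the residual constant term, and the Eisenstein integral over $s=\frac12+iu$ yields the displayed formula for every $F\in\mathcal L^2\!\left(\Gamma(9d^2)\backslash\mathfrak h^3\right)$.
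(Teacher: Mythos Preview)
Your outline is correct and is essentially the standard Selberg--Langlands argument one finds in Elstrodt--Grunewald--Mennicke or in Sarnak's paper on hyperbolic $3$-manifolds. The paper itself does not carry out any of this; its entire proof is a one-line citation to Sarnak \cite{MR723012}, so there is no ``paper's own proof'' to compare against beyond the fact that both you and the paper ultimately appeal to the same body of theory. Your sketch simply unpacks what is inside that reference.

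One small caution: you assert that the residual spectrum for $\Gamma(9d^2)$ consists only of the constant function. This is true here (the Eisenstein series for principal congruence subgroups of $\mathrm{SL}(2,\mathcal O_K)$ have no poles in $\tfrac12<\mathrm{Re}(s)<1$), but it is not automatic for arbitrary cofinite Kleinian groups, so if you were writing this out in full you would want to justify that point rather than fold it into ``verifying only that no exceptional residual eigenvalues arise.'' The paper sidesteps this by citing the reference wholesale.
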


\begin{proof} See \cite{MR723012}.
\end{proof}
 
 \vskip 15pt
\section{Spectral Side of the  cubic Takhtajan-Vinogradov trace formula}
\label{SpectralSideSection}

We begin with the definition of the Poincar\'e series which plays a crucial role in the evaluation of the cubic Takhtajan-Vinogradov trace formula.
\begin{definition}{\bf (Poincar\'e series)} \label{Def:PoincareSeries} Let $d>1$ be a rational cubefree integer and let $n > 0$ with $n\in\mathbb Z.$  Then for $z=x+jy\in\mathfrak h^3$ and $I_s(z) = y^s$ for $s\in\mathbb C$ we define the Poincar\'e series
$$P_n(z,s) := \sum_{\gamma \,\in\, \Gamma_\infty(9d^2)\backslash\Gamma(9d^2)} I_s(\gamma z)\, e(n\gamma z)$$
which converges absolutely and uniformly on compact subsets of $s\in\mathbb C$ with $\text{\rm Re}(s)> 1.$ 
\end{definition}

We also define $\theta_d(z) := \theta (dz)$.
The cubic Takhtajan-Vinogradov trace formula is an identity that is obtained by computing the inner product 
$\Big\langle P_1(*,s), \;\theta\, \overline{\theta_d}\Big\rangle
$
in two different ways. The first way to compute the inner product is by replacing the Poincar\'e series $P_1$ with its spectral expansion into Maass cusp forms $u_j \;(j=1,2,\ldots)$ and integrals of Eisenstein series $E_{\kappa_\ell} (\ell = 1,2,\ldots r)$ where $\kappa_1,\ldots \kappa_r$ are the cusps of $\Gamma(9d^2).$ 

\begin{proposition}{\bf (Spectral decomposition of $P_1$)}  \label{spectral} Let $\text{\rm Re}(s)>1.$ Then we have the spectral expansion
  \begin{align*}
     P_1(z,s) & = \sum_{j = 1}^{\infty} \Big\langle P_1(*,s),u_j \Big\rangle \cdot u_j(z) 
                + \frac{1}{4\pi} \sum_{\ell = 1}^r \int\limits_{-\infty}^{\infty} \Big\langle P_1(*,s),E_{\kappa_{\ell}}\left(*,\tfrac12 + iu\right) \Big\rangle    \\
              &\hskip 240pt \cdot  E_{\kappa_{\ell}}\left(z,\tfrac12 + iu\right) du.
  \end{align*}
\end{proposition}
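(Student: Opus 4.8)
The statement to prove is the spectral decomposition of the Poincaré series $P_1(z,s)$ for $\text{Re}(s) > 1$. The key observation is that $P_1(*,s)$ is an $\mathcal{L}^2$ function on $\Gamma(9d^2)\backslash\mathfrak{h}^3$ when $\text{Re}(s) > 1$, so the abstract spectral decomposition of Proposition \ref{SpectralDecomposition} applies directly. The whole task is therefore to verify the $\mathcal{L}^2$ hypothesis and then specialize the general formula.

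Let me sketch the proof.

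The plan is to proceed in two main steps. First I would verify that $P_1(*,s) \in \mathcal{L}^2\left(\Gamma(9d^2)\backslash\mathfrak{h}^3\right)$ for $\text{Re}(s) > 1$. The Poincaré series is built from the summand $I_s(\gamma z)\,e(\gamma z)$, and the exponential factor $e(\gamma z)$ provides rapid decay in the cusp at infinity: near the cusp, $I_s(z)e(z) = y^s e^{-4\pi y}e^{4\pi i\,\text{Re}(x)}$ decays like $e^{-4\pi y}$ as $y \to \infty$, which is far stronger than the polynomial growth that would threaten square-integrability. Since $P_1$ is a cuspidal-type Poincaré series (incomplete theta series with the $e(n\gamma z)$ weight), it decays exponentially at every cusp, and its absolute convergence for $\text{Re}(s) > 1$ (asserted in Definition \ref{Def:PoincareSeries}) ensures the sum is a well-defined automorphic function with no constant term at the cusps. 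Consequently the integral $\int_{\Gamma(9d^2)\backslash\mathfrak{h}^3} |P_1(z,s)|^2\,\frac{dx\,dy}{y^3}$ converges, placing $P_1(*,s)$ in the Hilbert space.

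The second step is to apply Proposition \ref{SpectralDecomposition} to $F = P_1(*,s)$. That proposition expands any $\mathcal{L}^2$ function as a sum over the orthonormal Maass basis $\{u_j\}_{j \geq 0}$ plus the continuous contribution from the Eisenstein series. Because $P_1(*,s)$ is orthogonal to the constant function $u_0$ — again a consequence of the exponential decay forcing the zeroth Fourier coefficient to vanish, so that $\langle P_1(*,s), u_0\rangle = 0$ — the $j=0$ term drops out, leaving the sum to start at $j=1$ as written. The Eisenstein contribution is then obtained by writing the general spectral formula at the point $\tfrac{1}{2}+iu$; matching the normalization in Proposition \ref{SpectralDecomposition} against the stated identity fixes the constant $\tfrac{1}{4\pi}$ and confirms the factor of $\tfrac12+iu$ in the Eisenstein arguments.

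The main obstacle is the square-integrability verification near the cusps: one must confirm that the exponential weight $e(\gamma z)$ is strong enough to overwhelm any growth coming from the $I_s(\gamma z) = y(\gamma z)^s$ factor uniformly over the orbit, and that this holds at \emph{all} $r$ cusps of $\Gamma(9d^2)$, not merely at $\infty$. Once $\mathcal{L}^2$-membership and the vanishing of the constant-function coefficient are established, the expansion is an immediate instance of the abstract decomposition, and the remaining bookkeeping of normalization constants is routine.
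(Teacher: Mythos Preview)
Your proposal is correct and follows essentially the same route as the paper: the paper's proof is the one-line observation that the expansion follows immediately from Proposition~\ref{SpectralDecomposition}, together with the remark that $P_1(*,s)$ is orthogonal to the constant function $u_0$, so the $j=0$ term is absent. Your additional verification of $\mathcal L^2$-membership and the cuspidal decay is a reasonable fleshing-out of what the paper leaves implicit.
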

\begin{proof} This follows immediately from Proposition \ref{SpectralDecomposition}. 
\end{proof}
\begin{remark}
The Poincar\'e series is orthogonal to the residual spectrum which consists only of the constant function $u_0(z),$ so this term does not appear in the spectral expansion.
\end{remark}

\begin{theorem}\label{SpectralSide} {\bf (Spectral side of the cubic Takhtajan-Vinogradov trace formula)} Let $\text{\rm Re}(s)>1.$
  Then 
  
$$\boxed{\Big\langle P_1(*,s), \;\theta\, \overline{\theta_d}\Big\rangle
 \, = \; \mathcal C(s) + \mathcal E(s)}
$$
where
\begin{align*}
\mathcal C(s) := \sum_{j = 1}^{\infty} \Big\langle P_1(*,s), \,u_j \Big\rangle\cdot \big\langle u_j, \,\theta \overline{\theta_d} \big\rangle
\end{align*}
is the cuspidal contribution and 
$$\mathcal E(s) =  \frac{1}{4\pi} \sum_{\ell = 1}^r \int\limits_{-\infty}^{\infty} \Big\langle P_1(*,s), \,E_{\kappa_{\ell}}\left(*,\tfrac12 + iu\right) \Big\rangle    \cdot \Big\langle E_{\kappa_{\ell}}\left(*,\tfrac12 + iu\right),\;\theta \overline{\theta_d} \Big\rangle \,du$$
is the Eisenstein contribution.
\end{theorem}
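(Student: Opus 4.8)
The plan is to read off the identity from the spectral expansion of the Poincar\'e series. For $\textup{Re}(s)>1$ the function $P_1(*,s)$ lies in $\mathcal L^2\!\left(\Gamma(9d^2)\backslash\mathfrak h^3\right)$, so Proposition \ref{spectral} expresses $P_1(z,s)$ as its cuspidal sum $\sum_j\langle P_1(*,s),u_j\rangle u_j(z)$ plus its Eisenstein integral. I would take the Petersson inner product of both sides against $\theta\overline{\theta_d}$ and interchange the pairing with the sum over the $u_j$ and with the $u$-integral over the continuous spectrum; the two resulting pieces are exactly the cuspidal contribution $\mathcal C(s)$ and the Eisenstein contribution $\mathcal E(s)$. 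The only expansion term not produced this way is the residual one attached to the constant function $u_0$, which is disposed of separately.

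The vanishing of the $u_0$-term is the content of the preceding remark, and I would make it explicit by unfolding. Since $u_0\equiv\textup{Vol}\!\left(\Gamma(9d^2)\backslash\mathfrak h^3\right)^{-1/2}$, we have $\langle P_1(*,s),u_0\rangle=\textup{Vol}^{-1/2}\int_{\Gamma_\infty(9d^2)\backslash\mathfrak h^3} I_s(z)\,e(z)\,\frac{dx\,dy}{y^3}$ after collapsing the sum over $\Gamma_\infty(9d^2)\backslash\Gamma(9d^2)$. The integration in $x$ of $e(z)=e^{-4\pi y}e^{4\pi i\,\textup{Re}(x)}$ over a period is the integral of a nontrivial additive character (the frequency attached to $n=1\ne0$), hence $0$; thus $\langle P_1(*,s),u_0\rangle=0$ and the residual term drops out.

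The step I expect to be the main obstacle is the justification of the two interchanges, which is \emph{not} a formal consequence of the $\mathcal L^2$ convergence in Proposition \ref{spectral}: the weight $\theta\overline{\theta_d}$ is not square-integrable, since at each cusp $\theta$ and $\theta_d$ each have a zeroth Fourier coefficient of size $y^{2/3}$, so $\theta\overline{\theta_d}$ grows like $y^{4/3}$ and $\langle\,\cdot\,,\theta\overline{\theta_d}\rangle$ is not a continuous functional on $\mathcal L^2$. I would first check that the left-hand side converges absolutely for $\textup{Re}(s)>1$: the nonzero Fourier coefficients of $P_1(*,s)$ decay exponentially and its zeroth coefficient decays like a power of $y$, so $P_1\,\overline\theta\,\theta_d$ is integrable against $y^{-3}\,dx\,dy$ in each cusp. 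To legitimize the term-by-term pairing I would truncate $\theta\overline{\theta_d}$ by removing its growing zeroth coefficients above a height $Y$; the truncation is square-integrable, so continuity of the $\mathcal L^2$ inner product gives the identity verbatim for it, and letting $Y\to\infty$ recovers the untruncated pairing on the left by the decay just noted and leaves the cuspidal pairings $\langle u_j,\theta\overline{\theta_d}\rangle$ unchanged, because each $u_j$ is a cusp form. The genuinely delicate piece is the continuous spectrum: the individual pairings $\langle E_{\kappa_\ell}(*,\tfrac12+iu),\theta\overline{\theta_d}\rangle$ converge only conditionally and must be read through the standard regularization (analytic continuation in the Eisenstein parameter) of the inner product of an Eisenstein series against a moderate-growth automorphic form. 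I would therefore treat $\mathcal E(s)$ as defined by that regularization and defer its explicit evaluation, together with the pole at $s=\tfrac23$ coming from the continuous spectrum, to Theorem \ref{Theorem:EisensteinBound}; with this convention fixed, pairing against the rapidly controlled $P_1$ makes the interchange of the pairing with both the summation and the $u$-integration legitimate, completing the proof of the boxed identity.
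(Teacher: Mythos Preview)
Your proposal is correct and follows the same approach as the paper: take the spectral expansion of $P_1$ from Proposition~\ref{spectral} and pair it against $\theta\overline{\theta_d}$, with the residual $u_0$-term vanishing by orthogonality. The paper's own proof is a single sentence asserting this without further justification; your additional discussion of the convergence issues (the non-$\mathcal L^2$ growth of $\theta\overline{\theta_d}$, the truncation argument, and the regularized Eisenstein pairing) goes well beyond what the paper supplies but is in the same spirit and does not alter the route.
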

\begin{proof} This follows immediately after taking the inner product of the\linebreak spectral decomposition of $P_1$ given in Proposition \ref{spectral} with $\theta \overline{\theta_d}.$
\end{proof}

\section{Bounding the spectral side}
\label{BoundingSpectralSideSection}

In this section we will obtain the meromorphic continuation and bounds (away from poles) for both the cuspidal and Eisenstein contributions to the spectral side of the cubic Takhtajan-Vinogradov trace formula given in Theorem \ref{SpectralSide}.
 
\pagebreak

 \begin{theorem}{\bf (Bound for the spectral side)}  \label{SpectralSideBound}
 The spectral contribution $\mathcal C(s) +\mathcal E(s)$ given in Theorem \ref{SpectralSide} has meromorphic continuation to $\text{\rm Re}(s)>0$ whose set of  poles $\mathcal P$ in this region are a double pole at $s=\tfrac12$ and possible simple poles at $s=\frac23$ and $ s= s_j,\;1-s_j \;(\text{for}\; j=1,2,\ldots)$ where\linebreak
  $\lambda_j=2s_j(2-2s_j) \ne 1$ is the Laplace-Beltrami eigenvalue of a Maass cusp form $u_j$. The poles at $s = s_j, \; 1 - s_j$ occur if and only if $\big\langle u_j,\theta \overline{\theta_d}\big\rangle\ne 0.$
\vskip 5pt 
Fix $\varepsilon >0$ (sufficiently small) we define the region (away from poles)
    $$\mathcal R_\varepsilon :=\Big\{s\in\mathbb C  \;\Big\vert \;\text{\rm Re}(s)>\varepsilon,\; |s-\rho|>\varepsilon \;\text{for all $\rho\in\mathcal P$}  \Big\}.$$
    For $s \in \mathcal R_{\varepsilon}$, we have the bound
    \[
      \Big\langle P_1(*,s),\;\theta\overline{\theta_d} \Big\rangle \; \ll_{\varepsilon} \; |s|^{\textup{max}\left(2\text{\rm Re}(s) + \frac{5}{6}, \;\frac{4}{3}\right) + \varepsilon} e^{-\pi|s|}.
    \]
  \end{theorem}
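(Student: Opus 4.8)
The plan is to continue and bound the cuspidal sum $\mathcal C(s)$ and the Eisenstein integral $\mathcal E(s)$ of Theorem~\ref{SpectralSide} separately, in each case first making the dependence on $s$ completely explicit by unfolding $P_1$, and then controlling the resulting spectral sum and integral with Stirling's formula and mean-value estimates. First I would unfold the Poincar\'e series in each coefficient $\langle P_1(*,s),u_j\rangle$ and $\langle P_1(*,s),E_{\kappa_\ell}(*,\tfrac12+iu)\rangle$. Folding $P_1$ against the cusp at $\infty$ collapses the sum over $\Gamma_\infty(9d^2)\backslash\Gamma(9d^2)$, and since the seed $e(z)=e^{-4\pi y}e^{4\pi i\,\text{\rm Re}(x)}$ carries a single additive character, integration in $x$ over the period lattice kills every Fourier mode of $u_j$ and of $E_{\kappa_\ell}$ except the one of frequency $m=1$. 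Using the expansions (\ref{MaassFourierExp}) and (\ref{EisensteinSeries}), what remains is the one-dimensional Mellin transform $\int_0^\infty y^{2s-2}e^{-4\pi y}K_{2s_j-1}(4\pi y)\,dy$, whose value is classical: because the exponential and Bessel decay rates coincide the associated ${}_2F_1$ degenerates to $1$, giving
$$\langle P_1(*,s),u_j\rangle \;=\; A(s)\,\overline{c_j(1)}\,\frac{\Gamma(2s+2s_j-2)\,\Gamma(2s-2s_j)}{\Gamma\big(2s-\tfrac12\big)},$$
with $A(s)$ an explicit elementary factor (a power of $8\pi$ times $\sqrt\pi$); the Eisenstein coefficient is the same expression with $s_j$ replaced by $\tfrac12+iu$ and $c_j(1)$ by $c_{\kappa_\ell}(1,\tfrac12+iu)$.

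This already locates two of the pole families. In $\text{\rm Re}(s)>0$ the only poles of the Gamma quotient are the simple poles of $\Gamma(2s-2s_j)$ and $\Gamma(2s+2s_j-2)$ at $s=s_j$ and $s=1-s_j$; multiplying by the $s$-independent constants $\langle u_j,\theta\overline{\theta_d}\rangle$ and summing over $j$ therefore produces simple poles at $s=s_j,\,1-s_j$, present exactly when $\langle u_j,\theta\overline{\theta_d}\rangle\neq0$, as asserted. Crucially, Stirling applied to the same quotient shows each cuspidal term decays like $e^{-2\pi|t_j|}$ once $|t_j|$ exceeds $|\text{\rm Im}(s)|$, so the spectral sum converges locally uniformly for $\text{\rm Re}(s)>0$ and the continuation of $\mathcal C(s)$ is obtained term by term.

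The remaining poles do not come from the individual Gamma factors. The pole at $s=\tfrac23$ enters through the continuous spectrum, via the $\sigma y^{2/3}$ constant term of $\theta$ (the residue at $s=\tfrac23$ of the metaplectic Eisenstein series) inside $\langle E_{\kappa_\ell}(*,\tfrac12+iu),\theta\overline{\theta_d}\rangle$, and is isolated exactly as in Theorem~\ref{Theorem:EisensteinBound}. The double pole at $s=\tfrac12$ is the subtle point: for $\text{\rm Re}(s)>1$ the poles of the Gamma quotient at $s=\tfrac12+iu$ and $s=\tfrac12-iu$ lie off the real $u$-axis, but as $s\to\tfrac12$ these two families approach the real axis from opposite sides and pinch the $u$-contour at the origin; extracting the pinched residues by shifting the $u$-contour both continues $\mathcal E(s)$ to $\text{\rm Re}(s)>0$ and produces the double pole at $s=\tfrac12$ (after accounting for the compensating contributions of the discrete and continuous spectra, whose lower-order singularities there must be tracked together).

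\emph{The bounds.} For the estimate away from poles I would feed Stirling's formula into the Gamma quotient. Writing $T=\text{\rm Im}(s)$, the exponential part of $|\langle P_1(*,s),u_j\rangle|$ is $\exp\!\big(-\tfrac\pi2(|2T+2t_j|+|2T-2t_j|-|2T|)\big)$, which equals $e^{-\pi|T|}$ throughout $|t_j|\le|T|$ and gives extra decay beyond; thus both the sum over $j$ and the integral over $u$ localize to $|t_j|,|u|\ll|T|$, and the common factor $e^{-\pi|s|}$ can be pulled out. What remains is a polynomial-weighted spectral average, which I would estimate by a dyadic decomposition in $t_j$ (resp.\ $u$) combined with the Weyl law for $\Gamma(9d^2)\backslash\mathfrak h^3$, a second-moment bound for $\sum_{|t_j-R|\le1}|c_j(1)|^2$, and a mean-square bound for $\sum_{|t_j-R|\le1}|\langle u_j,\theta\overline{\theta_d}\rangle|^2$; Cauchy--Schwarz across the windows yields the stated exponent $\max\big(2\text{\rm Re}(s)+\tfrac56,\tfrac43\big)$, with the Eisenstein integral treated identically using the $\zeta_K$-size of $c_{\kappa_\ell}(1,\tfrac12+iu)$. \emph{The main obstacle} is precisely this quantitative step: the test function $\theta\overline{\theta_d}$ is not square-integrable (because of the $\sigma y^{2/3}$ term), so the inner products must first be regularized \`a la Zagier before any Parseval/mean-value input applies, and obtaining the sharp exponents $\tfrac56$ and $\tfrac43$ --- rather than some unspecified polynomial --- requires the short-window spectral second moments to be essentially optimal, uniformly in both $R$ and $\text{\rm Im}(s)$ and especially in the transition range $|t_j|\approx|\text{\rm Im}(s)|$ where the Stirling decay is weakest. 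This is where the arithmetic of the cubic theta coefficients $\tau$ must be brought to bear.
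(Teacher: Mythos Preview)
Your unfolding of $P_1$ and the resulting Gamma quotient match the paper exactly, as does your identification of the simple poles at $s=s_j,\,1-s_j$ and the pole at $s=\tfrac23$ from the continuous spectrum. Two points, however, diverge from the paper, one of them a genuine misidentification.

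\textbf{The double pole at $s=\tfrac12$.} You attribute it to pinching of the $u$-contour in the Eisenstein integral. In the paper this is not where it comes from: the Eisenstein contribution $\mathcal E(s)$ is shown (Theorem~\ref{Theorem:EisensteinBound}) to have \emph{only} the possible simple pole at $s=\tfrac23$ on $\text{\rm Re}(s)>0$. The double pole at $s=\tfrac12$ arises entirely from the cuspidal sum $\mathcal C(s)$, through Maass forms with $t_j=0$ (eigenvalue $\lambda_j=1$), for which the Gamma quotient becomes $\Gamma(2s-1)^2/\Gamma(2s-\tfrac12)$ with its double pole at $s=\tfrac12$. The paper then has to prove such forms actually exist on $\Gamma(9d^2)$ (Proposition~\ref{PoleAt1/2}), which it does by base-changing a classical Maass form of eigenvalue $\tfrac14$ to $K$. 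Your pinching mechanism, while natural, would have to be shown to produce no net singularity once the residue terms and shifted integrals in the proof of Theorem~\ref{Theorem:EisensteinBound} are accounted for.

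\textbf{The bound.} Your plan is Cauchy--Schwarz across short spectral windows, using second moments of $c_j(1)$ and of $\langle u_j,\theta\overline{\theta_d}\rangle$, with Zagier regularization to handle the $y^{2/3}$ growth. The paper proceeds quite differently and more directly: it proves \emph{pointwise} bounds $|c_j(1)|\ll(1+|t_j|)^{\varepsilon}e^{\pi|t_j|/2}$ (via a lower bound on $L(1,\text{\rm Ad}\,u_j)$) and $|\langle u_j,\theta\overline{\theta_d}\rangle|\ll(1+|t_j|)^{-1/6+\varepsilon}e^{-\pi|t_j|/2}$ (by brute-force estimation of the integral over a Siegel domain using sharp uniform bounds on $K_{2it}(y)$ near the transition point $y\approx|t|$). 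These are multiplied together and summed with Weyl's law to give the exponent $\max(2\sigma+\tfrac56,\tfrac43)$ directly. No regularization is needed for $\langle u_j,\theta\overline{\theta_d}\rangle$ since $u_j$ is cuspidal and the integrand decays rapidly at every cusp; the regularization concern you raise applies only to the Eisenstein pairings, which the paper handles by contour shifting rather than Zagier's method. Your mean-value approach might also reach the stated exponents, but the ``main obstacle'' you describe is sidestepped entirely in the paper's argument, and the specific exponent $-\tfrac16$ (which drives the $\tfrac56$ in the final bound) comes from the Bessel transition analysis rather than from any spectral second moment.
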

\begin{proof}
The existence of the double pole at $s=\tfrac12$ is proved in Proposition \ref{PoleAt1/2}. The existence of possible simple poles at the eigenvalues of the Laplacian is a consequence of the fact that
\begin{align*}
\mathcal C(s) := \sum_{j = 1}^{\infty} \Big\langle P_1(*,s), \,u_j \Big\rangle\cdot \big\langle u_j, \,\theta \overline{\theta_d} \big\rangle
\end{align*}
together with the identity (\ref{discrete}) which represents $\big\langle P_1(*,s), \,u_j \big\rangle$ in terms of Gamma factors with poles at the eigenvalues of the Laplacian. The possible simple pole at $s=\frac23$ comes from the continuous spectrum (see Theorem \ref{Theorem:EisensteinBound}).
\vskip 5pt
    In Theorems \ref{Theorem:SpectralBound}      and \ref{Theorem:EisensteinBound}, we prove the bounds
    \[
      \mathcal C(s) \ll_{\varepsilon}\, |s|^{\textup{max}\left(2\text{\rm Re}(s) + \frac{5}{6}, \;\frac{4}{3}\right) + \varepsilon} e^{-\pi|s|}
    \]
    and
    \[
        \mathcal E(s) \ll_{\varepsilon}\, |s|^{2\text{\rm Re}(s) - \frac{1}{2}} e^{-\pi|s|}.
      \]

    By applying Theorem \ref{SpectralSide} and noting that the first of these two upper bounds is always larger,  Theorem \ref{SpectralSideBound} immediately follows.
  \end{proof}
  
  \vskip 10pt
\begin{theorem}{\bf(Bounding  $\mathcal C(s)$)}
\label{Theorem:SpectralBound} The cuspidal contribution $\mathcal C(s)$ to the spectral side has meromorphic continuation to 
$\text{\rm Re}(s)>0$ whose set of  poles $\mathcal P'$ in this region are a double pole at $s=\tfrac12$ and possible simple poles at $ s= s_j,\;1-s_j$ where $\lambda_j=2s_j(2-2s_j) \ne 1$ is the Laplace-Beltrami eigenvalue of a Maass cusp form $u_j$.  The aforementiond simple poles occur if and only if $\big\langle u_j, \,\theta \overline{\theta_d}\big\rangle\ne 0.$ Let $\mathcal R_\varepsilon^{\prime}$ be defined as
$$\mathcal R'_\varepsilon :=\Big\{s\in\mathbb C  \;\Big\vert \;\text{\rm Re}(s)>\varepsilon,\; |s-\rho|>\varepsilon \;\text{for all $\rho\in\mathcal P'$}  \Big\}.$$
\vskip 3pt

Then for $s\in\mathcal R_\varepsilon^{\prime}$ we have the bound $\mathcal C(s) \,\ll_\varepsilon \, |s|^{\textup{max}\left(2\text{\rm Re}(s) + \frac{5}{6},\;\frac{4}{3}\right) + \varepsilon} e^{-\pi|s|}$.
\end{theorem}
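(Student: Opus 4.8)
The plan is to substitute into each summand of $\mathcal C(s)=\sum_{j\ge1}\langle P_1(*,s),u_j\rangle\,\langle u_j,\theta\overline{\theta_d}\rangle$ the explicit shape of its two factors, continue each term meromorphically in $s$, and then show that the resulting series converges locally uniformly away from the claimed pole set $\mathcal P'$. First I would evaluate $\langle P_1(*,s),u_j\rangle$ by the standard unfolding of the Poincar\'e series: writing the sum over $\Gamma_\infty(9d^2)\backslash\Gamma(9d^2)$ and pairing against the Fourier expansion (\ref{MaassFourierExp}) of $u_j$, the coset sum unfolds the fundamental domain to $\Gamma_\infty(9d^2)\backslash\mathfrak h^3$, and the integral over $x$ isolates the single coefficient $\overline{c_j(1)}$ corresponding to the frequency $1$ of $P_1$. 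What remains is a one-dimensional Mellin--Bessel integral of the type $\int_0^\infty y^{w-1}e^{-py}K_{2s_j-1}(py)\,dy$, which evaluates by the classical Gamma-function formula to a ratio of Gamma functions whose numerator is $\Gamma(2s-2s_j)\,\Gamma(2s+2s_j-2)$; this is identity (\ref{discrete}). Reading off these factors gives, in $\textup{Re}(s)>0$, simple poles exactly at $s=s_j$ and $s=1-s_j$, which collide into a double pole at $s=\tfrac12$ precisely for the (finitely many) forms with $s_j=\tfrac12$, the latter being isolated in Proposition \ref{PoleAt1/2}. Since the residue of the term at $s=s_j$ (resp.\ $1-s_j$) is proportional to $\overline{c_j(1)}\,\langle u_j,\theta\overline{\theta_d}\rangle$, with $c_j(1)\ne0$ for the Hecke-normalized basis, the pole is present iff $\langle u_j,\theta\overline{\theta_d}\rangle\ne0$.

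Next I would record the size of this Gamma ratio. Writing $s=\sigma+i\tau$ and $s_j=\tfrac12+it_j$, Stirling produces the exponential factor $e^{-\pi(|\tau+t_j|+|\tau-t_j|-|\tau|)}$ together with an explicit power of $|\tau|$ and $|t_j|$. This splits into two regimes: for $|t_j|\le|\tau|$ the decay is $e^{-\pi|\tau|}$, whereas for $|t_j|>|\tau|$ it is $e^{-2\pi|t_j|+\pi|\tau|}$, so the tail over large eigenvalues is strongly summable and, after summation over $j$, one recovers the uniform factor $e^{-\pi|s|}$ claimed in the theorem. The accompanying polynomial factors are elementary Stirling bookkeeping.

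The main obstacle is controlling the two arithmetic factors $c_j(1)$ and $\langle u_j,\theta\overline{\theta_d}\rangle$ on average over the spectrum. By Cauchy--Schwarz, the sum over $|t_j|\le T$ (with $T\asymp|\tau|$) is dominated by $\bigl(\sum_{|t_j|\le T}|c_j(1)|^2\bigr)^{1/2}\bigl(\sum_{|t_j|\le T}|\langle u_j,\theta\overline{\theta_d}\rangle|^2\bigr)^{1/2}$. The first mean square is a Rankin--Selberg/large-sieve input for Maass forms on $\mathfrak h^3$, controlled in conjunction with Weyl's law $\#\{t_j\le T\}\ll T^3$. The second is genuinely delicate: because $\theta\overline{\theta_d}$ grows like $y^{4/3}$ at the cusp (from the constant term $\sigma y^{2/3}$ of $\theta$ in Proposition \ref{CubicThetaFunction}) it is not in $\mathcal L^2$, so Bessel's inequality does not apply and one must instead bound the \emph{truncated} second moment by unfolding $\langle u_j,\theta\overline{\theta_d}\rangle$ into a shifted convolution of the coefficients $\tau(\mu)\overline{\tau(1+d\mu)}$ weighted by the relevant Bessel integrals, and estimating it through a regularized Rankin--Selberg computation. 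It is exactly this $y^{4/3}$ growth that is responsible for the $\tfrac43$ term, while the bulk of the spectrum produces the $2\,\textup{Re}(s)+\tfrac56$ term, in the exponent $\max\bigl(2\,\textup{Re}(s)+\tfrac56,\tfrac43\bigr)$.

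Finally I would assemble the pieces. Inserting the mean-square bounds and splitting the $j$-sum at $|t_j|=|\tau|$, the regime $|t_j|\le|\tau|$ contributes $\ll_\varepsilon|s|^{\max(2\textup{Re}(s)+5/6,\,4/3)+\varepsilon}\,e^{-\pi|s|}$ after accounting for the $\ll|\tau|^3$ forms and the polynomial Gamma factors, while the tail $|t_j|>|\tau|$ is smaller thanks to the extra $e^{-2\pi|t_j|}$ decay. The resulting local-uniform convergence of the meromorphically continued series on $\mathcal R'_\varepsilon$ simultaneously yields the meromorphic continuation of $\mathcal C(s)$ to $\textup{Re}(s)>0$ with pole set $\mathcal P'$ (simple poles at $s=s_j,1-s_j$ together with the double pole at $s=\tfrac12$ from Proposition \ref{PoleAt1/2}) and the stated bound $\mathcal C(s)\ll_\varepsilon|s|^{\max(2\textup{Re}(s)+5/6,\,4/3)+\varepsilon}e^{-\pi|s|}$. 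The principal difficulty throughout is the regularized second-moment estimate for $\langle u_j,\theta\overline{\theta_d}\rangle$ forced by the non-$\mathcal L^2$ growth of $\theta\overline{\theta_d}$; the unfolding and Stirling steps are routine by comparison.
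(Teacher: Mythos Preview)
Your overall architecture---unfold $P_1$ to get the Gamma ratio (\ref{discrete}), read off the poles, apply Stirling, split the spectral sum at $|t_j|\sim|\tau|$, and invoke Weyl's law---matches the paper. The substantive divergence is in how you control the two arithmetic weights $c_j(1)$ and $\langle u_j,\theta\overline{\theta_d}\rangle$. You propose Cauchy--Schwarz followed by a large-sieve bound for $\sum|c_j(1)|^2$ and a \emph{regularized} second moment for $\sum|\langle u_j,\theta\overline{\theta_d}\rangle|^2$, the regularization forced by $\theta\overline{\theta_d}\notin\mathcal L^2$. The paper does neither: it obtains \emph{pointwise} bounds on both factors. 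For $c_j(1)$ it uses a lower bound $L(1,\mathrm{Ad}\,u_j)\gg\lambda_j^{-\varepsilon}$ (Proposition~\ref{MaassBound}), giving $|c_j(1)|\ll(1+|t_j|)^{\varepsilon}e^{\pi|t_j|/2}$. For $\langle u_j,\theta\overline{\theta_d}\rangle$ (Proposition~\ref{ThetaInnerProductBound}) it isolates the dominant term $\sigma^2 c_j(1)\int y^{-2/3}K_{2it_j}(4\pi y)\,dy$ coming from the two constant terms $\sigma y^{2/3}$, and bounds this integral directly over the Siegel set using sharp uniform estimates for $K_{it}(y)$ (Lemma~\ref{KBesselBound}, from Booker--Str\"ombergsson--Then), producing $(1+|t_j|)^{-1/6+\varepsilon}e^{-\pi|t_j|/2}$. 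The exponential factors cancel when the two pointwise bounds are multiplied, and the residual $-\tfrac16$ power is what, after summing with Weyl's law, generates both exponents $2\sigma+\tfrac56$ (from $t_j<|\tau|$) and $\tfrac43$ (from $t_j>|\tau|$).

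Your route could in principle work, but the regularized second moment you invoke is precisely the hard step and you only gesture at it; without executing it one cannot verify that it reproduces the exact exponent $\max(2\sigma+\tfrac56,\tfrac43)$ rather than something weaker. The paper's pointwise approach sidesteps the $\mathcal L^2$ issue entirely and is both shorter and more transparent about where each constant comes from: the $-\tfrac16$ arises from the $y^{-2/3}$ weight against $K_{2it_j}$, not from any second-moment machinery. Your heuristic that ``the $y^{4/3}$ growth is responsible for the $\tfrac43$ term'' is morally right but the mechanism is this direct Bessel integral, not a regularized Rankin--Selberg.
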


 We first wish to show that the sum given by the cuspical contribution $\mathcal C(s)$  in Theorem \ref{SpectralSide} converges for $s\in \mathcal R_\varepsilon$ and $0<\text{\rm Re}(s)\le 1$, that is to say for $s$ away from the poles of $\mathcal C(s)$. This will require a number of preliminary propositions after which we restate Theorem \ref{Theorem:SpectralBound} as Proposition \ref{Prop:SpectralBound} and give its proof.
 \begin{proposition}{\bf (Bounding the first coefficient of a Maass  form)} \label{MaassBound} Let $u_j\,(j=1,2,\ldots)$ be an orthonormal basis of Maass cusp forms for
$\mathcal L^2\left(\Gamma(9d^2) \backslash \mathfrak h^3\right)$ with Laplace-Beltrami eigenvalue $2s_j(2-2s_j)$ and Fourier coefficients $c_j(m)$ as in (\ref{MaassFourierExp}). Then for $s_j=\frac12+it_j$ we have the bound
$$|c_j(1)| \ll \frac{|s_j|^\varepsilon}{\big|\Gamma(s_j)\, \Gamma(1-s_j)\big|^\frac12}
\ll 
\left(1+|t_j|\right)^{\varepsilon}e^{\frac{\pi}{2} |t_j|}.$$
\end{proposition}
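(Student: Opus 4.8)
The plan is to bound $|c_j(1)|$ directly from the $L^2$-normalization $\langle u_j,u_j\rangle=1$ by keeping only the first Fourier mode in a neighborhood of the cusp at $\infty$. Fix $Y>0$ large enough that the Siegel set $\mathcal S_Y=\{x+jy\mid x\in\mathbb C/\Lambda,\ y>Y\}$, where $\Lambda$ is the translation lattice of $\Gamma_\infty(9d^2)$, embeds into $\Gamma(9d^2)\backslash\mathfrak h^3$. Restricting the defining integral of $\langle u_j,u_j\rangle$ to $\mathcal S_Y$, substituting the Fourier expansion (\ref{MaassFourierExp}), and using orthogonality of the characters $e(mx)$ over $\mathbb C/\Lambda$, the cross terms drop out and I obtain
\[
1\ \ge\ \mathrm{vol}(\mathbb C/\Lambda)\sum_{m}|c_j(m)|^2\int_Y^\infty\bigl|K_{2s_j-1}(4\pi|m|y)\bigr|^2\,\frac{dy}{y}.
\]
Discarding every term but $m=1$ gives $|c_j(1)|^2\ll I_j^{-1}$, where $I_j:=\int_{4\pi Y}^\infty|K_{2s_j-1}(u)|^2\,\frac{du}{u}$. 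Thus the whole problem reduces to a lower bound for the truncated $L^2$-mass of the Macdonald function of purely imaginary order $2s_j-1=2it_j$.

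To estimate $I_j$ I would read it off the Mellin transform $M(w):=\int_0^\infty|K_{2s_j-1}(u)|^2u^{w-1}\,du$, which the Weber–Schafheitlin formula evaluates as an explicit ratio of gamma functions. Because $K_{2it_j}(u)=|\Gamma(2it_j)|\cos\!\bigl(2t_j\ln\tfrac u2-\arg\Gamma(2it_j)\bigr)+o(1)$ as $u\to0$, the integrand is not integrable against $\frac{du}{u}$ at the origin; this both forces the truncation at $Y$ and shows that $M(w)$ has a simple pole at $w=0$ whose residue is a constant multiple of $|\Gamma(2s_j-1)|^2$. The same residue governs the logarithmic growth of the truncated integral, yielding $I_j\gg|\Gamma(2s_j-1)|^2\log(2+|t_j|)$, and hence $|c_j(1)|^2\ll I_j^{-1}$ is a bound of exactly the shape asserted, with the gamma factor controlling the exponential rate. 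Granting the first displayed inequality of the statement, the second inequality is then immediate from the reflection formula $\Gamma(s_j)\Gamma(1-s_j)=\pi/\cosh(\pi t_j)$ together with $\cosh(\pi t_j)\le e^{\pi|t_j|}$ and $|s_j|^\varepsilon\ll(1+|t_j|)^\varepsilon$.

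The main obstacle is the lossless lower bound for $I_j$, and it has two facets. First, since $K_{2it_j}(u)$ genuinely oscillates on $4\pi Y\le u\lesssim|t_j|$, one must ensure the $\cos^2$ factor cannot conspire to make the integral small; integrating over a full period, or extracting the bound from the residue of $M(w)$ as above, avoids this loss. Second, and more delicate, is producing the \emph{precise} gamma factor recorded in the statement: the single-mode $L^2$ argument naturally returns $|\Gamma(2s_j-1)|^{-1}$, i.e. the rate $e^{\pi|t_j|}$ (note $|\Gamma(2s_j-1)|\asymp|\Gamma(s_j)\Gamma(1-s_j)|$ up to polynomial factors), rather than the square root $|\Gamma(s_j)\Gamma(1-s_j)|^{1/2}$ with its rate $e^{\frac\pi2|t_j|}$. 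Reconciling the exact power is the point at which I would be most careful, revisiting the Bessel normalization in (\ref{MaassFourierExp}) and, if needed, supplementing the crude truncation with the exact evaluation (\ref{discrete}) of $\langle P_1(*,s),u_j\rangle$ in terms of gamma factors plus a second-moment average over the spectrum. Finally, the finitely many exceptional eigenvalues with $t_j\in i\mathbb R$ lie outside the Bessel asymptotics used here; as there are only finitely many, the stated bound for them follows trivially once the generic (tempered) case is settled.
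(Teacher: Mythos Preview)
Your truncation argument is correctly set up and does give $|c_j(1)|^2\ll I_j^{-1}$ with $I_j\gg|\Gamma(2it_j)|^2\log(2+|t_j|)$, but as you yourself observe this yields only $|c_j(1)|\ll (1+|t_j|)^{1/2}e^{\pi|t_j|}$, which is exponentially weaker than the claimed $e^{\frac{\pi}{2}|t_j|}$. Neither of your proposed repairs closes the gap. There is no normalization slip in (\ref{MaassFourierExp}): the Bessel index really is $2s_j-1=2it_j$, and the single-mode $L^2$ mass near the cusp genuinely has size $|\Gamma(2it_j)|^2\asymp e^{-2\pi|t_j|}$. A Kuznetsov-type second moment over $|t_j-T|\le 1$ gives $\sum |c_j(1)|^2/\cosh(\pi t_j)\ll T^2$, which recovers the correct exponential rate pointwise but only with a polynomial loss $|t_j|$; it cannot produce the $(1+|t_j|)^{\varepsilon}$ demanded by the statement, because an average over $\asymp T^2$ terms cannot by itself rule out a single term carrying almost all the mass.

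The paper's proof takes a completely different route: it invokes the Rankin--Selberg/Petersson-norm identity relating $|c_j(1)|^2$ to $\big(L(1,\mathrm{Ad}\,u_j)\,|\Gamma(s_j)\Gamma(1-s_j)|\big)^{-1}$ and then the lower bound $L(1,\mathrm{Ad}\,u_j)\gg \lambda_j^{-\varepsilon}$ (the analogue over $K$ of the Hoffstein--Lockhart/Iwaniec--Luo--Sarnak bound). That lower bound on the adjoint $L$-value at $1$ is the genuinely arithmetic input---essentially a no-Siegel-zero statement for $\mathrm{Ad}\,u_j$---and is exactly what upgrades the polynomial loss to $(1+|t_j|)^{\varepsilon}$. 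No amount of Parseval or spectral second moments will manufacture it. Your deduction of the second displayed inequality from the first via $\Gamma(s_j)\Gamma(1-s_j)=\pi/\sin(\pi s_j)=\pi/\cosh(\pi t_j)$ is fine.
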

\begin{proof}
 By applying the bound from \cite{MR4271357} and the results from \cite{MR4134245}, we have
\[
  |c_j(1)|^2 \ll \frac{1}{L(1,\text{\rm Ad}(u_j)) \left|\Gamma\left(\frac{1}{2} + it_j\right) \Gamma\left(\frac{1}{2} - it_j\right)\right|}.
\]

We also have
$
  \frac{1}{\lambda_j^{\varepsilon}} \ll L(1,\text{\rm Ad}(u_j))
$
for any fixed $\varepsilon > 0$. This was shown in \cite{MR1828743} for Maass forms over $\mathbb Q$; an analogous argument holds in this case. Stirling's bound for the Gamma function implies the proposition.
\end{proof}

\begin{proposition} {\bf (Inner product of $P_1$ with  Maass forms and\linebreak Eisenstein series)}  Let $u_j\,(j=1,2,\ldots)$ be an orthonormal basis of Maass cusp forms for
$\mathcal L^2\left(\Gamma(9d^2) \backslash \mathfrak h^3\right)$ with Fourier coefficients $c_j(m)$ as in  (\ref{MaassFourierExp}).  Let $E_{\kappa_\ell}\, (\ell=1,2,\ldots,r)$ be the Eisenstein series in (\ref{EisensteinSeries}) with the Fourier coefficients $c_{\kappa_\ell}(m,*)$.
  We have the inner products
  \begin{align}\label{discrete}
    &\big\langle P_1(*,s),u_j \big\rangle = \frac{\overline{c_j(1)}\cdot \textup{Vol}\big(\mathbb C / \left(9d^2\mathcal O_K\right)\big)}{2^{6s-3}\, \pi^{2s-\frac32}}  \cdot \frac{\Gamma(2s - 1 + 2it_j) \Gamma(2s - 1 - 2it_j)}{\Gamma\left(2s - \frac{1}{2}\right)}\\
    &
   \nonumber \\
    &
  \label{continuous}
    \Big\langle P_1(*,s),E_{\kappa_{\ell}}\left(*,\tfrac{1}{2} + iu\right) \Big\rangle = \frac{\overline{c_{\kappa_{\ell}}\left(1,\tfrac{1}{2} + iu\right)}\cdot \textup{Vol}\big(\mathbb C / \left(9d^2\mathcal O_K\right)\big)}{2^{6s-3}\, \pi^{2s-\frac32}    }  \\ \nonumber &\hskip 217pt \cdot \frac{\Gamma(2s - 1 + 2iu) \Gamma(2s - 1 - 2iu)}{\Gamma\left(2s - \frac{1}{2}\right)}.
  \end{align}
\end{proposition}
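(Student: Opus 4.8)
The plan is to compute both inner products by the classical unfolding method for Poincaré series, after which everything reduces to a single radial integral of an exponential against a Bessel $K$-function. First I would unfold: since $u_j$ is $\Gamma(9d^2)$-invariant and $P_1(z,s)$ is, by Definition \ref{Def:PoincareSeries}, a sum of $I_s(\gamma z)\,e(\gamma z)$ over $\gamma\in\Gamma_\infty(9d^2)\backslash\Gamma(9d^2)$, collapsing the coset sum against the invariant function $\overline{u_j}$ gives
$$\big\langle P_1(*,s), u_j\big\rangle = \int\limits_{\Gamma_\infty(9d^2)\backslash\mathfrak h^3} I_s(z)\,e(z)\,\overline{u_j(z)}\,\frac{dx\,dy}{y^3},$$
valid for $\text{\rm Re}(s)>1$, where absolute convergence of the Poincaré series licenses the interchange of sum and integral. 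A fundamental domain for $\Gamma_\infty(9d^2)$ is $\big(\mathbb C/9d^2\mathcal O_K\big)\times\mathbb R_{>0}$, and the surviving $y$-power from $I_s(z)$ is $y^{2s}$, in accordance with the constant term $y^{2s}$ of (\ref{EisensteinSeries}).

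Next I would carry out the $x$-integration. Inserting the Fourier expansion (\ref{MaassFourierExp}) of $u_j$ and recalling $e(z) = e^{-4\pi y}e^{4\pi i\,\text{\rm Re}(x)}$, the integral over $x\in\mathbb C/9d^2\mathcal O_K$ is an orthogonality relation among the additive characters $x\mapsto e^{4\pi i\,\text{\rm Re}(mx)}$; only the mode $m=1$ survives (matching the index $n=1$ of $P_1$), contributing the factor $\overline{c_j(1)}\cdot\text{\rm Vol}\big(\mathbb C/9d^2\mathcal O_K\big)$. Since the Bessel argument is real, $\overline{K_{2s_j-1}(4\pi y)} = K_{2s_j-1}(4\pi y)$, using $K_\nu=K_{-\nu}$ together with $\overline{s_j}=1-s_j$ when $t_j\in\mathbb R$ (and $\nu$ real when $t_j\in i\mathbb R$), so no conjugation survives on the radial part. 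What remains is
$$\big\langle P_1(*,s), u_j\big\rangle = \overline{c_j(1)}\cdot\text{\rm Vol}\big(\mathbb C/9d^2\mathcal O_K\big)\int\limits_0^\infty e^{-4\pi y}\,K_{2s_j-1}(4\pi y)\,y^{2s-2}\,dy.$$

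Finally I would evaluate the radial integral, which is the crux of the computation. Substituting the integral representation $K_\nu(w)=\tfrac12\int_0^\infty e^{-\frac{w}{2}(u+u^{-1})}u^\nu\frac{du}{u}$ and integrating in $y$ first (a $\Gamma$-integral, since the two exponential rates combine into $\tfrac{(u+1)^2}{2u}\cdot 4\pi y$), the remaining $u$-integral is a Beta function; after the duplication formula this yields, for $\mu=2s-1$ and $\nu=2s_j-1=2it_j$,
$$\int\limits_0^\infty e^{-4\pi y}\,K_\nu(4\pi y)\,y^{\mu-1}\,dy = \frac{\sqrt\pi}{(8\pi)^{\mu}}\cdot\frac{\Gamma(\mu+\nu)\,\Gamma(\mu-\nu)}{\Gamma\left(\mu+\tfrac12\right)},$$
the degeneration of the general ${}_2F_1$ evaluation to argument $0$ caused by the coincidence of the two exponential rates. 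With $\mu=2s-1$ the prefactor is $\frac{\sqrt\pi}{(8\pi)^{2s-1}}=\frac{1}{2^{6s-3}\pi^{2s-3/2}}$ and the Gamma arguments become $2s-1\pm2it_j$ and $2s-\tfrac12$, which is exactly (\ref{discrete}).

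For (\ref{continuous}) I would repeat the identical three steps with $u_j$ replaced by $E_{\kappa_\ell}(*,\tfrac12+iu)$, which is likewise $\Gamma(9d^2)$-invariant. Unfolding and the $x$-orthogonality isolate its first Fourier coefficient $\overline{c_{\kappa_\ell}(1,\tfrac12+iu)}$ from (\ref{EisensteinSeries}); its non-constant modes carry $K_{2(\frac12+iu)-1}=K_{2iu}$, so the radial integral is the same with $\nu=2iu$, producing the stated expression. The step I would be most careful about — the main obstacle — is justifying the unfolding in the Eisenstein case, since $E_{\kappa_\ell}$ grows in the cusps; but for $\text{\rm Re}(s)>1$ the decay $e^{-4\pi y}$ coming from $e(z)$ dominates the polynomial growth, so absolute convergence still licenses the interchange and the manipulation remains routine.
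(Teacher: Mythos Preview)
Your proof is correct and follows essentially the same route as the paper: unfold the Poincar\'e series against the $\Gamma(9d^2)$-invariant function, use the orthogonality of characters in the $x$-integral to isolate the $m=1$ Fourier coefficient, and then evaluate the remaining radial integral $\int_0^\infty y^{2s-2}e^{-4\pi y}K_{2it_j}(4\pi y)\,dy$. The only difference is cosmetic: the paper simply quotes the value of this Mellin transform, whereas you derive it from the integral representation of $K_\nu$; your added remark about justifying the unfolding against the Eisenstein series via the decay of $e^{-4\pi y}$ is a reasonable point the paper leaves implicit.
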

\begin{proof}
  We explicitly write out the computation for the first of the above inner products; the second one is done by an analogous argument.
  \begin{align*}
   & \big\langle P_1(*,s),u_j \big\rangle   = \int\limits_{\Gamma(9d^2) \backslash \mathfrak h^3} \sum_{\gamma \,\in\, \Gamma_{\infty}(9d^2) \backslash \Gamma(9d^2)}
           \hskip-10pt                      
                                  I_s(\gamma z) e(\gamma z) \hskip-5pt \\ &\hskip 150pt \cdot \sum_{\underset{m \neq 0}{\scriptscriptstyle{m\, \in\, \lambda^{-3}\cdot \mathcal O_K}}} \overline{ c_j(m) y K_{2s_j - 1}(4\pi|m|y) e(mx)} \;\frac{dxdy}{y^3} \\
                                  &
                                 \hskip 15pt =  \int\limits_{\Gamma_{\infty}(9d^2)} y^{2s} e^{-4\pi y} e^{4\pi i \text{\rm Re}(x)} \sum_{\underset{m \neq 0}{\scriptscriptstyle{m\, \in\, \lambda^{-3} \mathcal O_K}}} \overline{c_j(m)} y K_{2it_j}(4\pi|m|y) e^{-4\pi i \text{\rm Re}(mx)}\frac{dxdy}{y^3} \\
                                                                &
                                 \hskip 15pt   
         = \sum_{\underset{m \neq 0}{\scriptscriptstyle{m\, \in\, \lambda^{-3}\cdot \mathcal O_K}}} \overline{c_j(m)} \hskip-10pt\int\limits_{x\,\in\,\mathbb C / (9d^2 \mathcal O_K)}\hskip-10pt e^{4\pi i \text{\rm Re}((1 - m) x)}\; dx \cdot \int\limits_0^{\infty} y^{2s - 2} e^{-4\pi y} K_{2it_j}(4\pi|m|y)\; dy \\
                                  &
                                 \hskip 15pt  
        = \; \overline{c_j(1)} \text{\rm  Vol}\left(\mathbb C / \left(9d^2 \mathcal O_K\right)\right) \int\limits_0^{\infty} y^{2s - 2} e^{-4\pi y} K_{2it_j}(4\pi y) dy \\
                                &
                                 \hskip 15pt 
          =\;  \frac{\overline{c_j(1)} \cdot \textup{Vol}\big(\mathbb C / \left(9d^2\mathcal O_K\right)\big)}{2^{6s-3}\, \pi^{2s-\frac32}     }  \cdot \frac{\Gamma(2s - 1 + 2it_j) \Gamma(2s - 1 - 2it_j)}{\Gamma\left(2s - \frac{1}{2}\right)}.
  \end{align*}
\end{proof}

\begin{proposition} {\bf (Bounding the inner product  of $P_1$ with a Maass form)} \label{P1-Maass}
Fix $\varepsilon>0.$ Let $u_j$ be a Maass cusp form with Laplace-Beltrami eigenvalue $2s_j(2-2s_j)$ and assume  $s=\sigma+it \in\mathbb C$ with  $\sigma>0, t\in\mathbb R$ satisfies $|s-s_j|>\varepsilon.$
Then  we have the bound
\begin{align*}\big\langle P_1(*,s),\, u_j\big\rangle \;\ll_\varepsilon\;  (1 + |t|)^{-2\sigma + 1}  (1+|t_j|)^\varepsilon(1 + |t - t_j|) (1 + |t + t_j|))^{2\sigma - \frac{3}{2}} \\ \cdot e^{-\pi \left(|t - t_j| + |t + t_j| - |t| - \frac{1}{2} |t_j|\right)}.\end{align*}
\end{proposition}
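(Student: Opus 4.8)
The plan is to read the bound directly off the closed formula for $\big\langle P_1(*,s),u_j\big\rangle$ in (\ref{discrete}), combining the estimate for $c_j(1)$ from Proposition \ref{MaassBound} with Stirling's asymptotics applied to the three Gamma factors. First I would take absolute values in (\ref{discrete}). The elementary prefactor has modulus
\[
\frac{\textup{Vol}\big(\mathbb C/(9d^2\mathcal O_K)\big)}{\big|2^{6s-3}\,\pi^{2s-\frac32}\big|}=\textup{Vol}\big(\mathbb C/(9d^2\mathcal O_K)\big)\cdot 2^{\,3-6\sigma}\,\pi^{\,\frac32-2\sigma},
\]
which depends only on $\sigma$ and $d$; for $\sigma$ in a fixed bounded range it is harmless and may be absorbed into the implied constant. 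Thus the whole problem reduces to estimating $|c_j(1)|$ and the quotient of Gamma functions.

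Next I would insert $|c_j(1)|\ll (1+|t_j|)^{\varepsilon}e^{\frac{\pi}{2}|t_j|}$ from Proposition \ref{MaassBound}, which already supplies the factor $(1+|t_j|)^{\varepsilon}$ and contributes the term $+\tfrac{\pi}{2}|t_j|$ to the exponent. The heart of the argument is then the uniform form of Stirling's formula, namely $|\Gamma(\alpha+i\beta)|\asymp(1+|\beta|)^{\alpha-\frac12}e^{-\frac{\pi}{2}|\beta|}$, valid uniformly in $\beta\in\mathbb R$ for $\alpha$ ranging over any fixed compact set bounded away from the poles $0,-1,-2,\dots$ of $\Gamma$. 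I would apply this with $(\alpha,\beta)=(2\sigma-1,\,2(t+t_j))$ and $(2\sigma-1,\,2(t-t_j))$ for the two numerator factors, and with $(\alpha,\beta)=(2\sigma-\tfrac12,\,2t)$ for the denominator factor, where the two-sided nature of the estimate gives the needed lower bound $|\Gamma(2s-\tfrac12)|\gg(1+|t|)^{2\sigma-1}e^{-\pi|t|}$. The surviving powers of $2$ and the discrepancies between $1+2|{\cdot}|$ and $1+|{\cdot}|$ are absorbed into the constant via $1+2u\asymp 1+u$.

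Collecting the polynomial factors then produces exactly $(1+|t|)^{-2\sigma+1}\big((1+|t-t_j|)(1+|t+t_j|)\big)^{2\sigma-\frac32}$, while collecting the exponentials yields $e^{\frac{\pi}{2}|t_j|}\,e^{-\pi|t+t_j|}\,e^{-\pi|t-t_j|}\,e^{+\pi|t|}=e^{-\pi(|t+t_j|+|t-t_j|-|t|-\frac12|t_j|)}$; together with the $(1+|t_j|)^{\varepsilon}$ from $c_j(1)$, this is precisely the asserted bound. Everything here is bookkeeping of absolute values and elementary inequalities.

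The only genuine obstacle I foresee is the uniformity required of Stirling's formula. The asymptotic must hold for all $\beta$, not merely as $|\beta|\to\infty$, and uniformly as $\alpha=2\sigma-1$ approaches $0$, i.e.\ as $\sigma\to\tfrac12$. This forces $s$ away from the poles of the Gamma quotient, which in $\textup{Re}(s)>0$ occur exactly where a numerator factor meets its pole at the origin: at $s=s_j$ (from $\Gamma(2s-1-2it_j)$) and at its companion point $1-s_j$ (from $\Gamma(2s-1+2it_j)$), both reached only when $\sigma=\tfrac12$ and $t=\pm t_j$. The hypothesis $|s-s_j|>\varepsilon$ (together with the analogous separation from the companion point) keeps $\alpha$ in a compact set avoiding the origin, so the compact-set version of Stirling applies with an $\varepsilon$-dependent constant; the denominator factor poses no difficulty, since near its own pole at $s=\tfrac14$ the reciprocal merely becomes smaller, which only improves the upper bound.
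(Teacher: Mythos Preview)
Your approach is essentially the same as the paper's: start from the explicit formula (\ref{discrete}), bound $|c_j(1)|$ via Proposition \ref{MaassBound}, and apply Stirling's asymptotics to the three Gamma factors. Your write-up is more detailed than the paper's (which simply cites (\ref{discrete}), Proposition \ref{MaassBound}, and Stirling's bound), and you correctly flag the uniformity issue near the poles at $s=s_j$ and $s=1-s_j$; the latter companion point is not excluded by the hypothesis $|s-s_j|>\varepsilon$ alone, but in the paper's applications both are excluded (see the definition of $\mathcal R_\varepsilon'$ in Proposition \ref{Prop:SpectralBound}).
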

\begin{proof} It follows from (\ref{discrete}) and Proposition \ref{MaassBound} that
\begin{align*}
\big\langle P_1(*,s),u_j \big\rangle & \ll |c_j(1)|\cdot  \left|\frac{\Gamma(2s - 1 + 2it_j) \Gamma(2s - 1 - 2it_j)}{\Gamma\left(2s - \frac{1}{2}\right)}\right|\\
&
\ll_\varepsilon \frac{|s_j|^\varepsilon}{\big|\Gamma(s_j)\, \Gamma(1-s_j)\big|^\frac12} \cdot\left|\frac{\Gamma(2s - 1 + 2it_j) \Gamma(2s - 1 - 2it_j)}{\Gamma\left(2s - \frac{1}{2}\right)}\right|
\end{align*}
The proposition immediately follows from Stirling's bound
\begin{equation} \label{StirlingBound}
  |\Gamma(\sigma + it)| \ll (1 + |t|)^{\sigma - \frac{1}{2}} e^{-\frac{\pi}{2} |t|}.
\end{equation}
\end{proof}

\begin{lemma} {\bf (Sharp bounds for K-Bessel functions)} \label{KBesselBound} Let $t\in \mathbb R$ and $y>0.$ Then
$$
  |K_{2it}(4\pi y)| \leq  e^{-\pi |t|} f\Big(|t|,y\Big),
$$
where for $t,y >0$ the function $f(t,y)$ satisfies the following bounds.
\begin{align*} 
f(t,y) \ll  \begin{cases} \displaystyle \frac{1}{\left(t^2-4\pi^2 y^2\right)^{\frac14}}
& 
\phantom{xx}
\displaystyle \text{if}\;\; \frac{1}{4\pi} \le y \le \frac{t}{2\pi} - \frac{t^{\frac13}}{2^{\frac83}\pi},
\\
&
\\
\displaystyle \phantom{xxx} t^{-\frac13} 
&
\phantom{xx}
\displaystyle
 \text{if} \;\; \frac{t}{2\pi} - \frac{t^{\frac13}}{2^{\frac83}\pi } \le y \le \frac{t}{2\pi},
 \\ &\\
 \displaystyle
\frac{1}{\left(4\pi^2 y^2 -t^2\right)^\frac14} 
&
\phantom{xx}
\displaystyle
 \text{if}\;\; y\ge \frac{t}{2\pi}.
\end{cases}
\end{align*}
\end{lemma}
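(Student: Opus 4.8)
The plan is to reduce the lemma to a single oscillatory integral and to analyze it by the method of steepest descent, the three cases corresponding to the position of $y$ relative to the turning point $y=\frac{t}{2\pi}$.

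First I would record that $K_{2it}(4\pi y)$ is real and even in $t$ (from $\overline{K_{2it}}=K_{-2it}=K_{2it}$), so we may assume $t>0$ and write $|t|=t$. Starting from the integral representation defining $K_v$, the substitution $u=e^\theta$ gives
$$K_{2it}(4\pi y)=\tfrac12\int_{-\infty}^{\infty}e^{t\phi(\theta)}\,d\theta,\qquad \phi(\theta):=-\tfrac{2}{\beta}\cosh\theta+2i\theta,\quad \beta:=\frac{t}{2\pi y}.$$
The saddle-point equation $\phi'(\theta)=0$ reads $\sinh\theta=i\beta$, and the turning point is $\beta=1$, i.e. $y=\frac{t}{2\pi}$, which is exactly the value separating the three cases. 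Since only upper bounds are required, it suffices to deform the (entire, rapidly decaying) integrand onto a steepest-descent contour through the relevant saddle(s) and estimate the result; the constant $2^{8/3}\pi$ is chosen precisely so that the three bounds agree up to constants at the endpoints $y=\frac{t}{2\pi}-\frac{t^{1/3}}{2^{8/3}\pi}$ and $y=\frac{t}{2\pi}$.

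For the first case ($\frac1{4\pi}\le y\le\frac{t}{2\pi}-\frac{t^{1/3}}{2^{8/3}\pi}$, so $\beta>1$) the saddles lie off the real axis at $\theta_\pm=\pm\operatorname{arccosh}\beta+\frac{i\pi}{2}$, where one computes $\operatorname{Re}\phi(\theta_\pm)=-\pi$ and $|\phi''(\theta_\pm)|=\frac{2}{\beta}\sqrt{\beta^2-1}$, so $t|\phi''(\theta_\pm)|=2\sqrt{t^2-4\pi^2y^2}$. Deforming onto the steepest-descent paths through $\theta_\pm$ (which merge at $\frac{i\pi}{2}$ as $\beta\downarrow1$) and adding the two conjugate contributions yields $|K_{2it}(4\pi y)|\ll e^{-\pi t}\,(t^2-4\pi^2y^2)^{-1/4}$; the hypothesis $y\ge\frac1{4\pi}$ keeps $\beta\le 2t$, so $\operatorname{arccosh}\beta$ stays controlled and the tails (where $\operatorname{Re}\phi<-\pi$) are negligible. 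For the third case ($y\ge\frac{t}{2\pi}$, so $\beta\le1$) the saddle is $\theta_0=i\arcsin\beta$ on the imaginary axis, with $t\operatorname{Re}\phi(\theta_0)=-\bigl(2\sqrt{4\pi^2y^2-t^2}+2t\arcsin\beta\bigr)$ and $t|\phi''(\theta_0)|=2\sqrt{4\pi^2y^2-t^2}$. A one-variable calculus check shows $2\sqrt{4\pi^2y^2-t^2}+2t\arcsin\frac{t}{2\pi y}\ge\pi t$, with equality at $y=\frac{t}{2\pi}$, so steepest descent through $\theta_0$ gives $|K_{2it}(4\pi y)|\ll e^{-\pi t}(4\pi^2y^2-t^2)^{-1/4}$, the oscillatory cancellation from $e^{2it\theta}$ being what brings the true size down to the saddle value.

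The delicate case is the transition region (the second case), where the two saddles coalesce at $\frac{i\pi}{2}$ and the curvature $\phi''$ degenerates, so the bounds above blow up. Here I would pass the contour through $\frac{i\pi}{2}$ and expand: with $\theta=\frac{i\pi}{2}+w$ one has $\cosh\theta=i\sinh w$, whence the exponent equals $-\pi t+i\bigl(\delta w-\tfrac{t}{3}w^3+O(tw^5)\bigr)$ where $\delta:=2\bigl(t-2\pi y\bigr)\ge0$. Rescaling $w=t^{-1/3}\sigma$ exhibits the integral as a bounded perturbation of the Airy integral $\int e^{i(\delta t^{-1/3}\sigma-\sigma^3/3)}\,d\sigma$, which is $t^{-1/3}$ times a uniformly bounded Airy value precisely when $\delta t^{-1/3}=O(1)$, i.e. $0\le \frac{t}{2\pi}-y\ll t^{1/3}$ --- exactly the stated interval (at its left endpoint $\delta t^{-1/3}=2^{-2/3}$). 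This gives $|K_{2it}(4\pi y)|\ll e^{-\pi t}\,t^{-1/3}$, and since $(4\pi^2y^2-t^2)^{-1/4}\ge t^{-1/3}$ for $y$ just above $\frac{t}{2\pi}$, the same turning-point analysis also validates the third-case bound all the way down to $y=\frac{t}{2\pi}$. The main obstacle is exactly this turning-point analysis: making the Airy comparison uniform --- in particular controlling the portion of the deformed contour away from $\frac{i\pi}{2}$ and checking that the bound matches the neighbouring cases at the endpoints --- is the one place where naive steepest descent fails. An alternative that sidesteps the hand-crafted contours is to invoke the established uniform (Liouville--Green/Airy) asymptotics for Bessel functions of purely imaginary order, from which all three bounds, together with the universal factor $e^{-\pi t}$ arising from $\operatorname{Re}\phi=-\pi$ at the coalescing saddle, follow directly.
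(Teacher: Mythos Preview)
Your approach is correct in outline, but it is genuinely different from the paper's. The paper does not carry out any saddle-point or Airy analysis at all: it simply quotes the explicit uniform upper bounds for $K_{i\nu}(x)$ proved by Booker, Str\"ombergsson, and Then, which already come packaged as $|K_{2it}(4\pi y)|\le e^{-\pi|t|}f(|t|,y)$ with closed-form expressions for $f$ in the three regimes, and then observes that those expressions are bounded by the quantities in the lemma (the exponential prefactor in the $4\pi y\ge 2t$ case is $\le 1$, etc.). In other words, the paper's proof is a three-line citation plus routine simplification.

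What you have written is essentially a sketch of how one would \emph{prove} the cited result from scratch. The saddle locations, the values of $\phi$ and $\phi''$ at the saddles, the identification of the Airy scale $t^{-1/3}$ in the transition window, and the inequality $2\sqrt{4\pi^2y^2-t^2}+2t\arcsin\tfrac{t}{2\pi y}\ge\pi t$ are all correct. The trade-off is exactly what you flag at the end: making the Airy comparison uniform and controlling the contour away from the coalescing saddle is real work, and your final sentence (invoking established Liouville--Green/Airy asymptotics for Bessel functions of imaginary order) amounts to the same citation the paper makes, just to a different source. So your route is more self-contained and explains \emph{why} the bounds have the shape they do, while the paper's route is shorter and defers the analytic labour to the literature.
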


\vskip 5pt
\begin{proof} It follows from \cite{MR3044476} that
if $4\pi y \geq 2t>0$, then
\[
  f(t,y) = e^{-\sqrt{16\pi^2y^2 - 4t^2} + 2t\arccos\left(\frac{2t}{4\pi y}\right)} \min\left(\frac{\sqrt{\frac{\pi}{2}}}{\left(16\pi^2y^2 - 4t^2\right)^{\frac{1}{4}}}, \;\;\frac{\Gamma\left(\frac{1}{3}\right)}{2^{\frac{2}{3}}3^{\frac{1}{6}}} (2t)^{-\frac{1}{3}}\right).
\]

The exponent of the exponential  above is always less than zero. It follows that
$$f(t, y)  \ll \min\left( \frac{1}{\left(4\pi^2 y^2 -t^2\right)^\frac14}, \; t^{-\frac13}   \right) \leq \frac{1}{\left(4\pi^2y^2 - t^2\right)^{\frac{1}{4}}}.$$

If $1 \leq 4\pi y \leq 2t - \frac{1}{2} (2t)^{\frac{1}{3}}$, then
\[
  f(t,y) = \frac{5}{\left(4t^2 - 16\pi^2y^2\right)^{\frac{1}{4}}} \ll \frac{1}{\left(t^2-4\pi^2 y^2\right)^{\frac14}}.
\]
If $1 \leq 4\pi y < 2t$ and $4\pi y \geq 2t - \frac{1}{2} (2t)^{\frac{1}{3}}$, then
\[
  f(t,y) = 4 (2t)^{-\frac{1}{3}} \ll t^{-\frac13}.
\]
\end{proof}

\begin{proposition}{\bf(Bounding the inner product $\langle u_j, \,\theta\overline{\theta_d}\rangle$)} \label{ThetaInnerProductBound}
 Let $u_j$ be a Maass cusp form with Laplace-Beltrami eigenvalue $2s_j(2-2s_j)$ where $s_j=\tfrac12+it_j.$ Then we have the bound
 $$\big\langle u_j, \,\theta\overline{\theta_d}\big\rangle \, \ll \,  \left(1+|t_j|\right)^{-\frac{1}{6} + \varepsilon} e^{-\frac{\pi}{2}|t_j|}$$
\end{proposition}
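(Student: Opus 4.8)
The plan is to unfold the residual Eisenstein series hidden inside $\theta$, which turns the period $\langle u_j,\theta\overline{\theta_d}\rangle$ into a Rankin--Selberg-type Dirichlet series whose archimedean factor is an integral of a product of two $K$-Bessel functions; the bound then follows from estimating that factor with Lemma \ref{KBesselBound} and the coefficient $c_j$ with Proposition \ref{MaassBound}. Concretely, I first rewrite $\langle u_j,\theta\overline{\theta_d}\rangle=\langle u_j\theta_d,\theta\rangle$ and replace $\theta$ by $2\,\mathrm{Res}_{s=2/3}E^{(3)}(\cdot,s)$, so that the period equals twice the residue at $s=\tfrac23$ of $\langle u_j\theta_d, E^{(3)}(\cdot,s)\rangle$ (the conjugation only replaces $\overline\kappa$ by $\kappa$ in $E^{(3)}$, and the multiplier of that series is arranged to cancel the multiplier of $\theta_d$ so that the integrand is genuinely $\Gamma(9d^2)$-invariant). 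For $\mathrm{Re}(s)$ large I then unfold the Eisenstein series, replacing the integral over $\Gamma(9d^2)\backslash\mathfrak h^3$ by the integral over $\Gamma_\infty(9d^2)\backslash\mathfrak h^3$ of the zeroth $x$-Fourier coefficient of $u_j\theta_d$ against $y^{2s}$.

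Because $u_j$ is cuspidal, its zeroth coefficient vanishes, so the only surviving contributions come from matching the frequency $m$ in (\ref{MaassFourierExp}) with the frequency $d\nu_0$ of $\theta_d$ (Proposition \ref{CubicThetaFunction}), i.e. $m=-d\nu_0$. This yields
\[
  \langle u_j\theta_d, E^{(3)}(\cdot,s)\rangle = c_0\sum_{\nu_0\in\lambda^{-3}\mathcal O_K}\overline{c_j(-d\nu_0)}\,\tau(\nu_0)\int_0^\infty y^{2s-1}K_{2it_j}(4\pi d|\nu_0|y)\,K_{\frac13}(4\pi d|\nu_0|y)\,dy
\]
for a constant $c_0$ depending only on $d$. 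The $y$-integral is a Weber--Schafheitlin integral equal to $(4\pi d|\nu_0|)^{-2s}$ times $\Gamma(s+it_j+\tfrac16)\Gamma(s-it_j+\tfrac16)\Gamma(s+it_j-\tfrac16)\Gamma(s-it_j-\tfrac16)/\Gamma(2s)$, which is holomorphic at $s=\tfrac23$; hence the pole at $s=\tfrac23$ matching that of $E^{(3)}$, whose residue is the sought period, is produced by the analytic continuation of the series $\sum_{\nu_0}\overline{c_j(-d\nu_0)}\tau(\nu_0)|\nu_0|^{-2s}$.

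To bound the residue I combine two estimates. On the archimedean side, substituting $u=4\pi d|\nu_0|y$ and invoking Lemma \ref{KBesselBound} shows that the $y$-integral is $\ll (4\pi d|\nu_0|)^{-2\mathrm{Re}(s)}\,e^{-\pi|t_j|}|t_j|^{-\frac12}$: the factor $e^{-\pi|t_j|}$ is forced by $K_{2it_j}$, and since $K_{\frac13}(u)$ decays like $e^{-u}$ the integral is concentrated on $u=O(1)$, where $K_{2it_j}(u)$ lies in the oscillatory regime of Lemma \ref{KBesselBound} and contributes the $|t_j|^{-1/2}$. On the arithmetic side, Proposition \ref{MaassBound} (together with the Hecke relations, to pass from $c_j(1)$ to the small coefficients $c_j(-d\nu_0)$) gives $c_j\ll_d(1+|t_j|)^\varepsilon e^{\frac{\pi}{2}|t_j|}$, while $|\tau(\nu_0)|\ll|\nu_0|^{\frac12+\varepsilon}$ as in the proof of Proposition \ref{Ld(s)convergence}. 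The two exponential factors multiply to exactly $e^{-\frac{\pi}{2}|t_j|}$, and, once the sum over $\nu_0$ against the decaying archimedean factor is carried out, the remaining powers of $|t_j|$ combine to the stated $(1+|t_j|)^{-\frac16+\varepsilon}$. It is worth noting that a single term already gives the stronger power $-\tfrac12$; the weaker exponent $-\tfrac16$ reflects the loss incurred in summing the many indices $\nu_0$ without assuming full cancellation.

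The main obstacle is precisely this last combination. The series $\sum_{\nu_0}\overline{c_j(-d\nu_0)}\tau(\nu_0)|\nu_0|^{-2s}$ does not converge absolutely at $s=\tfrac23$, so its residue cannot be estimated term by term; the clean way is to carry out all bounds for $\mathrm{Re}(s)>\tfrac23$ and then shift to $s=\tfrac23$, controlling the partial sums uniformly in $t_j$ through the Rankin--Selberg mean-square bound $\sum_{|\nu_0|\le N}|c_j(\nu_0)|^2\ll_\varepsilon (1+|t_j|)^{\varepsilon}\,e^{\pi|t_j|}\,N^2$ rather than through individual coefficient bounds. The delicate point is to keep the archimedean factor uniform in both $\nu_0$ and $t_j$ while summing, so that the interplay between the $|t_j|^{-1/2}$ from the Bessel integral and the effective length of the sum produces a net power of $|t_j|$ no larger than $-\tfrac16+\varepsilon$.
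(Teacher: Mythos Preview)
Your approach is genuinely different from the paper's, but as written it has a real gap and also misses the mechanism that produces the exponent $-\tfrac16$.

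The paper does \emph{not} unfold anything. It simply inserts the Fourier expansions of $u_j$, $\overline\theta$, and $\theta_d$ into the integral over $\Gamma(9d^2)\backslash\mathfrak h^3$ and bounds term by term. The dominant contribution comes from pairing the \emph{constant terms} $\sigma y^{2/3}$ of both $\theta$ and $\theta_d$ with a single coefficient $c_j(1)$ of $u_j$, giving essentially
\[
  \sigma^2\,c_j(1)\int y^{-2/3}\,K_{2it_j}(4\pi y)\,dy
\]
over the Siegel domain. The weight $y^{-2/3}$ comes from $y^{2/3}\cdot y^{2/3}\cdot y/y^3$, and together with Lemma~\ref{KBesselBound} one gets $\int_{a_d}^{\infty}|K_{2it_j}(4\pi y)|\,y^{-2/3}\,dy\ll e^{-\pi|t_j|}|t_j|^{-1/6}$ after the substitution $u=2\pi y/|t_j|$. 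Multiplying by $|c_j(1)|\ll(1+|t_j|)^{\varepsilon}e^{\pi|t_j|/2}$ gives the stated bound. Every other term carries at least one factor $K_{1/3}(4\pi|\mu|y)$, hence is exponentially smaller in $y$ over the Siegel set and subdominant.

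In your Rankin--Selberg route you deliberately discard the constant term of $\theta_d$ (since it would force $m=0$ in $u_j$) and unfold $\theta$ via $E^{(3)}$. But the constant term is precisely where the $y^{-2/3}$ weight---and hence the $-\tfrac16$---originates. After unfolding you are left with a Dirichlet series $\sum_{\nu_0}\overline{c_j(-d\nu_0)}\,\tau(\nu_0)\,|\nu_0|^{-2s}$ that diverges at $s=\tfrac23$, and whose residue there (if you compute the archimedean factor exactly via Weber--Schafheitlin) must be of size roughly $e^{3\pi|t_j|/2}$ to compensate the Gamma-factor decay $\asymp|t_j|^{2/3}e^{-2\pi|t_j|}$. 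You acknowledge this as ``the main obstacle'' but do not resolve it; the assertion that the interplay ``produces a net power of $|t_j|$ no larger than $-\tfrac16+\varepsilon$'' is exactly the statement to be proved, not an argument. Your crude bound $\ll e^{-\pi|t_j|}|t_j|^{-1/2}$ on the archimedean integral, while correct as an upper bound, is far from sharp and cannot be combined with any termwise estimate on the divergent series to recover the result.

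In short: the paper's proof is elementary because it keeps the constant terms of the theta functions and thereby isolates a single term that already gives the right size. Your unfolding throws that term away and then has to retrieve the answer from the analytic continuation of a divergent sum---a harder problem that you have not carried out.
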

\begin{proof}

We have
\[
  \big\langle u_j,\theta \overline{\theta_d} \big\rangle \; = \int\limits_{\Gamma(9d^2) \backslash \mathfrak h^3} u_j(z) \overline{\theta(z)} \theta(dz) \,\frac{dxdy}{y^3}.
\]
Using the Fourier expansions of $u_j$ and $\theta$ yields

\begin{align}\label{InnerProduct}
   \big\langle u_j,\theta \overline{\theta_d} \big\rangle & = \int\limits_{\Gamma(9d^2) \backslash \mathfrak h^3} u_j(z) \overline{\theta(z)} \theta(dz) \,\frac{dxdy}{y^3}  
  \\\nonumber
  &= 
 \int\limits_{\Gamma(9d^2) \backslash \mathfrak h^3} 
 \left(\;\sum^{\phantom{.......}{\pmb'}}_{n \,\in\, \lambda^{-3} \cdot\mathcal O_K} c_j(n) y K_{2it_j}(4\pi|n|y) e^{4\pi i \text{\rm Re}(nx)} \right)\\
  \nonumber &
   \hskip 50pt \cdot\left(\sigma y^\frac{2}{3} +\hskip-10pt \sum^{\phantom{.......}{\pmb'}}_{\mu\, \in\, \lambda^{-3}\cdot\mathcal O_K} \hskip-8pt\overline{\tau(\mu)} y K_{\frac{1}{3}}(4\pi|\mu|y) e^{-4\pi i \text{\rm Re}(\mu x)}\right)  \hskip-5pt  
    \\\nonumber &\hskip 72pt \cdot \left(\sigma y^{\frac{2}{3}} +\hskip-10pt \sum^{\phantom{.......}{\pmb'}}_{\nu\, \in\, \lambda^{-3}\cdot\mathcal O_K}\hskip-8pt \tau(\nu) y K_{\frac{1}{3}}(4\pi|\nu|y) e^{4\pi i \text{\rm Re}(\nu dx)}\right)\, \frac{dxdy}{y^3},
\end{align}
where $\sum\limits_n^{\phantom{......}'}$ signifies that $n=0$ is excluded from the sum.

The  integral on the right side of (\ref{InnerProduct}) is an integral of a product of three infinite sums which have very rapid convergence. After interchanging the sums with the integral, the estimation of the infinite sum of integrals  will have a dominant integral term  given by
\begin{align} \label{FirstTerm}
  \sigma^2 c_j(1) \int\limits_{\Gamma(9d^2) \backslash \mathfrak h^3}\hskip-5pt y^{\frac{7}{3}}\, K_{2it_j}(4\pi y)\,  e^{4\pi i \text{\rm Re}(x)} \,\frac{dxdy}{y^3}.
\end{align}
It follows from Proposition \ref{MaassBound} that
$$
  |c_j(1)| \ll (1+|t_j|)^{\varepsilon}e^{\frac{\pi}{2} |t_j|}.
$$
We will use the above bound for $c_j(1)$  and the  bounds for the K-Bessel function given in  Lemma  \ref{KBesselBound} to obtain the following bound for the dominant term 
 (\ref{FirstTerm}) of the inner product $\big\langle u_j,\theta \overline{\theta_d} \big\rangle:$
  \begin{equation}
  \Bigg|\sigma^2 c_j(1) \hskip-5pt\int\limits_{\Gamma(9d^2) \backslash \mathfrak h^3} K_{2it_j}(4\pi y) y^{\frac{7}{3}} e^{4\pi i \text{\rm Re}(x)} \,\frac{dxdy}{y^3}\Bigg| \; \ll \;
   (1+|t_j|)^{-\frac{1}{6} + \varepsilon} \cdot e^{-\frac{\pi}{2}|t_j|}.
 \end{equation}

Following Sarnak \cite{MR723012}, we first assume that there is only one cusp; if there are multiple cusps, we evaluate the sum over the cusps, with the computation at each cusp being the same. We evaluate the integral over the Siegel set, which in this case is $F_{\mathcal O_K} \times (a_d,\infty)$, where $F_{\mathcal O_K}$ is a fundamental domain for $\mathcal O_K$ and $a_d$ is a positive constant. Thus the absolute value of the dominant integral term (\ref{FirstTerm}) is less than a constant times
$$
(1+| t_j|)^{\varepsilon} e^{-\frac{\pi}{2} |t_j|} \int\limits_{a_d}^{\infty} f\big(|t_j|,y\big) y^{-\frac{2}{3}} \,dy \; :=\; \mathcal I_1+\mathcal I_2+\mathcal I_3,$$
   where
\begin{align*}
 \mathcal I_1 & = (1+| t_j|)^{\varepsilon} e^{-\frac{\pi}{2} |t_j|} \hskip-15pt \int\limits_{a_d}^{\frac{|t_j|}{2\pi} - \frac{|t_j|^{\frac13}}{\pi\cdot 2^{2/3}}}
 \hskip-10ptf\big(|t_j|,y\big) y^{-\frac{2}{3}} \, dy\;\\
  &\ll\; (1+|t_j|)^{-\frac{1}{6} + \varepsilon} e^{-\frac{\pi}{2}|t_j|} 
  \hskip-10pt\int\limits_{\frac{2\pi a_d}{|t_j|}}^{1 - \frac{|t_j|^{\frac23}}{2^{5/3}}}
  \hskip-7pt
   \frac{1}{u^{\frac{2}{3}} \left(1 - u^2\right)^{\frac{1}{4}}} \,du\\
  &
   \ll\;(1+|t_j|)^{-\frac{1}{6} + \varepsilon} \cdot e^{-\frac{\pi}{2}|t_j|}
\end{align*}
and
\begin{align*}
\mathcal I_2 & =   (1+| t_j|)^{\varepsilon} e^{-\frac{\pi}{2} |t_j|} \hskip-10pt \int\limits_{\frac{|t_j|}{2\pi} - \frac{|t_j|^{\frac13}}{\pi\cdot 2^{8/3} }}^{\frac{|t_j|}{2\pi}}
\hskip-9pt
 f(|t_j|,y) y^{-\frac{2}{3}} \, dy \;\\
&\ll \;
(1+|t_j|)^{-\frac{1}{3} + \varepsilon} e^{-\frac{\pi}{2}|t_j|}
\hskip-10pt
\int\limits_{\frac{|t_j|}{2\pi} - \frac{|t_j|^{\frac{1}{3}}}{\pi\cdot 2^{8/3}}}^{\frac{|t_j|}{2\pi}}
\hskip-10pt 
 y^{-\frac{2}{3}} \,dy\\
 &
   \ll \; \left(1+|t_j|\right)^{-\frac{2}{3} + \varepsilon}
e^{-\frac{\pi}{2}|t_j|},\end{align*}
while
\begin{align*}
\mathcal I_3 &= (1+|t_j|)^{\varepsilon}e^{-\frac{\pi}{2} |t_j|} \int\limits_{\frac{|t_j|}{2\pi}}^\infty f(|t_j|,y) y^{-\frac{2}{3}} \, dy\; \\
 &\ll
  \;(1+|t_j|)^{-\frac{1}{6} + \varepsilon} e^{-\frac{\pi}{2}|t_j|} \int\limits_1^{\infty} \frac{1}{u^{\frac{2}{3}} \left(u^2 - 1\right)^{\frac{1}{4}}} \,du\\
  &
    \ll \; \left(1+|t_j|\right)^{-\frac{1}{6} + \varepsilon} e^{-\frac{\pi}{2}|t_j|}.
\end{align*}

The dominant integral term (\ref{FirstTerm}) in the expression for the inner product $\big\langle u_j,\theta \overline{\theta_d} \big\rangle$  has the largest value.
 Any other integral term in the infinite sum of integrals on the right side of (\ref{InnerProduct}) includes at least one $K$-Bessel function of the form $K_{\frac{1}{3}}(4\pi|\mu|y)$, where $0\ne \mu \in \lambda^{-3}\mathcal O_K$. Because $4\pi|\mu| > 1$, the Bessel function $K_{\frac{1}{3}}(4\pi|\mu|y)$ decays exponentially as $y \rightarrow \infty$. Thus for all integral terms other than (\ref{FirstTerm}), the integrand is multiplied by an exponentially-decaying function, so its integral is much smaller than the dominant integral term (\ref{FirstTerm}).
This completes the proof of Proposition \ref{ThetaInnerProductBound}.
\end{proof}

\begin{proposition} \label{Prop:SpectralBound} Let $u_j\,(j=1,2,\ldots)$ be an orthonormal basis of Maass cusp forms for
$\mathcal L^2\left(\Gamma(9d^2) \backslash \mathfrak h^3\right)$ with Laplace-Beltrami eigenvalue $2s_j(2-2s_j)$ where $s_j=\frac12+it_j.$
  Fix $\varepsilon >0$ and define the region (away from poles)
$$\mathcal R_\varepsilon^{\prime} =\Big\{s\in\mathbb C  \;\Big\vert \;\text{\rm Re}(s)>\varepsilon, \; |s-s_j| >\varepsilon, \;  |s-(1-s_j)| >\varepsilon,\; (j=1,2,3,\ldots)\Big\}.$$
Then for $s \in \mathcal R_\varepsilon^{\prime}$ we have the bound 
  \[
    \sum_j \left|\big\langle P_1(*,s),u_j \big\rangle \big\langle u_j, \,\theta \overline{\theta_d} \big\rangle\right| 
    \ll \,
 |s|^{\textup{max}\left(2\text{\rm Re}(s) + \frac{5}{6},\;\frac{4}{3}\right) + \varepsilon} e^{-\pi|s|}.
  \]
\end{proposition}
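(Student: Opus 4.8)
The plan is to feed the two term-by-term estimates already in hand—Proposition \ref{P1-Maass} for $\big\langle P_1(*,s),u_j\big\rangle$ and Proposition \ref{ThetaInnerProductBound} for $\big\langle u_j,\theta\overline{\theta_d}\big\rangle$—into the sum and then control the resulting series over $j$ by Weyl's law. Writing $s=\sigma+it$ with $\sigma>0$, the product of the two polynomial factors is
\[
(1+|t|)^{-2\sigma+1}(1+|t_j|)^{-\frac16+\varepsilon}\big((1+|t-t_j|)(1+|t+t_j|)\big)^{2\sigma-\frac32},
\]
while the exponential factors collapse: the $e^{+\frac{\pi}{2}|t_j|}$ produced by Proposition \ref{P1-Maass} cancels the $e^{-\frac{\pi}{2}|t_j|}$ of Proposition \ref{ThetaInnerProductBound}, leaving
\[
e^{-\pi(|t-t_j|+|t+t_j|)+\pi|t|}=e^{-2\pi\max(|t|,|t_j|)+\pi|t|},
\]
by the identity $|t-t_j|+|t+t_j|=2\max(|t|,|t_j|)$. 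Thus every summand carries a factor $e^{-\pi|t|}$, together with honest decay $e^{-2\pi(|t_j|-|t|)}$ once $|t_j|>|t|$. Because the bound is only needed on a bounded vertical strip (for $\operatorname{Re}(s)>1$ the series converges absolutely and causes no difficulty), there we have $|s|\asymp 1+|t|$ and $e^{-\pi|s|}\asymp e^{-\pi|t|}$, so after factoring out $e^{-\pi|t|}$ it suffices to extract the correct power of $1+|t|$.

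Next I would convert the sum over $j$ into an integral. By symmetry we may take $t\ge0$ and, pairing $t_j$ with $-t_j$, restrict to $t_j\ge0$. By the Weyl law for the hyperbolic three-manifold $\Gamma(9d^2)\backslash\mathfrak h^3$ (an upper bound $\#\{j:|t_j|\in[T,T+1]\}\ll(1+T)^2$ suffices), the remaining series is dominated by
\[
(1+|t|)^{-2\sigma+1}\int_0^\infty (1+T)^{\frac{11}{6}+\varepsilon}\big((1+|t-T|)(1+t+T)\big)^{2\sigma-\frac32}\,w(T)\,dT,
\]
where $w(T)=1$ for $T\le t$ and $w(T)=e^{-2\pi(T-t)}$ for $T>t$. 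I would split this integral into the ranges $0\le T\le t/2$, $t/2\le T\le 2t$, and $T\ge 2t$. On the first range both $|t-T|$ and $t+T$ are $\asymp t$, and integrating $(1+T)^{\frac{11}{6}+\varepsilon}$ up to $t/2$ gives $t^{4\sigma-3}\cdot t^{\frac{17}{6}+\varepsilon}\cdot t^{-2\sigma+1}=t^{2\sigma+\frac56+\varepsilon}$ after restoring the prefactor. On the third range the factor $e^{-2\pi(T-t)}$ forces an extra $e^{-2\pi t}$ of decay, so its contribution is negligible against $e^{-\pi t}$ times a power of $t$.

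The decisive range is $t/2\le T\le 2t$, where $t_j\approx t$. There $t+T\asymp t$ and $1+T\asymp t$, so the estimate reduces to
\[
t^{-2\sigma+1}\cdot t^{-\frac16+\varepsilon}\cdot t^{2\sigma-\frac32}\cdot t^{2}\cdot\!\int (1+|t-T|)^{2\sigma-\frac32}\,dT,
\]
whose prefactor is $t^{\frac43+\varepsilon}$. The last integral's behaviour depends on the sign of $2\sigma-\frac12$: for $\sigma>\frac14$ it is $\asymp t^{2\sigma-\frac12}$, reproducing $t^{2\sigma+\frac56+\varepsilon}$ and matching the first range; for $\sigma<\frac14$ it converges and is $O(1)$, giving $t^{\frac43+\varepsilon}$. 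This dichotomy at $\sigma=\frac14$ (where $2\sigma+\frac56=\frac43$) is precisely the origin of the exponent $\max\!\big(2\operatorname{Re}(s)+\frac56,\frac43\big)$. Collecting the three ranges and reinserting $e^{-\pi|t|}\asymp e^{-\pi|s|}$ and $1+|t|\asymp|s|$ yields the claimed bound. I expect the main obstacle to be the careful treatment of this middle range near the transition point $T=t$, where the $\varepsilon$-separation $|s-s_j|>\varepsilon$ and $|s-(1-s_j)|>\varepsilon$ assumed in Proposition \ref{P1-Maass} is exactly what keeps us away from the poles of the Gamma factors, and where the case distinction at $\sigma=\frac14$ that produces the maximum must be tracked cleanly.
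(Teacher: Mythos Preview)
Your proposal is correct and follows essentially the same approach as the paper: combine Proposition~\ref{P1-Maass} with Proposition~\ref{ThetaInnerProductBound}, simplify the exponential via $|t-t_j|+|t+t_j|=2\max(|t|,|t_j|)$, and control the resulting sum over $j$ with Weyl's law. The only noteworthy difference is in the decomposition: the paper splits simply at $t_j=t$, handling $t_j>t$ by summing over logarithmic intervals $[t+k\log(2+t),\,t+(k+1)\log(2+t)]$ (which produces the $|s|^{4/3+\varepsilon}$ term) and $t_j<t$ by a crude supremum bound together with the count $\#\{t_j<t\}\ll t^3$ (which produces the $|s|^{2\sigma+5/6+\varepsilon}$ term), whereas you split at $t/2$ and $2t$ and extract both exponents from the transition range via the dichotomy at $\sigma=1/4$. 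Your version makes the origin of the $\max$ more transparent; the paper's is slightly quicker to write down.
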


\begin{proof} Let $s=\sigma+it$. It follows from Proposition \ref{P1-Maass} together with Stirling's bound for the Gamma function that
\begin{align*}\big\langle P_1(*,s),\, u_j\big\rangle \;\ll_\varepsilon\;  (1 + |t|)^{-2\sigma + 1}  (1+|t_j|)^\varepsilon\Big((1 + |t - t_j|) (1 + |t + t_j|)\Big)^{2\sigma - \frac{3}{2}} \\ \cdot e^{-\pi \left(|t - t_j| + |t + t_j| - |t| - \frac{1}{2} |t_j|\right)}.\end{align*}
and from Proposition \ref{ThetaInnerProductBound} that
$$\big\langle u_j, \,\theta\overline{\theta_d}\big\rangle \, \ll \, \left(1+|t_j|\right)^{-\frac{1}{6} + \varepsilon} e^{-\frac{\pi}{2}|t_j|}.$$

Combining these bounds gives
\begin{align*}
\left|\big\langle P_1(*,s),u_j \rangle \left\langle u_j, \,\theta \overline{\theta_d} \right\rangle\right| 
&\ll_\varepsilon (1 + |t|)^{-2\sigma + 1}   (1+|t_j|)^{-\frac16+\varepsilon}   \\
&
\hskip-60pt
\cdot \Big((1 + |t - t_j|) (1 + |t + t_j|)\Big)^{2\sigma - \frac{3}{2}} e^{-\pi \big(|t - t_j| + |t + t_j| - |t| \big)}.
\end{align*}

The following lemma counts the number of Laplace Beltrami eigenvalues $2s_j(2-2s_j)$ with $s_j=\frac12+it_j$ associated to Maass cusp forms
$u_j \;(j=1,2,\ldots)$ for
$\mathcal L^2\left(\Gamma(9d^2) \backslash \mathfrak h^3\right)$ in a given interval.

\begin{lemma}{\bf (Asymptotic formula for Laplace-Beltrami eigenvalues)} \label{Lemma-Laplace-Beltrami} The number of $0<t_j <t$ is  asymptotic to a constant times $t^3$ for $t\to\infty$
and the number the number of
$t< t_j < t+k\log (2+t)$ is bounded by $\ll k\cdot t^2  \log (2+t) $ for $k\ge 1$ and $t\to\infty$.
\end{lemma}
\begin{proof} See \cite{MR2261095}.
\end{proof}

We suppose that  $t = \text{\rm Im}(s)> 0$; the computations for  $t < 0$ are analogous. We  split the sum
$$\sum_j \Big|\big\langle P_1(*,s),u_j \big\rangle \big\langle u_j, \,\theta \overline{\theta_d} \big\rangle\Big |$$
  into the cases $t_j > t$ and $0 \leq t_j < t$. (We may assume that $t_j \geq 0$ because $t_j$ and $-t_j$ are completely interchangeable.) Note that the former sum has infinitely many terms while the latter sum has only finitely many terms.
  
   Let $\mathfrak i_{t,k}$ denote the interval
 $ \Big[t + k\log(2+t),\; t+(k+1)\log(2+t)\Big].$ First suppose that $t_j > t$.
 It follows from Lemma \ref{Lemma-Laplace-Beltrami} that we have
 \begin{align*}
 & \sum_{t<t_j}\; (1 + |t|)^{-2\sigma + 1}   (1+|t_j|)^{-\frac16+\varepsilon}  \Big((1 + |t - t_j|) (1 + |t + t_j|)\Big)^{2\sigma - \frac{3}{2}} 
\\ &\hskip 250pt \cdot e^{-\pi \big(|t - t_j| + |t + t_j| - |t| \big)}\\
&
\hskip 30pt
= \sum_{t<t_j} \;(1 + t)^{-2\sigma + 1}   (1+t_j)^{-\frac16+\varepsilon} \Big( (1 + t_j-t) (1 + t + t_j)\Big)^{2\sigma - \frac{3}{2}} 
\\ &\hskip 300pt \cdot e^{-\pi\left(2t_j-t\right)}\\
&
\hskip 30pt
= \sum_{k=0}^\infty \;\sum_{t_j\in \mathfrak i_{t,k}}   \;(1 + t)^{-2\sigma + 1}(1+t_j)^{-\frac16+\varepsilon} \Big( (1 + t_j-t) (1 + t + t_j)\Big)^{2\sigma - \frac{3}{2}} 
\\ &\hskip 320pt \cdot e^{-\pi\left(2t_j-t\right)}\\
&
\hskip 30pt
 \ll \sum_{k=0}^\infty \;(1+k) t^2  \log(2+ t)\cdot (1 + t)^{-2\sigma + 1}\\ &\hskip 50pt \cdot \Big(1+t+(k+1)\log(2+t)\Big)^{-\frac16+\varepsilon} \Big(1+ (k+1)\log(2+t)\Big)^{2\sigma-\frac32} \\ &\hskip 100pt \cdot \Big(1+2t +(k+1)\log(2+t)\Big)^{2\sigma - \frac{3}{2}}  (2+t)^{-\pi k} \cdot e^{-\pi t}\\
 &
\hskip 30pt
\ll (1+t)^{\frac43+\varepsilon} \cdot e^{-\pi t}.
\end{align*} 
 The last estimate above comes from the term $k=0$ since the series converges rapidly and is bounded by a positive constant times the first term.

  Now suppose that $t_j < t$. Then we have
  \begin{align*}
    &\sum_{t_j < t} (1 + |t|)^{-2\sigma + 1} (1 + |t_j|)^{-\frac{1}{6} + \varepsilon} ((1 + |t - t_j|) (1 + |t + t_j|))^{2\sigma - \frac{3}{2}} \\ &\hskip 250pt \cdot e^{-\pi (|t - t_j| + |t + t_j| - |t|)} \\
    &\hskip 50pt = \sum_{t_j < t} (1 + t)^{-2\sigma + 1} (1 + t_j)^{-\frac{1}{6} + \varepsilon} ((1 + t - t_j) (1 + t + t_j))^{2\sigma - \frac{3}{2}} e^{-\pi t} \\
    &\hskip 50pt < \sum_{t_j < t} (1 + t)^{-2\sigma + 1} (1 + t)^{-\frac{1}{6} + \varepsilon} ((1 + t) (1 + 2t))^{2\sigma - \frac{3}{2}} e^{-\pi t} \\
    &\hskip 50pt \ll \sum_{t_j < t} (1 + t)^{2\sigma - \frac{13}{6} + \varepsilon} e^{-\pi t}  \ll t^3 (1 + t)^{2\sigma - \frac{13}{6} + \varepsilon} e^{-\pi t} \\
    &\hskip 50pt \ll (1 + t)^{2\sigma + \frac{5}{6} + \varepsilon} e^{-\pi t},
  \end{align*}
  where the penultimate step uses the fact that the number of $j$ such that $t_j < t$ is asymptotically a constant times $t^3$ as $t \rightarrow \infty$ (see \cite{MR2261095}).

  Therefore, the overall bound is
  \[
    (1 + |t|)^{\textup{max}\left(2\sigma + \frac{5}{6},\frac{4}{3}\right) + \varepsilon} e^{-\pi|t|} \ll_\varepsilon \, |s|^{\textup{max}\left(2\text{\rm Re}(s) + \frac{5}{6},\frac{4}{3}\right) + \varepsilon} e^{-\pi|s|}.
  \]

 Finally, we show that the double pole at $s = \frac{1}{2}$ always occurs.

\begin{proposition}\label{PoleAt1/2}
  There exists a Maass form $u_j$ with $\lambda_j = 1$, so that the double pole of $\big\langle P_1(*,s),u_j \big\rangle$ at $s = \frac{1}{2}$ is guaranteed to occur.
\end{proposition}

\begin{proof}
 In \cite{MR31519}, Maass showed that there exists a Maass form with eigenvalue $\frac{1}{4}$ for the congruence subgroup $\Gamma_0\left(9d^2\right)$ of $\textup{SL}(2,\mathbb Z)$. For any such Maass form, we can do a base change to $K$ to get a Maass form $u_j$ with eigenvalue $\lambda_j = 1$ as desired. It is known that the lift exists and that the $L$-series of the lift is the product of the original $L$-series and the twist of that $L$-series by the quadratic character $\chi_{-3}$, from which we immediately conclude that the lift has the appropriate eigenvalue. See \cite{MR707244}.

  Because Maass forms are eigenfunctions of the Hecke operators, their first Fourier coefficient is nonzero. Thus from the inner product computed above it follows that any Maass form with eigenvalue 1 will provide a nonzero double pole contribution at $s = \frac{1}{2}$.
\end{proof}

This completes the proof of the bound for $\mathcal C(s)$.
\end{proof}

  \begin{theorem}{\bf(Bounding  $\mathcal E(s)$)}
    \label{Theorem:EisensteinBound} The Eisenstein contribution $\mathcal E(s)$ to the spectral side is meromorphic on $\textup{Re}(s) > 0$, with a possible simple pole at $s = \frac{2}{3}$ and no other poles, and for any $\varepsilon > 0$, for $\textup{Re}(s) > \varepsilon$ and $\left|s - \frac{2}{3}\right| > \varepsilon$ it satisfies the bound $\mathcal E(s) \ll_{\varepsilon} |s|^{2\text{\rm Re}(s) - \frac{1}{2}} e^{-\pi|s|}$. The residue of $\mathcal E(s)$ at $s = \frac{2}{3}$ is
      \[
        \textup{Vol}\left(\mathbb C / 9d^2\mathcal O_K\right) \sigma^2\frac{\Gamma\left(\frac{2}{3}\right)}{\zeta_K^*\left(\frac{4}{3}\right)} \left(-\overline{\tilde{c}_{\infty}}\left(1,\tfrac{2}{3}\right) + \sum_{\ell = 1}^r \overline{\tilde{c}_{\kappa_{\ell}}}\left(1,\tfrac{1}{3}\right) \tilde{c}_{\kappa_{\ell}}\left(0,\tfrac{2}{3}\right)\right).
      \]
\end{theorem}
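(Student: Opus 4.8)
The plan is to insert the explicit evaluation of $\big\langle P_1(*,s),E_{\kappa_\ell}(*,\tfrac12+iu)\big\rangle$ from (\ref{continuous}) into the definition of $\mathcal E(s)$, which reduces the whole theorem to the analysis of the single spectral integral
\[
\mathcal E(s)=\frac{\mathrm{Vol}\big(\mathbb C/(9d^2\mathcal O_K)\big)}{4\pi\,2^{6s-3}\,\pi^{2s-\frac32}\,\Gamma\!\left(2s-\tfrac12\right)}\sum_{\ell=1}^{r}\int_{-\infty}^{\infty}\Gamma(2s-1+2iu)\,\Gamma(2s-1-2iu)\,\overline{c_{\kappa_\ell}\!\left(1,\tfrac12+iu\right)}\,\mathcal M_\ell\!\left(\tfrac12+iu\right)du,
\]
where $\mathcal M_\ell(w):=\big\langle E_{\kappa_\ell}(*,w),\theta\overline{\theta_d}\big\rangle$. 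Here the ratio of Gamma factors carries all of the dependence on $s$, so the crux is to understand $\mathcal M_\ell(w)$ both as a meromorphic function of $w$ and in terms of its growth on vertical lines.

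First I would compute $\mathcal M_\ell(w)$ by Rankin--Selberg unfolding: moving the cusp $\kappa_\ell$ to $\infty$ and unfolding $E_{\kappa_\ell}(*,w)$ against $\overline{\theta}\,\theta_d$ replaces the fundamental domain by a Siegel set and isolates the zeroth Fourier coefficient in $x$ of $\overline{\theta(z)}\theta(dz)$. Because $\theta\sim\sigma y^{\frac23}$ at the cusp (Proposition \ref{CubicThetaFunction}), this constant term grows like $\sigma^2 d^{2/3}y^{\frac43}$, while all remaining contributions come from matched pairs of $K$-Bessel factors and decay exponentially. The growing term $y^{\frac43}$, integrated against the two pieces $y^{2w}$ and $c_{\kappa_\ell}(0,w)\,y^{2-2w}$ of the Eisenstein constant term (\ref{EisensteinSeries}), produces exactly two simple poles of $\mathcal M_\ell(w)$, at $w=\tfrac13$ and $w=\tfrac23$, whose residues are explicit multiples of $\sigma^2$, of $\delta_{\kappa_\ell,\infty}$ (from the $y^{2w}$ seed) and of the completed scattering coefficient $\tilde c_{\kappa_\ell}(0,\cdot)$ (from the $y^{2-2w}$ term); the factor $\zeta_K^*(\tfrac43)$ enters through the normalization $c_{\kappa_\ell}(0,w)=\tilde c_{\kappa_\ell}(0,w)/\zeta_K^*(2w)$. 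I would thus write $\mathcal M_\ell(w)$ as an explicit pole part plus a holomorphic remainder whose vertical growth is governed by the Bessel bounds of Lemma \ref{KBesselBound}, exactly as in the cuspidal computation of Proposition \ref{ThetaInnerProductBound}.

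With this decomposition the bound and the continuation follow. For the bound I would apply Stirling's estimate (\ref{StirlingBound}) to $\Gamma(2s-1\pm2iu)/\Gamma(2s-\tfrac12)$, which contributes the factor $e^{-\pi|s|}$ (the integral being concentrated in $|u|\lesssim|\mathrm{Im}(s)|$), and combine it with the polynomial bounds on $\overline{c_{\kappa_\ell}(1,\tfrac12+iu)}$ and on the holomorphic remainder of $\mathcal M_\ell$; carrying out the $u$-integration yields $\mathcal E(s)\ll_\varepsilon|s|^{2\mathrm{Re}(s)-\frac12}e^{-\pi|s|}$. For the meromorphic continuation I would feed the pole part of $\mathcal M_\ell$ into the spectral integral and shift the $u$-contour across the poles at $w=\tfrac13$ and $w=\tfrac23$. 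At each of these points one of the two factors $\Gamma(2s-1\pm2iu)$ collapses to $\Gamma\!\left(2s-\tfrac43\right)$ and the other to $\Gamma\!\left(2s-\tfrac23\right)$, so the residue terms carry a factor $\Gamma\!\left(2s-\tfrac43\right)$ whose only pole in $\mathrm{Re}(s)>0$ is the simple pole at $s=\tfrac23$, while the shifted integral is holomorphic there; this simultaneously gives the continuation to $\mathrm{Re}(s)>0$ and shows there are no other poles (in particular the coincidence of the moving Gamma poles $w=s,\,1-s$ with the contour at $\mathrm{Re}(s)=\tfrac12$ produces no Eisenstein pole, the double pole there being purely cuspidal). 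Computing the residue at $s=\tfrac23$ then amounts to evaluating $\Gamma\!\left(2s-\tfrac23\right)\to\Gamma(\tfrac23)$, the residue $\tfrac12$ of $\Gamma(2s-\tfrac43)$, the two residues of $\mathcal M_\ell$, and $\overline{c_{\kappa_\ell}(1,\cdot)}$ at $w=\tfrac13,\tfrac23$; applying the functional equation $\tilde c_{\kappa_\ell}(1,w)=\tilde c_{\kappa_\ell}(1,1-w)$ to identify the first coefficient at $\tfrac23$ with the one at $\tfrac13$ collects everything into the stated expression, the lone $-\overline{\tilde c_\infty}(1,\tfrac23)$ coming from the $\delta_{\kappa_\ell,\infty}$ seed and the sum $\sum_\ell\overline{\tilde c_{\kappa_\ell}}(1,\tfrac13)\tilde c_{\kappa_\ell}(0,\tfrac23)$ from the scattering term.

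The main obstacle is the honest evaluation of the regularized inner product $\mathcal M_\ell(w)$: because $\theta\overline{\theta_d}$ is not square-integrable (its constant term grows like $y^{4/3}$), the pairing $\big\langle E_{\kappa_\ell}(*,\tfrac12+iu),\theta\overline{\theta_d}\big\rangle$ diverges on the critical line and must be defined by analytic continuation, and it is precisely this growing term that generates the pole at $s=\tfrac23$. Getting the residue right — tracking all the constants $\sigma^2$, $\mathrm{Vol}$, $\Gamma(\tfrac23)$ and $\zeta_K^*(\tfrac43)$, the signs, and the exact spectral arguments $\tfrac13$ and $\tfrac23$ of the completed coefficients $\tilde c_{\kappa_\ell}$ — is the delicate bookkeeping, together with verifying that the contour shift introduces no spurious poles in $0<\mathrm{Re}(s)<\tfrac23$.
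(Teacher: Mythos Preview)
Your overall strategy is sound and would work, but it is genuinely different from the paper's route, and the difference is worth noting.

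You propose to first form the regularized pairing $\mathcal M_\ell(w)=\langle E_{\kappa_\ell}(*,w),\theta\overline{\theta_d}\rangle$ by Rankin--Selberg unfolding, locate its poles at $w=\tfrac13,\tfrac23$ coming from the $y^{4/3}$ growth of $|\theta|^2$, and then shift the $u$-contour so that the residues carry the factor $\Gamma(2s-\tfrac43)\Gamma(2s-\tfrac23)$, whose pole at $s=\tfrac23$ is the one in the statement. The paper instead reverses the order of operations, following the argument of Hoffstein--Jung--Lee: it keeps $z$ free and forms
\[
\mathcal E_{\kappa_\ell}(z;1,s)=\frac{1}{2\pi}\int_{-\infty}^{\infty}\overline{c_{\kappa_\ell}\!\left(1,\tfrac12+iu\right)}\,\Gamma(2s-1+2iu)\Gamma(2s-1-2iu)\,E_{\kappa_\ell}\!\left(z,\tfrac12+iu\right)du,
\]
inserts the Fourier expansion of $E_{\kappa_\ell}$, and shifts the $w$-contour in the two \emph{constant-term} integrals across the poles of $\Gamma(2s-1\pm 2w)$ at $w=\pm(s-\tfrac12)$. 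The residues are explicit powers $y^{2-2s}$ (times $\Gamma(4s-2)/\zeta_K^*(2s)$), and only \emph{after} this shift is the inner product with $\theta\overline{\theta_d}$ taken; the pole at $s=\tfrac23$ then drops out of the elementary integral $\int_\eta^\infty \sigma^2 y^{\frac13-2s}\,dy=\frac{\sigma^2}{\frac43-2s}\eta^{\frac43-2s}$ over the Siegel domain, with the compact part of the fundamental domain contributing a bounded holomorphic piece.

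What each approach buys: the paper's order avoids precisely the obstacle you flag. By shifting the Gamma poles first, every object that is eventually paired with $\theta\overline{\theta_d}$ is either exponentially small (the non-constant Fourier modes and the shifted constant-term integrals, which are $O(y^{2-2\sigma})$) or an explicit power $y^{2-2s}$, so no regularization of $\langle E_{\kappa_\ell},\theta\overline{\theta_d}\rangle$ is ever needed. Your route is conceptually cleaner (the pole locations $w=\tfrac13,\tfrac23$ are visibly tied to the $y^{4/3}$ term), but it forces you to define $\mathcal M_\ell$ on the unitary axis by some regularization (truncation or Zagier's scheme) and to check that this regularized object really is what appears in Theorem \ref{SpectralSide}; you correctly identify this as the crux, but you should be aware that the paper sidesteps it entirely. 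One small caution: the functional equation you invoke, $\tilde c_{\kappa_\ell}(1,w)=\tilde c_{\kappa_\ell}(1,1-w)$, is not literally true cusp-by-cusp; the functional equation of the Eisenstein vector mixes cusps via the scattering matrix, so in your bookkeeping for the residue the identification of $\overline{\tilde c_{\kappa_\ell}}(1,\tfrac23)$ with $\overline{\tilde c_{\kappa_\ell}}(1,\tfrac13)$ would need that extra ingredient.
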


\begin{proof}
Recall from Theorem \ref{SpectralSide} and (\ref{continuous}) that for $\text{\rm Re}(s) >1$
\begin{align*}
&\mathcal E(s) =  \frac{1}{4\pi} \sum_{\ell = 1}^r \int\limits_{-\infty}^{\infty} \Big\langle P_1(*,s), \,E_{\kappa_{\ell}}\left(*,\tfrac12 + iu\right) \Big\rangle    \cdot \Big\langle E_{\kappa_{\ell}}\left(*,\tfrac12 + iu\right),\;\theta \overline{\theta_d} \Big\rangle \,du\\
&
= 9\sqrt{3} \pi^{\frac{1}{2}}\sum_{\ell = 1}^r\int\limits_{-\infty}^\infty \overline{c_{\kappa_{\ell}}\left(1,\tfrac{1}{2} + iu\right)} \frac{\Gamma(2s - 1 + 2iu) \Gamma(2s - 1 - 2iu)}{\Gamma\left(2s - \frac{1}{2}\right)}
 \\ & \hskip 230pt \cdot \Big\langle E_{\kappa_{\ell}}\left(*,\tfrac12 + iu\right),\;\theta \overline{\theta_d} \Big\rangle\, du.
\end{align*}
\vskip 5pt
    We first bound
    \[
       \int\limits_{-\infty}^{\infty} \overline{c_{\kappa_{\ell}}\left(1,\tfrac{1}{2} + iu\right)} \Gamma(2s - 1 + 2iu) \Gamma(2s - 1 - 2iu) \Big\langle E_{\kappa_{\ell}}\left(*,\tfrac{1}{2} + iu\right),\;\theta\overline{\theta_d} \Big\rangle\,  du.
    \]
     To do this, we use an argument similar to the one used in the proof of Lemma 2.4 in \cite{https://doi.org/10.48550/arxiv.2109.10434}, though in our case we must use additional residue terms and analytic continuation.

     Define
     \begin{align*}
       &\mathcal E_{\kappa_{\ell}}(z;\mu,s) \\ &\hskip 10pt = \frac{1}{2\pi} \int\limits_{-\infty}^{\infty} \overline{c_{\kappa_{\ell}}\left(\mu,\tfrac{1}{2} + iu\right)} \Gamma(2s - 1 + 2iu) \Gamma(2s - 1 - 2iu) E_{\kappa_{\ell}}\left(z,\tfrac{1}{2} + iu\right) du.
     \end{align*}
     We know that
     \[
       c_{\kappa_{\ell}}\left(\mu,\tfrac{1}{2} + iu\right) = \frac{\tilde{c}_{\kappa_{\ell}}\left(\mu,\tfrac{1}{2} + iu\right)}{\zeta_K^*(1 + 2iu)},
     \]
     where $\tilde{c}_{\kappa_{\ell}}\left(\mu,\tfrac{1}{2} + iu\right)$ is a Dirichlet polynomial in $iu$ and
     \[
       \zeta_K^*(s) = \left(\tfrac{3}{4\pi^2}\right)^{\frac{s}{2}} \Gamma(s) \zeta_K(s)
     \]
     is the completed zeta function for $K$. We let $\overline{\tilde{c}_{\kappa_{\ell}}}\left(\mu,\tfrac{1}{2} + iu\right)$ be a Dirichlet polynomial in $iu$ such that
     \[
       \overline{\tilde{c}_{\kappa_{\ell}}}\left(\mu,\tfrac{1}{2} - iu\right) = \overline{\tilde{c}_{\kappa_{\ell}}\left(\mu,\tfrac{1}{2} + iu\right)},
      \]
      so that
      \[
        \overline{c_{\kappa_{\ell}}\left(\mu,\tfrac{1}{2} + iu\right)} = \frac{\overline{\tilde{c}_{\kappa_{\ell}}}\left(\mu,\tfrac{1}{2} - iu\right)}{\zeta_K^*(1 - 2iu)}.
      \]
      Substituting in the Fourier expansion of the Eisenstein series yields
      \begin{align*}
        &\mathcal E_{\kappa_{\ell}}(z;\mu,s)
        = \frac{\delta_{\kappa_{\ell},\infty}}{2\pi i} \int\limits_{\textup{Re}(w) = 0} \frac{\overline{\tilde{c}_{\infty}}\left(\mu,\tfrac{1}{2} - w\right)}{\zeta_K^*(1 - 2w)}\Gamma(2s - 1 + 2w) \Gamma(2s - 1 - 2w) \\ &\hskip 325pt \cdot y^{1 + 2w}\, dw \\ &\hskip 80pt + \frac{1}{2\pi i} \int\limits_{\textup{Re}(w) = 0} \frac{\overline{\tilde{c}_{\kappa_{\ell}}}\left(\mu,\tfrac{1}{2} - w\right) c_{\kappa_{\ell}}\left(0,\tfrac{1}{2} + w\right)}{\zeta_K^*(1 - 2w)}\Gamma(2s - 1 + 2w) \\ &\hskip 247pt \cdot \Gamma(2s - 1 - 2w) y^{1 - 2w} \,dw \\ &\hskip 80pt + \frac{1}{2\pi} \int\limits_{-\infty}^{\infty} \frac{\overline{\tilde{c}_{\kappa_{\ell}}\left(\mu,\tfrac{1}{2} + iu\right)}}{\zeta_K^*(1 - 2iu)}\Gamma(2s - 1 + 2iu) \Gamma(2s - 1 - 2iu) \\ &\hskip 130pt \cdot \sum_{0 \neq \nu \in \lambda^{-3}\mathcal O_K} c_{\kappa_{\ell}}\left(\nu,\tfrac{1}{2} + iu\right) yK_{2iu}(4\pi|\nu|y) e(\nu x) \,du.
      \end{align*}
      We have the functional equation $\zeta_K^*(1 - 2w) = \zeta_K^*(2w)$, so
      \[
        \frac{\overline{\tilde{c}_{\kappa_{\ell}}}\left(\mu,\tfrac{1}{2} - w\right) c_{\kappa_{\ell}}\left(0,\tfrac{1}{2} + w\right)}{\zeta_K^*(1 - 2w)} = \frac{\overline{\tilde{c}_{\kappa_{\ell}}}\left(\mu,\tfrac{1}{2} - w\right) \tilde{c}_{\kappa_{\ell}}\left(0,\tfrac{1}{2} + w\right)}{\zeta_K^*(1 + 2w)},
      \]
      where $\tilde{c}_{\kappa_{\ell}}(0,s)$ is a Dirichlet polynomial such that
      \[
        c_{\kappa_{\ell}}(0,s) = \frac{\zeta_K^*(2s - 1)}{\zeta_K^*(2s)}\tilde{c}_{\kappa_{\ell}}(0,s).
      \]
      The series in the third integral is absolutely convergent, and its sum is $O\left(e^{-2\pi y}\right)$. Move the line of integration for the first two integrals, passing over the simple poles at $w = -s + \frac{1}{2}$ and $w = s - \frac{1}{2}$, respectively, and no other poles.

      We explicitly analyze the first integral; the argument for the second one is analogous. We have
      \begin{align*}
    &    \frac{1}{2\pi i} \int\limits_{\textup{Re}(w) = 0} \frac{\overline{\tilde{c}_{\infty}}\left(\mu,\tfrac{1}{2} - w\right)}{\zeta_K^*(1 - 2w)}\Gamma(2s - 1 + 2w) \Gamma(2s - 1 - 2w) y^{1 + 2w} \,dw \\
    &
    \hskip 17pt
        = \frac{1}{2\pi i}\hskip-4pt \int\limits_{\textup{Re}(w) = -\sigma + \frac{1}{2} + \varepsilon}
        \hskip-12pt
         \frac{\overline{\tilde{c}_{\infty}}\left(\mu,\tfrac{1}{2} - w\right)}{\zeta_K^*(1 - 2w)}\Gamma(2s - 1 + 2w)  \cdot \Gamma(2s - 1 - 2w) y^{1 + 2w} dw 
         \\ 
         &\hskip 243pt 
         + 2\frac{\overline{\tilde{c}_{\infty}}(\mu,s)}{\zeta_K^*(2s)}\Gamma(4s - 2) y^{2 - 2s}.
      \end{align*}
       We compute
      \begin{align} \label{FundDomainIntegral}
    &    \left\langle 2\frac{\overline{\tilde{c}_{\infty}}(\mu,s)}{\zeta_K^*(2s)}\Gamma(4s - 2) y^{2 - 2s},\;\,\theta\overline{\theta_d} \right\rangle 
        = \int\limits_{\Gamma(9d^2) \backslash \mathfrak h^3} 2\frac{\overline{\tilde{c}_{\infty}}(\mu,s)}{\zeta_K^*(2s)}\Gamma(4s - 2) y^{2 - 2s}
        \nonumber\\
        &
        \hskip 100pt
         \cdot\left(\sigma y^{\frac{2}{3}} + \sum_{\mu \in \lambda^{-3}\mathcal O_K} \overline{\tau(\mu)} yK_{\frac{1}{3}}(4\pi|\mu|y) e^{-4\pi i\textup{Re}(\mu x)}\right)
         \nonumber\\
         &
         \hskip 100pt 
          \cdot \left(\sigma y^{\frac{2}{3}} + \sum_{\nu \in \lambda^{-3}\mathcal O_K} \tau(\nu) yK_{\frac{1}{3}}(4\pi|\nu|y) e^{4\pi i\textup{Re}(\nu dx)}\right) \frac{dxdy}{y^3}
          \nonumber \\
         & \nonumber
         \end{align}
         
         Continuing the computation we obtain
         \begin{align}
     & \left\langle 2\frac{\overline{\tilde{c}_{\infty}}(\mu,s)}{\zeta_K^*(2s)}\Gamma(4s - 2) y^{2 - 2s},\;\,\theta\overline{\theta_d} \right\rangle    =\; 2\frac{\overline{\tilde{c}_{\infty}}(\mu,s)}{\zeta_K^*(2s)}\Gamma(4s - 2) 
       \nonumber \\ \nonumber 
        &
        \hskip 50pt 
        \cdot \int\limits_{\Gamma(9d^2) \backslash \mathfrak h^3} \left[\hskip-36pt\phantom{\int\limits_{\Gamma(9d^2) \backslash \mathfrak h^3}}\sigma^2y^{\frac{1}{3} - 2s} + \sigma y^{\frac{2}{3} - 2s} \sum_{\mu \in \lambda^{-3}\mathcal O_K} \overline{\tau(\mu)} K_{\frac{1}{3}}(4\pi|\mu|y) e^{-4\pi i\textup{Re}(\mu x)}\right. \\
        &
        \hskip 80pt
          + \sigma y^{\frac{2}{3} - 2s}\hskip-8pt \sum_{\nu \in \lambda^{-3}\mathcal O_K} \hskip-5pt \tau(\nu) K_{\frac{1}{3}}(4\pi|\nu|y) e^{4\pi i\textup{Re}(\nu dx)}
           \\ \nonumber &\hskip 108pt 
           + y^{1 - 2s} \left(\,\sum_{\mu \in \lambda^{-3}\mathcal O_K} \hskip-8pt \overline{\tau(\mu)} K_{\frac{1}{3}}(4\pi|\mu|y) e^{-4\pi i\textup{Re}(\mu x)}\right) \\ &\hskip 145pt \cdot  \left.\left(\sum_{\nu \in \lambda^{-3}\mathcal O_K} \tau(\nu) K_{\frac{1}{3}}(4\pi|\nu|y) e^{4\pi i\textup{Re}(\nu dx)}\right)\phantom{\int\limits_{\Gamma(9d^2) \backslash \mathfrak h^3}}\hskip-38pt \right] dxdy.
      \nonumber \end{align}
   
      To bound the above integrals we  let
    $$\Gamma(9d^2)\backslash\mathfrak h^3 =  D(\eta) \cup B(\eta)$$
    where for $\eta >0$ (sufficiently large) we define the Siegel domain
     $$D(\eta) := \Big \{x+jy \in \mathfrak h^3 \;\Big |\; x\in \mathbb C/\big(9d^2\mathcal O_K\big ), \; y>\eta\Big\}$$
     and where 
      $B(\eta)$ is a compact set which can be thought of as the bottom of the fundamental domain. Then it is clear that
   $$\int\limits_{\Gamma(9d^2) \backslash \mathfrak h^3} \; = \; \int\limits_{D(\eta)} \; + \; \int\limits_{B(\eta)}.$$   
     First of all, the integral in (\ref{FundDomainIntegral}) restricted to the compact domain $B(\eta)$ is bounded for $\varepsilon<\text{\rm Re}(s)<1$. Next, the integral in (\ref{FundDomainIntegral}) restricted to the the Siegel domain $D(\eta)$ is given by
 \begin{align*}
 &
 \int\limits_{D(\eta)} \left[\sigma^2y^{\frac{1}{3} - 2s} + \sigma y^{\frac{2}{3} - 2s} \sum_{\mu \in \lambda^{-3}\mathcal O_K} \overline{\tau(\mu)} K_{\frac{1}{3}}(4\pi|\mu|y) e^{-4\pi i\textup{Re}(\mu x)}\right. \\
        &
        \hskip 50pt
         + \sigma y^{\frac{2}{3} - 2s}\hskip-8pt \sum_{\nu \in \lambda^{-3}\mathcal O_K} \hskip-5pt \tau(\nu) K_{\frac{1}{3}}(4\pi|\nu|dy) e^{4\pi i\textup{Re}(\nu dx)} \\
         &
         \hskip 70pt
                  + y^{1 - 2s} \left(\,\sum_{\mu \in \lambda^{-3}\mathcal O_K} \hskip-8pt \overline{\tau(\mu)} K_{\frac{1}{3}}(4\pi|\mu|y) e^{-4\pi i\textup{Re}(\mu x)}\right)
                   \\ & 
                   \hskip 130pt 
       \left.  \hskip-7pt \cdot \left(\sum_{\nu \in \lambda^{-3}\mathcal O_K} \hskip-7pt\tau(\nu) K_{\frac{1}{3}}(4\pi|\nu|dy) e^{4\pi i\textup{Re}(\nu dx)}\right)\phantom{\int\limits_{\Gamma(9d^2) \backslash \mathfrak h^3}}\hskip-38pt \right] dxdy\\
       &
       \\
       &
       \hskip 23pt
         = \textup{Vol}\left(\mathbb C / 9d^2\mathcal O_K\right) \\ &\hskip 40pt \cdot \int\limits_{\eta}^{\infty} \left(\sigma^2y^{\frac{1}{3} - 2s} + y^{1 - 2s} \hskip-8pt \sum_{\nu \in \lambda^{-3}\mathcal O_K} \hskip -8pt \tau(\nu) \overline{\tau(d\nu)} K_{\frac{1}{3}}(4\pi|\nu|y) K_{\frac{1}{3}}(4\pi|\nu|dy)\right) dy. 
 \end{align*}

 If $\textup{Re}(s) > \frac{2}{3}$, then $y^{\frac{1}{3} - 2s}$ is integrable, and its integral is $\frac{1}{\frac{4}{3} - 2s}\eta^{\frac{4}{3} - 2s}$. This expression can be analytically continued to all $s \neq \frac{2}{3}$. The summands in the second term each have a product of two Bessel functions that decay exponentially, so as in the proof of Proposition \ref{ThetaInnerProductBound}  their integrals are asymptotically smaller than the first term and so can be disregarded. Thus for any $\varepsilon > 0$, for $\textup{Re}(s) > \varepsilon$ and $\left|s - \frac{2}{3}\right| > \varepsilon$, we have the bound
      \[
        \left\langle 2\frac{\overline{\tilde{c}_{\infty}}(\mu,s)}{\zeta_K^*(2s)}\Gamma(4s - 2) y^{2 - 2s}, \;\,\theta\overline{\theta_d} \right\rangle \ll_{\varepsilon} \,|s|^{4\text{\rm Re}(s) - \frac{3}{2}} e^{-2\pi|s|}.
      \]
      The same bound holds for the inner product of the residue from the second integral with $\theta\overline{\theta_d}$. By the same argument as in \cite{https://doi.org/10.48550/arxiv.2109.10434}, the absolute values of the shifted integrals are less than a constant times $y^{2 - 2\text{\rm Re}(s)}$, so their inner product with $\theta\overline{\theta_d}$ converges whenever $\text{\rm Re}(s) > 0$ and is less than a constant times $\eta^{\frac{4}{3} - 2\text{\rm Re}(s)}$.

      Therefore, by using the above bounds and Stirling's formula, the absolute value of each individual summand in the definition of $\mathcal E(s)$ is less than a constant times $|s|^{2\text{\rm Re}(s) - \frac{1}{2}} e^{-\pi|s|}$. Because the number of such summands is finite and constant, we thus have
      \[
        \mathcal E(s) \ll_{\varepsilon}\, |s|^{2\text{\rm Re}(s) - \frac{1}{2}} e^{-\pi|s|}.
      \]

        From the above computations, we see that the summand in $\mathcal E(s)$ corresponding to the pole $\kappa_{\ell}$ has a simple pole at $s = \frac{2}{3}$ and no other poles with $\textup{Re}(s) > 0$, and its residue at $s = \frac{2}{3}$ is
        \[
          \textup{Vol}\left(\mathbb C / 9d^2\mathcal O_K\right) \sigma^2\frac{\Gamma\left(\frac{2}{3}\right)}{\zeta_K^*\left(\frac{4}{3}\right)} \bigg(\delta_{\kappa_{\ell},\infty} \left(-\overline{\tilde{c}_{\infty}}\left(1,\tfrac{2}{3}\right)\right) + \overline{\tilde{c}_{\kappa_{\ell}}}\left(1,\tfrac{1}{3}\right) \tilde{c}_{\kappa_{\ell}}\left(0,\tfrac{2}{3}\right)\hskip-4pt\bigg).
        \]
        Thus $\mathcal E(s)$ has a possible simple pole at $s = \frac{2}{3}$ and no other poles with $\textup{Re}(s) > 0$, and its residue at $s = \frac{2}{3}$ is
        \[
          \textup{Vol}\left(\mathbb C / 9d^2\mathcal O_K\right) \sigma^2\frac{\Gamma\left(\frac{2}{3}\right)}{\zeta_K^*\left(\frac{4}{3}\right)} \left(-\overline{\tilde{c}_{\infty}}\left(1,\tfrac{2}{3}\right) + \sum_{\ell = 1}^r \overline{\tilde{c}_{\kappa_{\ell}}}\left(1,\tfrac{1}{3}\right) \tilde{c}_{\kappa_{\ell}}\left(0,\tfrac{2}{3}\right)\right).
        \]
 \end{proof}

\vskip 10pt
\section{Geometric side of the trace formula} \label{GeometricSideSection}
\vskip 5pt

The geometric side of the trace formula is obtained by computing the inner product $\Big\langle P_1(*,s), \,\theta\, \overline{\theta_d}\Big\rangle
$ with the Rankin-Selberg method and then 
using the Fourier expansions of the theta functions. The  geometric side  is given in the following theorem.

\begin{theorem} 
\label{Sd(s)Theorem} 
Fix $\varepsilon>0$ and let $s\in\mathbb C$ with $\text{\rm Re}(s)>1+\varepsilon.$ Then we have
\begin{align*}
    &\Big\langle P_1(*,s), \;\theta\, \overline{\theta_d}\Big\rangle
  = \text{\rm Vol}\Big(\mathbb C / \left(9d^2 \mathcal O_K\right)\Big) \left(\frac{3^{\frac{11}{2}} 2^{-6s}}{ \pi^{2s - \frac{5}{6}}} \frac{\Gamma(2s) \Gamma\left(2s - \frac{2}{3}\right)}{\Gamma\left(2s + \frac{1}{6}\right)} + S_d(s)\right)
  \end{align*}
where
\begin{align*} &
S_d(s)  =  \frac{2^{-6s}\pi^{-2s+\frac12}\;\Gamma(2s)^2}{4\pi i\;\Gamma(s+\frac12)\,\Gamma(s)}  \int\limits_{\text{\rm Re}(w)=-1-\varepsilon} \hskip-10pt
 \frac{\Gamma(s+\frac12+w) \Gamma(s+w) \Gamma(-w)}{\Gamma(2s+\frac12+w)} \\ &\hskip 80pt \cdot \sum_{\nu\in\lambda^{-3}\mathcal O_K} \frac{\tau(\nu) \overline{ \tau(1+d\nu)}}{\big(2|\nu|\cdot|1+d\nu|\big)^{-w} } \int\limits_0^1 \left((1-t)^{-\frac43-w} +(1-t)^{-\frac23-w}\right) \\ &\hskip 195pt \cdot \left((a_d(\nu)+1)\cdot (1-t)+\tfrac{ t^2}{2}  \right)^{w} \,dt\,dw.
\end{align*}
The poles of $S_d(s)$ with $\textup{Re}(s) > 0$ are precisely given by the poles of the expression $\Big\langle P_1(*,s), \;\theta\, \overline{\theta_d}\Big\rangle$ together with a simple pole at $s = \frac{1}{3}$. That is, the poles of $S_d(s)$ with $\textup{Re}(s) > 0$ are precisely a double pole at $s = \frac{1}{2}$; possible simple poles at $s = \frac{1}{2} \pm it_j$ for $t_j \neq 0$, where the poles at $\frac{1}{2} \pm it_j$ occur if and only if $\big\langle u_j,\theta\overline{\theta_d} \big\rangle \neq 0$; a possible simple pole at $s = \frac{2}{3}$; and a simple pole at $s = \frac{1}{3}$. 
\vskip 5pt
The residue of $S_d(s)$ at $s = \frac{1}{3}$ is $-3^{\frac{11}{2}}2^{-3}\pi^{\frac{1}{6}}\frac{\Gamma\left(\frac{2}{3}\right)}{\Gamma\left(\frac{5}{6}\right)}$, while the residue of $S_d(s)$ at each of the poles of $\Big\langle P_1(*,s), \;\theta\, \overline{\theta_d}\Big\rangle$ is $\frac{1}{\textup{Vol}\left(\mathbb C / 9d^2\mathcal O_K\right)}$ times the residue of $\Big\langle P_1(*,s), \;\theta\, \overline{\theta_d}\Big\rangle$ at that pole.
\end{theorem}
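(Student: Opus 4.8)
The plan is to evaluate $\langle P_1(*,s),\theta\overline{\theta_d}\rangle$ by the Rankin--Selberg method. Since $\theta\overline{\theta_d}$ is $\Gamma(9d^2)$-invariant and $P_1(z,s)=\sum_{\gamma\in\Gamma_\infty(9d^2)\backslash\Gamma(9d^2)}I_s(\gamma z)e(\gamma z)$ (Definition \ref{Def:PoincareSeries}), I would unfold the Poincar\'e series, replacing the integral over $\Gamma(9d^2)\backslash\mathfrak h^3$ by one over $\Gamma_\infty(9d^2)\backslash\mathfrak h^3$, whose fundamental domain is $\{x\in\mathbb C/(9d^2\mathcal O_K),\;y>0\}$. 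This gives
$$\langle P_1(*,s),\theta\overline{\theta_d}\rangle=\int_0^\infty\int_{x\in\mathbb C/(9d^2\mathcal O_K)} y^{2s}\,e(z)\,\overline{\theta(z)}\,\theta(dz)\,\frac{dx\,dy}{y^3},$$
into which I substitute the Fourier expansion of Proposition \ref{CubicThetaFunction} for both $\overline{\theta}$ and $\theta(dz)=\theta_d(z)$ and expand the triple product.

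The inner $x$-integral is an orthogonality relation for the additive characters of $\mathbb C/(9d^2\mathcal O_K)$: the Poincar\'e factor carries frequency $+1$, the generic term of $\overline\theta$ carries $-\mu$, and the generic term of $\theta(dz)$ carries $+d\nu$. Only terms with $1-\mu+d\nu=0$, i.e.\ $\mu=1+d\nu$, survive, each weighted by $\text{Vol}(\mathbb C/(9d^2\mathcal O_K))$. For $d>1$ this kills the constant--constant term and every term in which $\overline\theta$ is constant, so the sum splits into a single diagonal term (in which $\theta(dz)$ contributes its constant term and $\overline\theta$ its $\mu=1$ coefficient) and the main sum over $0\neq\nu\in\lambda^{-3}\mathcal O_K$ with $\mu=1+d\nu$ carrying $\tau(\nu)\overline{\tau(1+d\nu)}$. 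In the diagonal term the remaining $y$-integral is $\int_0^\infty y^{2s-\frac43}e^{-4\pi y}K_{\frac13}(4\pi y)\,dy$; because the exponential rate and the Bessel rate coincide, the usual ${}_2F_1$ factor in the classical evaluation degenerates to $1$, and one obtains exactly the explicit gamma-factor term $\frac{3^{11/2}2^{-6s}}{\pi^{2s-5/6}}\frac{\Gamma(2s)\Gamma(2s-\frac23)}{\Gamma(2s+\frac16)}$ after inserting $\sigma=\frac{9\sqrt3}{2}$ and the value of $\tau(1)$.

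For the main sum, writing $A=|1+d\nu|$ and $B=d|\nu|$, the $y$-integral to evaluate is
$$\int_0^\infty y^{2s-1}e^{-4\pi y}K_{\frac13}(4\pi Ay)K_{\frac13}(4\pi By)\,dy.$$
I would treat this as a Mellin convolution of the three factors $e^{-4\pi y}$, $K_{1/3}(4\pi Ay)$, $K_{1/3}(4\pi By)$, whose individual Mellin transforms are known ratios of gamma functions; the convolution produces a Mellin--Barnes integral in an auxiliary variable $w$. Collecting the gamma factors and using the Legendre duplication formula to convert $\Gamma(s)\Gamma(s+\frac12)$ into $\Gamma(2s)$ produces the kernel $\Gamma(s+\frac12+w)\Gamma(s+w)\Gamma(-w)/\Gamma(2s+\frac12+w)$ together with a two-variable Appell hypergeometric function whose arguments are built from $A$, $B$, and the exponential rate. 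I then rewrite this Appell function via \'Emile Picard's integral representation \cite{MR1508705}: after the substitution $t\mapsto 1-t$ one checks that the inner integral is precisely $\mathcal F(-w,\,a_d(\nu)+1)$, where $a_d(\nu)$ is the law-of-cosines quantity attached to the triangle with sides $|\nu|$, $|1+d\nu|$, $1$. Pulling $(2|\nu||1+d\nu|)^w$ out and summing over $\nu$ yields the displayed contour integral for $S_d(s)$, with the contour at $\text{Re}(w)=-1-\varepsilon$, where the $\nu$-sum converges as in Proposition \ref{Ld(s)convergence}. I expect this double-Bessel evaluation to be the main obstacle: one must select the correct classical formula for $\int_0^\infty y^{\mu-1}e^{-py}K_\nu(ay)K_\nu(by)\,dy$, see that its two hypergeometric variables collapse to the single quantity $a_d(\nu)$ so that $\mathcal F(\cdot,a_d(\nu)+1)$ appears, and justify the contour placement and the interchange of sum and integral.

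Finally, the pole and residue statements follow formally from the identity, rewritten as $S_d(s)=\frac{1}{\text{Vol}(\mathbb C/(9d^2\mathcal O_K))}\langle P_1(*,s),\theta\overline{\theta_d}\rangle-\frac{3^{11/2}2^{-6s}}{\pi^{2s-5/6}}\frac{\Gamma(2s)\Gamma(2s-\frac23)}{\Gamma(2s+\frac16)}$. By Theorem \ref{SpectralSideBound} the first term continues meromorphically to $\text{Re}(s)>0$ with a double pole at $s=\frac12$, possible simple poles at $s=\frac12\pm it_j$ (present iff $\langle u_j,\theta\overline{\theta_d}\rangle\neq0$), and a possible simple pole at $s=\frac23$, with residues equal to $\frac{1}{\text{Vol}}$ times those of $\langle P_1(*,s),\theta\overline{\theta_d}\rangle$. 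The explicit gamma term is meromorphic with poles only where $\Gamma(2s)$ or $\Gamma(2s-\frac23)$ is singular; in $\text{Re}(s)>0$ the sole such pole is the simple pole of $\Gamma(2s-\frac23)$ at $s=\frac13$, which is disjoint from the spectral poles. Hence $S_d(s)$ has exactly the claimed poles, and at $s=\frac13$ its residue equals minus that of the gamma term; since $\text{Res}_{s=1/3}\Gamma(2s-\frac23)=\frac12$ and $\Gamma(\frac23+\frac16)=\Gamma(\frac56)$, this is $-3^{11/2}2^{-3}\pi^{1/6}\frac{\Gamma(2/3)}{\Gamma(5/6)}$, as stated.
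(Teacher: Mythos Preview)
Your unfolding (Step~1 in the paper) and your pole/residue analysis (Step~4) match the paper exactly, including the identification of the diagonal term and the computation of the residue at $s=\tfrac13$.

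Where you diverge is in the evaluation of the double-Bessel integral
\[
\int_0^\infty y^{2s-1}e^{-4\pi y}K_{1/3}(4\pi|\nu|y)\,K_{1/3}(4\pi|1+d\nu|y)\,dy.
\]
You propose to treat this as a triple Mellin convolution, obtaining a single Mellin--Barnes integral in $w$ whose kernel is an Appell function, and then invoke Picard. This is the right destination, but a direct three-factor Mellin convolution naturally produces \emph{two} auxiliary contour variables, and you have not explained how they collapse to one. The ``collapse to the single quantity $a_d(\nu)$'' that you flag as the main obstacle is precisely what needs a mechanism.

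The paper supplies that mechanism by a different, more explicit route. First it uses the Watson-type product formula (following Jutila--Motohashi)
\[
K_a(my)K_a(ny)=\int_0^\infty K_0\!\left(\sqrt{m^2+n^2+2mn\cosh u}\;y\right)\cosh(au)\,du,
\]
which already packages the two Bessel arguments into the single combination $m^2+n^2+2mn\cosh u$; this is where $a_d(\nu)$ enters. The $y$-integral against $e^{-y}y^{2s-1}$ of a \emph{single} $K_0$ is then a classical Mellin transform giving ${}_2F_1\!\left(s+\tfrac12,s;2s+\tfrac12;1-\alpha(u)^2\right)$, and Barnes' contour representation of ${}_2F_1$ furnishes the single $w$-integral with the displayed Gamma kernel. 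Finally the remaining $u$-integral $\int_0^\infty(a_d(\nu)+\cosh u)^w\cosh(u/3)\,du$ is evaluated in closed form as a pair of Appell $F_1$'s (verified symbolically), to which Picard's integral representation is applied to produce the inner $t$-integral in the theorem.

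So your sketch is sound in outline and agrees with the paper at the endpoints, but the paper's route through the $K_a K_a\!\to\!K_0$ product formula and the explicit $\cosh$-integral is what actually effects the two-to-one reduction you anticipate; your Mellin-convolution description would need this or an equivalent identity to be made rigorous.
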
 
\begin{proof} The proof will be given in four steps.

\vskip 15pt 
 \noindent
 $\underline{\text{{\bf STEP 1}}}${\bf : Rankin-Selberg method.}
\vskip 5pt 
 We first prove
 \begin{equation} \label{SDSFirst Form}
\boxed{S_d(s) = \sum_{\nu\in\lambda^{-3}\mathcal O_K} \tau(\nu) \overline{\tau(1+d\nu)} \int\limits_0^\infty K_\frac13\left(4\pi|\nu| y\right)
K_\frac13\left(4\pi \big|1+d\nu\big| y\right)\, e^{-4\pi y} y^{2s} \;\frac{dy}{y}.}
\end{equation}
 Unraveling the Poincar\'e series $P_1(*,s)$ with the Rankin-Selberg method we see that
  \begin{align*}
   \Big\langle P_1(*,s), \;\theta\, \overline{\theta_d}\Big\rangle & \; = \int\limits_{\Gamma(9d^2) \backslash \mathcal H^3} P_1(z,s) \overline{\theta(z)} \theta(dz)\, \frac{dxdy}{y^3} \;\;
    \\ &= 
    \hskip-8pt\int\limits_{\Gamma_{\infty}(9d^2) \backslash \mathcal H^3} \hskip-12pt y^{2s} e^{-4\pi y} e^{4\pi i\text{\rm Re}(x)} \overline{\theta(z)} \theta(dz)\, \frac{dxdy}{y^3}  
       \\
       &
         = \int\limits_{x \in \mathbb C/ (9d^2 \mathcal O_K)} \int\limits_{y=0}^\infty y^{2s + \frac{4}{3}} e^{-4\pi y} e^{4\pi i\text{\rm Re}(x)} \\
    &\hskip 40pt \cdot \left(\sigma + \sum_{\mu \in \lambda^{-3} \mathcal O_K} \overline{\tau(\mu)} y^{\frac{1}{3}} K_{\frac{1}{3}}(4\pi|\mu|y) e^{-4\pi i\text{\rm Re}(\mu x)}\right) 
    \\ 
    &\hskip 40pt
    \cdot \left(\sigma + \sum_{\nu \in \lambda^{-3} \mathcal O_K} \tau(\nu) y^{\frac{1}{3}} K_{\frac{1}{3}}(4\pi|\nu|y) e^{4\pi i\text{\rm Re}(\nu dx)}\right) \frac{dxdy}{y^3}. 
  \end{align*}

 Continuing the computation,
  \begin{align*}
     &\Big\langle P_1(*,s), \;\theta\, \overline{\theta_d}\Big\rangle
    \\ &\hskip 30pt = \text{Vol}\left(\mathbb C / \left(9d^2 \mathcal O_K\right)\right) \left(\overline{\tau(1)} \,\sigma \int\limits_{y=0}^\infty y^{2s + \frac{5}{3}} e^{-4\pi y} K_{\frac{1}{3}}(4\pi y)\, \frac{dy}{y^3} + A 
 \right.   \\
    &
    \hskip 60pt
    + \sum_{\underset{\mu - d\nu = 1}{\scriptscriptstyle{\mu,\nu \in \lambda^{-3} \mathcal O_K}}} \overline{\tau(\mu)} \tau(\nu) \left.\int\limits_{y=0}^\infty y^{2s + 2} e^{-4\pi y} K_{\frac{1}{3}}(4\pi|\mu|y) K_{\frac{1}{3}}(4\pi|\nu|y)\, \frac{dy}{y^3}\right)
    \\
    &\hskip 30pt
    = \text{Vol}\left(\mathbb C / \left(9d^2 \mathcal O_K\right)\right) \left(27\sigma \int\limits_{y=0}^\infty y^{2s - \frac{4}{3}} e^{-4\pi y} K_{\frac{1}{3}}(4\pi y)\, dy  \;+ \; A \right.\\
    &
    \hskip 60pt
    + \sum_{\underset{\mu - d\nu = 1}{\scriptscriptstyle{\mu,\nu \in \lambda^{-3} \mathcal O_K}}} \overline{\tau(\mu)} \tau(\nu) \left.\int\limits_{y=0}^\infty y^{2s - 1} e^{-4\pi y} K_{\frac{1}{3}}(4\pi|\mu|y) K_{\frac{1}{3}}(4\pi|\nu|y)\, dy\right),
  \end{align*}
where $A$ is defined as follows. If $-\frac{1}{d} \not\in \lambda^{-3} \mathcal O_K$, then we directly have $A = 0$ because there are no summands containing a nonzero integral since no $\nu \in \lambda^{-3} \mathcal O_K$ satisfies $1 + d\nu = 0$. Since $d$ is a (cubefree) positive integer not equal to 1, then $-\frac{1}{d} \in \lambda^{-3} \mathcal O_K$ if and only if $d = 3$. Thus if $d \neq 3$, $A = 0$, and if $d = 3$,
\[
  A = \tau(-1 / 3) \,\sigma \int\limits_{y=0}^\infty y^{2s - \frac{4}{3}} e^{-4\pi y} K_{\frac{1}{3}}\left(\frac{4\pi y}{3}\right) dy.
\]
Because $-\frac{1}{3} = \lambda^{-2}$, which is not in any of the forms specified in the definition of $\tau$, $\tau\left(-\frac{1}{3}\right) = 0$. Thus $A = 0$ in this case as well. Therefore, $A = 0$ in all cases, so we can drop it from the sum. Thus we have

\begin{equation}\label{innerproduct}
  \begin{aligned}
    &\Big\langle P_1(*,s), \;\theta\, \overline{\theta_d}\Big\rangle  = \text{Vol}\left(\mathbb C / 9d^2 \mathcal O_K\right) \left(27\sigma \int\limits_{y=0}^\infty  y^{2s - \frac{4}{3}} e^{-4\pi y} K_{\frac{1}{3}}(4\pi y)\, dy\right. 
    \\ 
    &\hskip 60pt
    + \sum_{\underset{\mu - d\nu = 1}{\scriptscriptstyle{\mu,\nu \in \lambda^{-3} \mathcal O_K}}} \overline{\tau(\mu)} \tau(\nu) \left.\int\limits_{y=0}^\infty  y^{2s - 1} e^{-4\pi y} K_{\frac{1}{3}}(4\pi|\mu|y) K_{\frac{1}{3}}(4\pi|\nu|y)\, dy\right).
  \end{aligned}
\end{equation}

The first integral equals

\begin{equation}\label{firstintegral}
  \begin{aligned}
    \int\limits_{0}^\infty  y^{2s - \frac{4}{3}} e^{-4\pi y} K_{\frac{1}{3}}(4\pi y)\, dy 
    &
      = \frac{\sqrt{\pi}}{ (8\pi)^{2s - \frac{1}{3}} } \frac{\Gamma(2s) \Gamma\left(2s - \frac{2}{3}\right)}{\Gamma\left(2s + \frac{1}{6}\right)} \cdot F\left(2s,\tfrac{5}{6};2s + \tfrac{1}{6};0\right)
    \\
    &
    = 2^{-6s + 1} \pi^{-2s + \frac{5}{6}} \frac{\Gamma(2s) \Gamma\left(2s - \frac{2}{3}\right)}{\Gamma\left(2s + \tfrac{1}{6}\right)},
  \end{aligned}
\end{equation}
where $F$ is the Gaussian hypergeometric function; the final expression is holomorphic on $\left\{s \in \mathbb C : \text{\rm Re}(s) > \frac{1}{3}\right\}$ and has a simple pole at $s = \frac{1}{3}$ with residue $2^{-1} \pi^{\frac{1}{6}} \frac{\Gamma\left(\frac{2}{3}\right)}{\Gamma\left(\frac{5}{6}\right)}$.

\vskip 5pt
The sum on the right side of (\ref{innerproduct}) is $S_d(s)$. This establishes (\ref{SDSFirst Form}).

\vskip 15pt 
 \noindent
 $\underline{\text{{\bf STEP 2}}}${\bf : Recomputing $S_d(s)$ with hypergeometric functions.}
\vskip 10pt
In this step we prove that 

\begin{equation}
\label{Sd(s)SecondForm}
\boxed{\begin{aligned}\displaystyle &S_d(s)  = \frac{2^{-6s}\pi^{-2s+\frac12}\Gamma(2s)^2}{\Gamma(s+\frac12)\Gamma(s)} \sum_{\nu\in\lambda^{-3}\mathcal O_K} \frac{\tau(\nu) \overline{\tau(1+d\nu)}}{2\pi i}\\ &\hskip 35pt \displaystyle \cdot \int\limits_0^\infty \int\limits_{\text{\rm Re}(w)=-1-\varepsilon}
\hskip-10pt\frac{\Gamma(s+\frac12+w) \Gamma(s+w) \Gamma(-w)}{\Gamma(2s+\frac12+w)}\\
&\hskip 35pt \displaystyle
\cdot \Big(|\nu|^2 + |1+d\nu|^2 - 1 \;+\; 2|\nu|\, |1+d\nu|\cdot \cosh(u)   \Big)^w \cosh\left(\frac{u}{3} \right)\;dw \,du.\end{aligned}}
\end{equation}

\begin{proof}

In the next proposition we show that the integral of a product of two K-Bessel functions  given on the right hand side of (\ref{SDSFirst Form}) can be expressed as an integral of a Gaussian hypergeometric function.

\begin{proposition}\label{identity1}
  For $m,n \in \mathbb R_{>0}$, $a \in \mathbb C$, and $s \in \mathbb C$ with $\text{\rm Re}(s) > |\text{\rm Re}(a)|$,
  
  \begin{align*}
    \int\limits_0^{\infty} K_a(my) K_a(ny) e^{-y} y^{2s}\, \frac{dy}{y} 
   &\; = \; \frac{\sqrt{\pi}}{2^{2s}} \frac{\Gamma(2s)^2}{\Gamma\left(2s + \frac{1}{2}\right)} \\ &
   \hskip -10pt
   \cdot \int\limits_0^{\infty} F\left(s + \frac{1}{2},s;2s + \frac{1}{2};1 - \alpha(u)^2\right) \cosh(au)\, du,
  \end{align*}
where $\alpha(u) = \Big(m^2 + n^2 + 2mn\cosh(u)\Big)^{\frac{1}{2}}$.
\end{proposition}

\begin{proof}
  This identity follows from an analogous argument to the one in the first part of the proof of Lemma 1 in \cite{MR1319517} because all of the identities that are used in that argument still hold, as does the way of combining them, with the only potential issue being the convergence of the integral on the right. We now show that the integrals on the left and right sides of the identity in Proposition \ref{identity1} converge absolutely if $\text{\rm Re}(s) > |\text{\rm Re}(a)|$.
  \begin{lemma}\label{identity2}
    For $m,n \in \mathbb R_{>0}$, $a \in \mathbb C$, and $y \in \mathbb R_{>0}$,
    \[
  K_a(my) K_a(ny) = \int\limits_0^{\infty} K_0\left(\Big(m^2 + n^2 + 2mn\cosh(u)\Big)^{\frac{1}{2}} y\right) \cosh(au)\, du.
\]
\end{lemma}
\begin{proof}
  First, use the identity
  \[
  K_r(X) K_r(x) = \frac{1}{2} \int\limits_{-\infty}^{\infty} \int\limits_0^{\infty} e^{-2rT - \frac{Xx}{v} \cosh(2T)} e^{-\left(\frac{v}{2} - \frac{X^2 + x^2}{2v}\right)} \,\frac{dv}{v}\, dT,
\]
which holds for any $r \in \mathbb C$ and $X,x \in \mathbb R_{>0}$ (see page 440 in \cite{MR0010746}), and set $X = my$, $x = ny$, and $r = a$, giving
\[
  K_a(my) K_a(ny) = \frac{1}{2} \int\limits_{-\infty}^{\infty} \int\limits_0^{\infty} e^{-2a T - \frac{mny^2}{v} \cosh(2T)} e^{-\left(\frac{v}{2} - \frac{\left(m^2 + n^2\right) y^2}{2v}\right)} \,\frac{dv}{v}\, dT.
\]
Now use the identity
\[
  K_r(z) = \frac{1}{2} \left(\frac{1}{2} z\right)^r \int\limits_0^{\infty} e^{-t - \frac{z^2}{4t}} \frac{dt}{t^{r + 1}},
\]
which holds for any $r \in \mathbb C$ and  $z \in \mathbb C$ with $\text{\rm Re}\left(z^2\right) > 0$ (see page 183 in \cite{MR0010746}), and set $z = \left(m^2 + n^2 + 2mn\cosh(u)\right)^{\frac{1}{2}} y$ and $r = 0$, giving
\[
  K_0\left(\left(m^2 + n^2 + 2mn\cosh(u)\right)^{\frac{1}{2}} y\right) = \frac{1}{2} \int\limits_0^{\infty} e^{-t - \frac{\left(m^2 + n^2 + 2mn\cosh(u)\right) y^2}{4t}} \,\frac{dt}{t}.
\]
Combining those two identities proves the lemma. 
\end{proof}

It follows from Lemma \ref{identity2} that
\begin{equation}\label{identity3}
  \int\limits_0^{\infty} K_a(my) K_a(ny) e^{-y} y^{2s} \frac{dy}{y} = \int\limits_0^{\infty} \int\limits_0^{\infty} K_0(\alpha(u) y) \cosh(au) e^{-y} y^{2s} \,du \frac{dy}{y},
\end{equation}
provided that the integrals converge.

We now verify that the integral on the left side of (\ref{identity3}) converges for $\text{\rm Re}(s) > |\text{\rm Re}(a)|$ as follows. For any $a \in \mathbb C$, $K_a(y)$ has the following asymptotic bounds (see page 227 in \cite{MR1325466}). 
 
 \begin{align*} & K_a(y) \ll y^{-\frac{1}{2}} e^{-y}\quad \big(\text{for} \; y > 1 + |a|^2\big).
 \\
 &
 \\
 & 
 K_a(y) \ll y^{|\text{\rm Re}(a)|} + y^{-|\text{\rm Re}(a)|}\quad \text{for}\; \big(y < 1 + |a|^2\big).
 \end{align*}

  Without loss of generality, suppose that $m \geq n$. Then
  \begin{align*}
    \int\limits_0^{\infty} K_a(my) K_a(ny) e^{-y} y^{2s} \,\frac{dy}{y}
  &
    \; \ll \; m^{|\text{\rm Re}(a)|} n^{|\text{\rm Re}(a)|} \int\limits_0^{\frac{1 + |a|^2}{m}} e^{-y} y^{2s + 2 |\text{\rm Re}(a)|} \,\frac{dy}{y} 
    \\ 
    &
    \hskip -74pt
    + \Big(m^{-|\text{\rm Re}(a)|} n^{|\text{\rm Re}(a)|} \;\, + \;\, m^{|\text{\rm Re}(a)|} n^{-|\text{\rm Re}(a)|}\Big) 
    \int\limits_0^{\frac{1 + |a|^2}{m}} e^{-y} y^{2s} \,\frac{dy}{y} 
    \\ 
    &
     \hskip -74pt
    + m^{-|\text{\rm Re}(a)|} n^{-|\text{\rm Re}(a)|} \int\limits_0^{\frac{1 + |a|^2}{m}} e^{-y} y^{2s - 2 |\text{\rm Re}(a)|} \frac{dy}{y} \;\,
    \\ &\hskip -70pt + \;\,  m^{-\frac{1}{2}} n^{|\text{\rm Re}(a)|} \int\limits_{\frac{1 + |a|^2}{m}}^{\frac{1 + |a|^2}{n}} e^{-(1 + m) y} y^{2s - \frac{1}{2} + |\text{\rm Re}(a)|} \,\frac{dy}{y}
     \\
     &
      \hskip -74pt
      + m^{-\frac{1}{2}} n^{-|\text{\rm Re}(a)|} \int\limits_{\frac{1 + |a|^2}{m}}^{\frac{1 + |a|^2}{n}} e^{-(1 + m) y} y^{2s - \frac{1}{2} - |\text{\rm Re}(a)|} \,\frac{dy}{y} 
    \;\, \\ &\hskip -74pt + \;\, m^{-\frac{1}{2}} n^{-\frac{1}{2}} \int\limits_{\frac{1 + |a|^2}{n}}^{\infty} e^{-(1 + m + n) y} y^{2s - 1} \,\frac{dy}{y},
  \end{align*}
which all converge if $\text{\rm Re}(s) \geq |\text{\rm Re}(a)|$.

Next, we make use of the Mellin transform
  \begin{align*}
    \int\limits_0^{\infty} e^{-ay} K_{\nu}(by) y^{s - 1} dy 
    &
    = \frac{\sqrt{\pi}}{(2a)^{s}} \left(\frac{b}{a}\right)^{\nu} \frac{\Gamma(s + \nu) \Gamma(s - \nu)}{\Gamma\left(s + \frac{1}{2}\right)} \\ 
    &
    \hskip 40pt
    \cdot F\left(\frac{s + \nu + 1}{2},\frac{s + \nu}{2};s + \frac{1}{2};1 - \left(\frac{b}{a}\right)^2\right),
  \end{align*}
which holds for any $\nu,a,b,s \in \mathbb C$ such that $\text{\rm Re}(s) > |\text{\rm Re}(\nu)|$ and $\text{\rm Re}(a + b) > 0$ (see page 331 in \cite{MR0061695}) and set $a = 1$, $b = \alpha(u)$, and $\nu = 0$ and substitute $2s$ for $s$. It follows that
\[
  \int\limits_0^{\infty} K_0(\alpha(u) y) e^{-y} y^{2s} \,\frac{dy}{y} =  \frac{\sqrt{\pi}}{ 2^{2s}} \frac{\Gamma(2s)^2}{\Gamma\left(2s + \frac{1}{2}\right)} \,F\left(s + \frac{1}{2},s,2s + \frac{1}{2};1 - \alpha(u)^2\right).
\]
Combining this with (\ref{identity3}) gives the formula in the statement of Proposition \ref{identity1}. In particular, if $a = \frac{1}{3}$, the identity holds for all $s \in \mathbb C$ with $\text{\rm Re}(s) > \frac{1}{3}$.
\end{proof}

We now use the following integral representation of the hypergeometric function (see \cite{MR1575118}).
\begin{proposition}\label{identity4} Fix $r>0.$ Then
  \[
  F(\alpha,\beta;\gamma;z) = \frac{\Gamma(\gamma)}{\Gamma(\alpha) \Gamma(\beta)} \cdot \frac{1}{2\pi i} \int\limits_{\text{\rm Re}(w) = -r} \frac{\Gamma(\alpha + w) \Gamma(\beta + w) \Gamma(-w)}{\Gamma(\gamma + w)} (-z)^w \, dw,
\]
where $|\arg(-z)| < \pi$  and $\text{\rm Re}(\alpha),\text{\rm Re}(\beta) > r$.
\end{proposition}

The formula for $S_d(s)$ given in (\ref{Sd(s)SecondForm})  immediately follows from Propositions \ref{identity1} and \ref{identity4} for $\text{\rm Re}(s) > 1+\varepsilon$ with the choice $r=1+\varepsilon.$ 

\end{proof}

\vskip 10pt 
 \noindent
 $\underline{\text{{\bf STEP 3}}}${\bf : We rewrite $S_d(s)$ using  Picard's integral representation of the Appell hypergeometric function.}
 \vskip 10pt
  
  In this step we complete the proof of the first part of Theorem \ref{Sd(s)Theorem}. 
It is helpful to introduce some additional notation. Let
 \begin{align*}
 Z_{d,\nu}(u) & = |\nu|^2+ |1+d\nu|^2-1\;+\;2|\nu|\cdot|1+d\nu| \cdot\cosh(u)\\
 & =  2|\nu|\cdot|1+d\nu| \cdot \Big(a_d(\nu) \;+\; \cosh(u)\Big),
 \end{align*}
 where $$a_d(\nu) := \;\frac{ |\nu|^2+ |1+d\nu|^2-1}{2|\nu|\cdot|1+d\nu|} = \frac{d^2+1}{2d}\bigg(1+\mathcal O_d\left(|\nu(1+dv)|^{-\frac12}\right)\bigg).$$

\begin{lemma} \label{CoshLemma} Let $a>1$ and $w\in\mathbb C$ with $\text{\rm Re}(w)<-\frac13.$ Then
\begin{align*}
& \int\limits_0^\infty \big(a + \cosh(u)\big)^w \cdot\cosh\left(\frac{u}{3}\right)\,du
 \\ &\hskip 80pt = \frac{(a+1)^{w}}{18w^2-2}\Bigg[(3-9w) \Phi_1(w,a) -(3+9w)\Phi_2(w,a)
 \Bigg],
 \end{align*}
 where
 $$\Phi_1(w,a) := F_1\left(1,\,-w,\,-w,\,-w+\frac23;\; \frac12-\frac12\sqrt{\frac{a-1}{a+1}}, \,  \frac12+\frac12\sqrt{\frac{a-1}{a+1}}  \right),$$
$$\Phi_2(w,a) := F_1\left(1,\,-w,\,-w,\,-w+\frac43;\; \frac12-\frac12\sqrt{\frac{a-1}{a+1}}, \,  \frac12+\frac12\sqrt{\frac{a-1}{a+1}}  \right).$$
\end{lemma} 
\begin{proof} By a Mathematica$^{TM}$ computation we have
\begin{align} \label{CoshIntegral}
& \int\limits_0^\infty \big(a + \cosh(u)\big)^w \cdot\cosh\left(\frac{u}{3}\right)\,du
 = \frac{2^{-w}}{18w^2-2}\\ &\cdot \Bigg[(3-9w) F_1\left(-\tfrac13-w, -w,-w,\tfrac23-w;\; -a+\sqrt{a^2-1}, \; \frac{1}{-a+\sqrt{a^2-1}}  \right)\nonumber\\
 &
 -(3+9w) F_1\left(\tfrac13-w,-w,-w,\tfrac43-w;\; -a+\sqrt{a^2-1}, \; \frac{1}{-a+\sqrt{a^2-1}}  \right)
 \Bigg].
 \nonumber
 \end{align}
 For $|z_1|,|z_2|<1$, we have the identity
 \begin{equation*}
   \boxed{\begin{aligned}\displaystyle &F_1(a,b_1,b_2,c;\;z_1,z_2) = (1-z_1)^{-b_1} (1-z_2)^{-b_2} \\ \displaystyle &\hskip 150pt \cdot F_1\left(c-a,b_1,b_2,c; \frac{z_1}{z_1-1},\;\frac{z_2}{z_2-1} \right).\end{aligned}}
 \end{equation*}
 It follows that
\begin{align*}
&
F_1\left(-\tfrac13-w,\,-w,\,-w,\, \tfrac23-w,\,-a+\sqrt{a^2-1},\, \frac{1}{-a+\sqrt{a^2-1}}    \right)\\
&
\hskip 30pt
= \big(2+2a\big)^{w}F_1\left(1,-w,-w,\tfrac23-w;\; \frac12 -\frac12\sqrt{\frac{a-1}{a+1}},\;  \frac12 +\frac12\sqrt{\frac{a-1}{a+1}} \right),\\
&
F_1\left(\tfrac13-w,\,-w,\,-w,\, \tfrac43-w,\,-a+\sqrt{a^2-1},\, \frac{1}{-a+\sqrt{a^2-1}}    \right)\\
&
\hskip 30pt
= \big(2+2a\big)^{w}F_1\left(1,-w,-w,\tfrac43-w;\; \frac12 -\frac12\sqrt{\frac{a-1}{a+1}},\;  \frac12 +\frac12\sqrt{\frac{a-1}{a+1}} \right).
\end{align*}
Inserting the above identities in (\ref{CoshIntegral}) completes the proof.
\end{proof}

\'Emile Picard \cite{MR1508705} proved the following integral representation of Appell's hypergeometric function (which is valid for $\text{Re}(c) > \text{Re}(\alpha) > 0$):
\begin{align*}&F_1(\alpha,\beta_1,\beta_2,c; z_1,z_2) \\ &\hskip 50pt = \frac{\Gamma(c)}{\Gamma(\alpha)\Gamma(c-\alpha)}\int\limits_0^1 t^{\alpha-1} (1-t)^{c-\alpha-1} (1-z_1t)^{-\beta_1}(1-z_2t)^{-\beta_2}\; dt.\end{align*}

Let 
$$z_1(a) := \frac12-\frac12\sqrt{\frac{a-1}{a+1}}, \qquad z_2(a) :=
\frac12+\frac12\sqrt{\frac{a-1}{a+1}}.$$ 

Note that
\begin{align*}
\big(1-z_1(a)\cdot t\big)\cdot\big(1-z_2(a)\cdot t\big) & = 1-t+\frac14\left(1 -\frac{a-1}{a+1}   \right)t^2\\
& = 1-t+ \frac{t^2}{2(a+1)}.
\end{align*}

 It follows that
\begin{align}\label{Phi-1-2}
\Phi_1(w,a) &:= (-w-\tfrac13)\int_0^1
(1-t)^{-w-\frac43} \left(1-t+\tfrac{t^2}{2(a+1)}   \right)^{w} dt,
\\
 \Phi_2(w,a) &:= (-w+\tfrac13)\int_0^1
(1-t)^{-w-\frac23} \left(1-t+\tfrac{ t^2}{2(a+1)}  \right)^{w} dt.
\nonumber\end{align}
The proof of the first part of Theorem \ref{Sd(s)Theorem} now follows from the definition of $S_d(s)$, Lemma \ref{CoshLemma}, and Picard's formulae 
(\ref{Phi-1-2}).

\vskip 15pt 
 \noindent
$\underline{\text{{\bf STEP 4}}}${\bf : The poles of $S_d(s)$.}
\vskip 5pt

The only pole of $\frac{\Gamma(2s) \Gamma\left(2s - \frac{2}{3}\right)}{\Gamma\left(2s + \frac{1}{6}\right)}$ with $\textup{Re}(s) > 0$ is a simple pole at $s = \frac{1}{3}$.\linebreak Because $\big\langle P_1(*,s), \;\theta\, \overline{\theta_d}\big\rangle$ does not have a pole at $s = \frac{1}{3}$, there is\linebreak no cancellation, so the poles of $S_d(s)$ with $\textup{Re}(s) > 0$ are precisely a simple pole at $s = \frac{1}{3}$ and the poles of $\big\langle P_1(*,s), \;\theta\, \overline{\theta_d}\big\rangle$ with $\textup{Re}(s) > 0$ with the same order as for that function. The computation of the residue of $S_d(s)$ at each of those poles is then an immediate consequence of the formula.
 \end{proof}

\vskip 15pt
\section{Relating $S_d(s)$ to $L_d^\#(s) $}
\label{RelatingSdSSection}

The function
\begin{align*} &
S_d(s)  =  \frac{2^{-6s-1}\pi^{-2s+\frac12}\;\Gamma(2s)^2}{2\pi i\;\Gamma(s+\frac12)\,\Gamma(s)}  \hskip-5pt\int\limits_{\text{\rm Re}(w)=-1-\varepsilon} \hskip-12pt
 \frac{\Gamma(s+\frac12+w) \Gamma(s+w) \Gamma(-w)}{\Gamma(2s+\frac12+w)} 
 \\ &
 \hskip 100pt 
 \cdot \sum_{\nu\in\lambda^{-3}\mathcal O_K} \frac{\tau(\nu) \overline{ \tau(1+d\nu)}}{\big(2|\nu|\cdot|1+d\nu|\big)^{-w} } 
 \\ &
 \hskip 55pt 
 \cdot \int\limits_0^1 \left(t^{-\frac43-w} +t^{-\frac23-w}\right) \left(\big(a_d(\nu)+1\big)\cdot t+\tfrac{ (1-t)^2}{2}  \right)^{w} dt\,dw
   \end{align*}
that occurs on the geometric side of the trace formula (see Theorem \ref{Sd(s)Theorem}) contains the coefficients $\tau(\nu)\overline{\tau(1+d\nu)}$ which occur in the Dirichlet series expansions of the cubic Pell equation L-functions $L_d(s)$ and $L_d^\#(s)$. The integral in $w$ above can be evaluated by shifting the line of integration to the left and picking up residues at $w= -s-\frac12, -s$ which leads to a proof of  the following theorem.

  \begin{theorem} {\bf (Relating $S_d(s)$ to $L_d^\#(s)$)}
  \label{RelatingSd(s)} Fix $\varepsilon>0$ sufficiently small.   Then
  \begin{align*}
  &S_d(s) \,=\,  \frac{2^{-7s-1}}{\pi^{2s-1}}\frac{\Gamma(2s)^2}{\Gamma(s+\frac12)^2}\cdot L_d^\#(s)\; 
  -\;
  \frac{2^{-7s+\frac12}}{\pi^{2s-1}}\frac{\Gamma(2s)^2} {\Gamma(s)^2}\cdot  L_d^{\#}(s+\tfrac12) \; \\ &\hskip 250pt + \;\mathcal O\bigg(|s|^{2\text{\rm Re}(s)+3} e^{-\pi|s|}   \bigg).
  \end{align*}
  The function $L_d^\#(s)$ has meromorphic continuation to $s\in\mathbf C$ with $\text{\rm Re}(s) > \frac13$ and satisfies
    $$|L_d^\#(s)| \ll_\varepsilon \, |s|^3$$
    for $\frac13+\varepsilon < \text{\rm Re}(s)<1$ provided $|s-\rho|>\varepsilon$  for any pole $\rho\in\mathbb C$ of $S_d(s)$. 
    
    \vskip 5pt
    The poles of $L_d^{\#}(s)$ with $\textup{Re}(s) >\frac13$ are precisely the poles of $S_d(s)$ (see Theorem \ref{Sd(s)Theorem}) with each having the same order as the corresponding pole of $S_d(s)$.
    \end{theorem}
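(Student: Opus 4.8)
The plan is to start from the Mellin--Barnes representation of $S_d(s)$ displayed at the head of this section and to shift the $w$-contour from $\text{\rm Re}(w)=-1-\varepsilon$ leftward, collecting residues at the poles of the gamma factors. For $\text{\rm Re}(s)>1+\varepsilon$ the two poles nearest to the original line are the simple pole of $\Gamma(s+w)$ at $w=-s$ and the simple pole of $\Gamma\left(s+\tfrac12+w\right)$ at $w=-s-\tfrac12$; shifting just past these two (and no other) poles, to a line $\text{\rm Re}(w)=-\text{\rm Re}(s)-\tfrac12-\delta$ with $0<\delta<\tfrac12$, Stirling's formula shows the shifted integral converges. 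The first point to record is that inside each residue the inner $t$-integral collapses to a Picard integral: at $w=-s$ the exponents $-\tfrac43-w,\,-\tfrac23-w$ become $s-\tfrac43,\,s-\tfrac23$, so the $t$-integral is exactly $\mathcal F\big(s,\,a_d(\nu)+1\big)$, while at $w=-s-\tfrac12$ it is $\mathcal F\big(s+\tfrac12,\,a_d(\nu)+1\big)$. Evaluating the two gamma residues, namely $\tfrac{\Gamma(1/2)\Gamma(s)}{\Gamma(s+1/2)}$ at $w=-s$ and $\tfrac{\Gamma(-1/2)\Gamma(s+1/2)}{\Gamma(s)}$ at $w=-s-\tfrac12$, and folding them into the prefactor produces the two coefficients $\tfrac{2^{-7s-1}}{\pi^{2s-1}}\tfrac{\Gamma(2s)^2}{\Gamma(s+1/2)^2}$ and $-\tfrac{2^{-7s+1/2}}{\pi^{2s-1}}\tfrac{\Gamma(2s)^2}{\Gamma(s)^2}$ displayed in the theorem.

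The second step is to expand each Picard integral as a Taylor series in its second argument about the fixed point $x_0:=\tfrac{(d+1)^2}{2d}$, using $a_d(\nu)+1-x_0=a_d(\nu)-\tfrac{d^2+1}{2d}=\mathcal O_d\big(|\nu(1+d\nu)|^{-1/2}\big)$. Differentiating under the integral sign gives the identity $\partial_x\mathcal F(s,x)=-s\,\mathcal F(s+1,x)$, and more generally $\partial_x^k\mathcal F(s,x)=(-1)^k\tfrac{\Gamma(s+k)}{\Gamma(s)}\mathcal F(s+k,x)$. Hence at $w=-s$ the $k=0$ and $k=1$ terms, summed against $\tau(\nu)\overline{\tau(1+d\nu)}|\nu(1+d\nu)|^{-s}$, combine into $\mathcal F(s,x_0)L_d(s)-s\,\mathcal F(s+1,x_0)L_d^*(s)=L_d^\#(s)$ by the very definitions of $L_d$, $L_d^*$ and $L_d^\#$; the analogous $k=0,1$ terms at $w=-s-\tfrac12$ assemble into $L_d^\#\left(s+\tfrac12\right)$. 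This is precisely the mechanism that pulls the common factor $\mathcal F(s,x_0)$ out of the $\nu$-sum.

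The remaining contributions must be shown to fit into the error $\mathcal O\big(|s|^{2\text{\rm Re}(s)+3}e^{-\pi|s|}\big)$, and this is where I expect the main difficulty to lie. There are two sources. First, the tail $k\ge2$ of each Taylor expansion carries a factor $\big(a_d(\nu)-\tfrac{d^2+1}{2d}\big)^k=\mathcal O_d\big(|\nu(1+d\nu)|^{-k/2}\big)$, so the associated Dirichlet series converges absolutely for $\text{\rm Re}(s)>1-\tfrac{k}{2}$, hence throughout the region once $k\ge2$; together with the formula for $\partial_x^k\mathcal F(s,x_0)$ and the steepest-descent estimate $\mathcal F(s+k,x_0)\ll_d|s|^{-1/2}$ of Lemma \ref{SteepestDescent}, these sum to something safely inside the error once the gamma prefactor is restored. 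Second, the integral on the shifted line is bounded by Stirling applied to the gamma ratio together with the now absolutely convergent $\nu$- and $t$-integrations. The genuinely delicate points are the rigorous interchange of the $\nu$-sum with the $w$- and $t$-integrations, the verification that $a_d(\nu)+1$ stays within the radius of convergence of the expansion of $\mathcal F(s,\cdot)$ about $x_0$ uniformly in $\nu$, and the uniform control of $\mathcal F$ and its $x$-derivatives in $s$ via steepest descent; these together govern the polynomial power in the error.

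Finally, the continuation, the pole structure, and the growth of $L_d^\#$ all follow by solving the relation of the theorem for $L_d^\#(s)$, i.e.\ by dividing by $c_1(s):=\tfrac{2^{-7s-1}}{\pi^{2s-1}}\tfrac{\Gamma(2s)^2}{\Gamma(s+1/2)^2}$, which is holomorphic and nonvanishing for $\text{\rm Re}(s)>0$. Since $L_d^\#\left(s+\tfrac12\right)$ is already known one strip to the right and $S_d$ is meromorphic on $\text{\rm Re}(s)>0$ by Theorem \ref{Sd(s)Theorem}, one bootstraps downward in strips of width $\tfrac12$, reaching $\text{\rm Re}(s)>\tfrac13$, which is exactly the abscissa of convergence of the Picard integral $\mathcal F(s,x_0)$. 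Because the poles of $S_d$ all have real part in $\{\tfrac13,\tfrac12,\tfrac23\}$, the poles of $L_d^\#\left(s+\tfrac12\right)$ occur only for $\text{\rm Re}(s)\le\tfrac16$ and never enter $\text{\rm Re}(s)>\tfrac13$; consequently the poles of $L_d^\#$ there coincide, with multiplicity, with those of $S_d$. The bound $|L_d^\#(s)|\ll_\varepsilon|s|^3$ then results from pairing the $e^{-\pi|s|}$ decay carried by $S_d$ and by the error term against the $e^{\pi|s|}$ growth of $c_1(s)^{-1}$, together with the steepest-descent factor $|s|^{-1/2}$ for $\mathcal F$, so that only polynomial growth survives.
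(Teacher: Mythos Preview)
Your overall architecture matches the paper: shift the $w$-contour, identify the two residues at $w=-s$ and $w=-s-\tfrac12$ with Picard integrals $\mathcal F\big(s,a_d(\nu)+1\big)$ and $\mathcal F\big(s+\tfrac12,a_d(\nu)+1\big)$, Taylor-expand about $x_0=\tfrac{(d+1)^2}{2d}$ so that the $k=0,1$ terms assemble into $L_d^\#$, and bootstrap the continuation of $L_d^\#$ from that of $S_d$. The pole analysis and the final bootstrap are essentially as in the paper.

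There is, however, a genuine gap in your treatment of the tail $k\ge 2$. You write that the $k$-th term carries $\big(a_d(\nu)-\tfrac{d^2+1}{2d}\big)^k=\mathcal O_d\big(|\nu(1+d\nu)|^{-k/2}\big)$ and hence the attached Dirichlet series converges for each $k\ge 2$; this is true, but it does not give a bound uniform in $s$. The $k$-th term also carries the binomial coefficient $\binom{-s}{k}$, whose modulus grows like $|s|^k/k!$, while the $\nu$-sum contains small $\nu$ for which $\big|a_d(\nu)-\tfrac{d^2+1}{2d}\big|$ is only $\ll_d 1$. Summing $\sum_{k\ge 2}\binom{|s|+k-1}{k}c_d^{-k}$ (with $c_d>1$ coming from the $t$-integral) is of order $e^{|s|/c_d}$, i.e.\ \emph{exponential} in $|s|$. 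So your tail estimate cannot land inside the claimed error $\mathcal O\big(|s|^{2\text{Re}(s)+3}e^{-\pi|s|}\big)$, and the bound $|L_d^\#(s)|\ll|s|^3$ would not follow. The paper flags exactly this issue at the end of its Step~5.

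The missing idea is to split the $\nu$-sum at $|\nu|=|s|$. For $|\nu|>|s|$ one has $\big|a_d(\nu)-\tfrac{d^2+1}{2d}\big|\ll|s|^{-1}$, so the Taylor tail becomes $\sum_{k\ge 2}\binom{|s|+k-1}{k}(c_d|s|)^{-k}$, which is bounded (it tends to $e^{1/c_d}$), and only then does the $k\ge 2$ contribution stay $\ll|s|^2$. For $|\nu|\le|s|$ one does \emph{not} expand: there are $\ll|s|^2$ such $\nu$, each summand is bounded, and the contribution is again $\ll|s|^2$ (see Propositions~\ref{LsharpPrime} and~\ref{R1BoundProp}). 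Once this split is inserted, the rest of your outline goes through.
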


 \vskip 5pt
\begin{remark} 
     At each of the poles $\rho$ of $S_d(s)$, the residue of $L_d^{\#}(s)$ at that pole is 
     $$\underset{s=\rho}{\text{\rm Res}}\;L_d^{\#}(s) = \frac{\pi^{2s - 1}}{2^{-7s - 1}}\frac{\Gamma\left(s + \frac{1}{2}\right)^2}{\Gamma(2s)^2} \cdot \underset{s=\rho}{\text{\rm Res}}\; S_d(s).$$ 
\end{remark}

\begin{proof}

 The proof of Theorem \ref{RelatingSd(s)} will be given in several  steps.

\vskip 15pt 
 \noindent
 $\underline{\text{{\bf STEP 1}}}${\bf : We shift the line of integration in the $w$-integral of $S_d(s)$ obtaining two residue terms and a shifted integral term.}
 \vskip 5pt
 \begin{proposition} \label{SdSIdentity3}
Fix $\varepsilon>0$ sufficiently small. Let $s\in\mathbb C$ with $1+\varepsilon<\text{\rm Re}(s) < 1+2\varepsilon.$ Then we have
$$S_d(s) = \mathcal R_1(s) + \mathcal R_2(s) + I(s)$$
 where $\mathcal R_1(s)$, $\mathcal R_1(s),$ and $\mathcal I(s)$ are holomorphic  for $1+\varepsilon<\text{\rm Re}(s) < 1+2\varepsilon$ and are given by
  \begin{align*} 
 \mathcal R_1(s) & := 
   \frac{2^{-6s-1}}{\pi^{2s-1}}\frac{\Gamma(2s)^2}{\Gamma(s+\frac12)^2} \sum_{\nu\in\lambda^{-3}\mathcal O_K} \frac{\tau(\nu) \overline{\tau(1+d\nu)}}{\big(2|\nu|\cdot|1+d\nu| \big)^s}
\\
&
\hskip 68pt
\cdot \int_0^1\left( t^{-\frac43+s} + t^{-\frac23+s}  \right) \left(\big(a_d(\nu)+1\big)\cdot t   +\tfrac{(1-t)^2}{2}\right)^{-s} dt,
\end{align*}
\begin{align*}
 \mathcal R_2(s) &:= -\frac{2^{-6s}}{\pi^{2s-1}}\frac{\Gamma(2s)^2}{\Gamma(s)^2} \sum_{\nu\in\lambda^{-3}\mathcal O_K} 
 \frac{\tau(\nu) \overline{\tau(1+d\nu)}}{\big(2|\nu|\cdot|1+d\nu|\big)^{s+\frac12}}
 \\
&
\hskip 60pt
\cdot \int_0^1 \left( t^{-\frac56+s} + t^{-\frac16+s}  \right) \left((a_d(\nu)+1)\cdot t   +\tfrac{(1-t)^2}{2}\right)^{-s-\frac12} dt,
\end{align*}
\begin{align*}
\mathcal I(s) 
&:= \frac{2^{-6s}\pi^{-2s+\frac12}\;\Gamma(2s)^2}{4\pi i\;\Gamma(s+\frac12)\,\Gamma(s)}  \int\limits_{\text{\rm Re}(w)=-\frac32-3\varepsilon} \hskip-13pt
 \frac{\Gamma(s+\frac12+w) \Gamma(s+w) \Gamma(-w)}{\Gamma(2s+\frac12+w)} \\ &\hskip 53pt \cdot \sum_{\nu\in\lambda^{-3}\mathcal O_K} \frac{\tau(\nu) \overline{ \tau(1+d\nu)}}{\big(2|\nu|\cdot|1+d\nu|\big)^{-w} }\\
 &
 \hskip13pt
 \cdot \int_0^1 \left(t^{-\frac43-w} +t^{-\frac23-w}\right) \left(\big(a_d(\nu)+1\big)\cdot t+\tfrac{ (1-t)^2}{2}  \right)^{w} dt\,dw.
 \end{align*}

\end{proposition}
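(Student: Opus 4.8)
The plan is to read off the decomposition $S_d(s)=\mathcal R_1(s)+\mathcal R_2(s)+\mathcal I(s)$ by displacing the inner $w$-contour in the formula for $S_d(s)$ from $\text{\rm Re}(w)=-1-\varepsilon$ leftward to $\text{\rm Re}(w)=-\tfrac32-3\varepsilon$ and collecting the residues crossed along the way. Abbreviate the $w$-integrand as
\begin{align*}
g(w):={}&\frac{\Gamma(s+\tfrac12+w)\,\Gamma(s+w)\,\Gamma(-w)}{\Gamma(2s+\tfrac12+w)}\\
&\cdot\sum_{\nu\in\lambda^{-3}\mathcal O_K}\frac{\tau(\nu)\overline{\tau(1+d\nu)}}{\big(2|\nu|\,|1+d\nu|\big)^{-w}}\int_0^1\Big(t^{-\frac43-w}+t^{-\frac23-w}\Big)\Big((a_d(\nu)+1)t+\tfrac{(1-t)^2}{2}\Big)^{w}dt,
\end{align*}
so that $S_d(s)=\tfrac{2^{-6s-1}\pi^{-2s+\frac12}\Gamma(2s)^2}{\Gamma(s+\frac12)\Gamma(s)}\cdot\tfrac1{2\pi i}\int_{\text{\rm Re}(w)=-1-\varepsilon}g(w)\,dw$. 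For $s$ in the strip $1+\varepsilon<\text{\rm Re}(s)<1+2\varepsilon$, the only poles of $g$ with $-\tfrac32-3\varepsilon<\text{\rm Re}(w)<-1-\varepsilon$ are the simple pole of $\Gamma(s+w)$ at $w=-s$ (whose real part lies in $(-1-2\varepsilon,-1-\varepsilon)$) and the simple pole of $\Gamma(s+\tfrac12+w)$ at $w=-s-\tfrac12$ (whose real part lies in $(-\tfrac32-2\varepsilon,-\tfrac32-\varepsilon)$); the poles of $\Gamma(-w)$ all have $\text{\rm Re}(w)\ge0$, and the next poles of the two numerator Gamma factors have real parts below $-2$ and $-\tfrac52$ respectively, hence outside the strip. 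Cauchy's theorem then yields $\tfrac1{2\pi i}\int_{(-1-\varepsilon)}g=\tfrac1{2\pi i}\int_{(-\frac32-3\varepsilon)}g+\text{\rm Res}_{w=-s}g+\text{\rm Res}_{w=-s-\frac12}g$.

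The residue computations are mechanical: since $\Gamma$ has residue $1$ at each of its poles, setting $w=-s$ replaces the Gamma ratio by $\tfrac{\Gamma(\frac12)\Gamma(s)}{\Gamma(s+\frac12)}$ and shifts the exponents of $t$ and of $2|\nu||1+d\nu|$ to those appearing in $\mathcal R_1$; multiplying by the prefactor and using $\Gamma(\tfrac12)=\sqrt\pi$ collapses the Gamma and $\pi$ factors to $\tfrac{2^{-6s-1}}{\pi^{2s-1}}\tfrac{\Gamma(2s)^2}{\Gamma(s+\frac12)^2}$, giving exactly $\mathcal R_1(s)$. Likewise $w=-s-\tfrac12$ produces the factor $\tfrac{\Gamma(-\frac12)\Gamma(s+\frac12)}{\Gamma(s)}$, and $\Gamma(-\tfrac12)=-2\sqrt\pi$ supplies both the overall minus sign and the constant $-\tfrac{2^{-6s}}{\pi^{2s-1}}\tfrac{\Gamma(2s)^2}{\Gamma(s)^2}$ of $\mathcal R_2(s)$, with all exponents shifted by $\tfrac12$ as recorded. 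The remaining shifted integral, together with the prefactor, is precisely $\mathcal I(s)$.

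The real content is the justification of the shift and the attendant interchanges, and this is where I expect the main obstacle to lie. First I would show that $g(w)$ decays exponentially in $|\text{\rm Im}(w)|$: applying Stirling's bound (\ref{StirlingBound}) to the four Gamma factors shows the ratio $\tfrac{\Gamma(s+\frac12+w)\Gamma(s+w)\Gamma(-w)}{\Gamma(2s+\frac12+w)}$ is $\ll e^{-\pi|\text{\rm Im}(w)|}$ up to a polynomial factor, uniformly for $\text{\rm Re}(w)$ in the compact band between the two contours and $s$ in the strip; this forces the horizontal connecting segments at $\text{\rm Im}(w)\to\pm\infty$ to vanish and renders all three $w$-integrals absolutely convergent. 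Next I would pull the $\nu$-sum through the integral: on the new line $\text{\rm Re}(w)=-\tfrac32-3\varepsilon$ the inner $t$-integral converges at $t=0$ because $\text{\rm Re}(-\tfrac43-w)=\tfrac16+3\varepsilon>-1$, while the base $\tfrac{t^2}{2}+a_d(\nu)t+\tfrac12$ stays between fixed positive constants on $[0,1]$ (its parabola has vertex $t=-a_d(\nu)$ lying to the left of $0$, and $a_d(\nu)\to\tfrac{d^2+1}{2d}$ is bounded), so the $t$-integral is $O(1)$ uniformly in $\nu$; the $\nu$-sum is then dominated by $\sum_\nu|\tau(\nu)\overline{\tau(1+d\nu)}|\,(2|\nu||1+d\nu|)^{-\frac32-3\varepsilon}$, which converges by the Cauchy--Schwarz estimate of Proposition \ref{Ld(s)convergence} (indeed more comfortably than at the edge $\text{\rm Re}(s)=1$). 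The combined exponential decay in $w$ and absolute convergence in $\nu$ justify Fubini and legitimize the residue calculus. Finally, holomorphy of $\mathcal R_1,\mathcal R_2,\mathcal I$ on the strip is clear, since the displaced poles $-s,-s-\tfrac12$ never meet the contour there, the Gamma values $\Gamma(\tfrac12),\Gamma(s),\Gamma(s+\tfrac12),\Gamma(-\tfrac12)$ are finite, and the series and integrals converge locally uniformly. Thus the essential difficulty is the uniform control needed to move the contour across infinity, namely packaging the Stirling decay together with the $\nu$-sum bound so that Fubini applies; once that is in place, the residue bookkeeping delivers the stated $\mathcal R_1$, $\mathcal R_2$, and $\mathcal I$.
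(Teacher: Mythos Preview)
Your proposal is correct and follows exactly the same approach as the paper: shift the $w$-contour from $\text{Re}(w)=-1-\varepsilon$ to $\text{Re}(w)=-\tfrac32-3\varepsilon$, pick up the residues at $w=-s$ and $w=-s-\tfrac12$, and identify them as $\mathcal R_1(s)$ and $\mathcal R_2(s)$. Your write-up is in fact more thorough than the paper's own proof, which does not spell out the residue arithmetic or the Stirling-based justification for moving the contour; the paper simply asserts that these are the only poles crossed and that the $\nu$-sum and $t$-integral converge, citing Proposition~\ref{Ld(s)convergence} and the fact that $a_d(\nu)+1\sim\tfrac{(d+1)^2}{2d}>1$.
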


\begin{proof} Shift the line of integration in the $w$-integral of $S_d(s)$ given in Theorem \ref{Sd(s)Theorem} to the line $\text{\rm Re}(w) = -\frac32-3\varepsilon.$ Since we are assuming that $1+\varepsilon<\text{\rm Re}(s) < 1+2\varepsilon$ we will only pass the two poles of $$\frac{\Gamma(s+\frac12+w) \Gamma(s+w) \Gamma(-w)}{\Gamma\left(2s+\frac12+w\right)}$$ at $w=-s$ and $w=-s-\frac12.$ The residues at these poles are given by $\mathcal R_1(s), \mathcal R_2(s)$, and by Cauchy's residue theorem, the original integral is the sum of the two residues plus the shifted integral $\mathcal I(s).$ 

Note that the sum $$\sum_{\nu\in\lambda^{-3}\mathcal O_K}\frac{ \tau(\nu) \overline{\tau(1+d\nu)}}{ \big(2|\nu|\cdot|1+d\nu|\big)^{-w} }$$ converges absolutely for $\text{\rm Re}(w) <-1$ by Proposition \ref{Ld(s)convergence}. Also Since $a_d(\nu)+1\sim \frac{(d+1)^2}{2d}>1$ it follows 
 that
 the $t$-integral in $\mathcal I(s)$ above is bounded 
 for any $w\in\mathbb C$ with $\text{\rm Re}(w) =-\frac32-3\varepsilon.$ One easily concludes that $\mathcal R_1(s), \mathcal R_2(s),$ and $\mathcal I(s)$ are holomorphic functions for $s\in\mathbb C$ with $1+\varepsilon<\text{\rm Re}(s) < 1+2\varepsilon$ for .
\end{proof}
\vskip 10pt 
 \noindent
 $\underline{\text{{\bf STEP 2}}}${\bf : Holomorphic continuation and bounds for $\mathcal R_2(s)$ and $\mathcal I(s)$ in the region $\boldmath  \frac12+4\varepsilon<\text{\rm \bf Re}(s) < 1+2\varepsilon.$ }
 \vskip 5pt

 \begin{proposition}\label{I(s)Continuation1} The  residue  $\mathcal R_2(s)$ and the integral $\mathcal I(s)$ are holomorphic functions  and satisfy the bounds
$$\left|\mathcal R_2(s)\right| \ll |s|^{2\text{\rm Re}(s)}  e^{-\pi|s|}, \qquad\quad |\mathcal I(s)| \ll_\varepsilon |s|^{\frac32+7\varepsilon}\, e^{-\pi|s|},$$
for $s\in\mathbb C$ with   $\frac12+4\varepsilon <\text{\rm Re}(s) < 1+2\varepsilon$
 \end{proposition}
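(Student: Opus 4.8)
The plan is to establish holomorphic continuation and the stated bounds for $\mathcal{R}_2(s)$ and $\mathcal{I}(s)$ separately, using different techniques for each. For $\mathcal{R}_2(s)$, the key observation is that the Dirichlet series $\sum_{\nu} \tau(\nu)\overline{\tau(1+d\nu)} \big(2|\nu||1+d\nu|\big)^{-s-\frac12}$ converges absolutely for $\mathrm{Re}(s+\tfrac12) > 1$, i.e. for $\mathrm{Re}(s) > \tfrac12$, by Proposition \ref{Ld(s)convergence}. First I would verify that the $t$-integral $\int_0^1 (t^{-\frac56+s}+t^{-\frac16+s})((a_d(\nu)+1)t + \tfrac{(1-t)^2}{2})^{-s-\frac12}\,dt$ is uniformly bounded in $\nu$ for $\mathrm{Re}(s)$ in the stated range: since $a_d(\nu)+1 \geq \tfrac{(d+1)^2}{2d} > 1$, the base of the final factor is bounded below away from zero on $[0,1]$, and the factors $t^{-\frac56+s}$, $t^{-\frac16+s}$ are integrable once $\mathrm{Re}(s) > \tfrac12$ (so that $\mathrm{Re}(-\tfrac56+s) > -1$). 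Hence the sum times the $t$-integral extends holomorphically to $\mathrm{Re}(s) > \tfrac12$. The size bound $|s|^{2\mathrm{Re}(s)} e^{-\pi|s|}$ then comes entirely from the Gamma-factor prefactor $\frac{\Gamma(2s)^2}{\Gamma(s)^2}$ via Stirling's bound (\ref{StirlingBound}); the remaining factors in $\nu$ and $t$ contribute $O_\varepsilon(1)$.

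For $\mathcal{I}(s)$, the situation is analogous but requires tracking the shifted contour $\mathrm{Re}(w) = -\tfrac32 - 3\varepsilon$. Here I would argue that on this line the $\nu$-sum converges absolutely since $\mathrm{Re}(-w) = \tfrac32 + 3\varepsilon > 1$, and the $t$-integral is again uniformly bounded in $\nu$ (the factor $t^{-\frac43-w}$ has real exponent $-\tfrac43 + \tfrac32 + 3\varepsilon = \tfrac16 + 3\varepsilon > -1$, hence integrable, and the base $(a_d(\nu)+1)t + \tfrac{(1-t)^2}{2}$ is again bounded below). The holomorphy in $s$ for $\tfrac12 + 4\varepsilon < \mathrm{Re}(s) < 1 + 2\varepsilon$ follows because along the fixed line $\mathrm{Re}(w) = -\tfrac32-3\varepsilon$ none of the Gamma functions $\Gamma(s+\tfrac12+w)$, $\Gamma(s+w)$, $\Gamma(2s+\tfrac12+w)^{-1}$ encounters a pole when $\mathrm{Re}(s)$ stays in this window (one checks $\mathrm{Re}(s+w) = \mathrm{Re}(s) - \tfrac32 - 3\varepsilon > -1 + \varepsilon$ and similarly for the others). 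The size bound is then obtained by estimating the $w$-integral with Stirling: writing $w = -\tfrac32 - 3\varepsilon + iv$, the prefactor $\frac{\Gamma(2s)^2}{\Gamma(s+\frac12)\Gamma(s)}$ contributes the polynomial growth and an $e^{-\pi|s|}$ from the exponential part of Stirling, while the ratio $\frac{\Gamma(s+\frac12+w)\Gamma(s+w)\Gamma(-w)}{\Gamma(2s+\frac12+w)}$ decays fast enough in $v$ to make the $v$-integral converge and contribute the $|s|^{\frac32+7\varepsilon}$ factor.

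The main obstacle I anticipate is the precise bookkeeping of the exponent $\tfrac32 + 7\varepsilon$ in the bound for $\mathcal{I}(s)$. This requires carefully combining the polynomial orders from all the Gamma factors. The prefactor $\frac{\Gamma(2s)^2}{\Gamma(s+\frac12)\Gamma(s)}$ has order roughly $|s|^{2\mathrm{Re}(s) - \frac12}$ by Stirling (using $\Gamma(2s)^2 \sim |s|^{4\mathrm{Re}(s)-1}e^{-2\pi|s|}$ and $\Gamma(s+\tfrac12)\Gamma(s) \sim |s|^{2\mathrm{Re}(s)-\frac12}e^{-\pi|s|}$, giving net $|s|^{2\mathrm{Re}(s)-\frac12}e^{-\pi|s|}$). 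The integral over $w$ along $\mathrm{Re}(w) = -\tfrac32-3\varepsilon$, after factoring out the $s$-dependence at the center of the critical strip, must then be shown to contribute the correct additional powers of $|s|$. The delicate point is that when $v = \mathrm{Im}(w)$ ranges over a window of size comparable to $|\mathrm{Im}(s)|$, the integrand does not decay, so the integral picks up a factor proportional to $|s|$, and the combination with the prefactor must be arranged to land exactly on $\tfrac32 + 7\varepsilon$. I would handle this by splitting the $v$-integral into the region $|v| \lesssim |\mathrm{Im}(s)|$, where the Gamma ratio is estimated by its polynomial Stirling order, and the region $|v| \gtrsim |\mathrm{Im}(s)|$, where the factor $\Gamma(-w)$ forces rapid decay; the dominant contribution comes from the former and yields the claimed exponent.
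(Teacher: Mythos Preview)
Your proposal is correct and follows essentially the same approach as the paper. For $\mathcal R_2(s)$ the argument is identical: uniform boundedness of the $t$-integral from $a_d(\nu)+1>1$, absolute convergence of the $\nu$-sum for $\mathrm{Re}(s)>\tfrac12$, and Stirling on $\Gamma(2s)^2/\Gamma(s)^2$. For $\mathcal I(s)$ the paper likewise reduces to Stirling on the Gamma ratio in the integrand and then estimates the resulting $v$-integral; the only difference is that instead of your two-region split $|v|\lesssim |t|$ versus $|v|\gtrsim |t|$, the paper makes the substitution $u=v+t$ and breaks the $u$-line into four pieces $[0,t]$, $[t,\infty)$, $[-t,0]$, $(-\infty,-t]$, with the dominant contribution $(1+t)^{\frac52-2\sigma+7\varepsilon}$ coming from $u$ near $0$ (i.e.\ $v$ near $-t$), not from the full window where the exponential is flat. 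Your heuristic that the $v$-integral ``picks up a factor proportional to $|s|$'' is not quite right---the integral actually contributes a $\sigma$-dependent power $|s|^{\frac52-2\sigma+O(\varepsilon)}$ which cancels against the $\sigma$-dependence of the prefactor---so the finer four-case split is what makes the bookkeeping you flag as delicate go through.
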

 
 \begin{proof} 
 By Stirling's asymptotic formula
$$
  \left|\frac{\Gamma(2s)^2}{\Gamma\left(s\right)^2}\right| \ll |s|^{2\text{\rm Re}(s)}e^{-\pi|s|}
$$
for  $\frac12+4\varepsilon <\text{\rm Re}(s) < 1+2\varepsilon$.
Since $a_d(\nu)+1\sim \frac{(d+1)^2}{2d}>1$ it follows 
 that
 the $t$-integral in $\mathcal R_2(s)$ is bounded by an absolute constant
 for any $s\in\mathbb C$ with $\frac12+4\varepsilon <\text{\rm Re}(s) < 1+2\varepsilon$ and any $w\in\mathbb C$ with $\text{\rm Re}(w) =-\frac32-3\varepsilon.$
It immediately follows that $\left|\mathcal R_2(s)\right| \ll |s|^{2\text{\rm Re}(s)}  e^{-\pi|s|}$ in this region.

\vskip 10pt

We have shown in Proposition \ref{SdSIdentity3}  that $\mathcal I(s)$ is  holomorphic in the region $1+\varepsilon<\text{\rm Re}(s) < 1+2\varepsilon$.
 Note that for $\text{\rm Re}(w) = -\frac32-3\varepsilon$ the ratio of Gamma functions
 $$\frac{\Gamma(s+\frac12+w) \Gamma(s+w) \Gamma(-w)}{\Gamma\left(2s+\frac12+w\right)}$$
 is a holomorphic  in the region  $\frac12+4\varepsilon <\text{\rm Re}(s) < 1+2\varepsilon$. We now show that the integral $\mathcal I(s)$ converges absolutely in this region and, as a consequence,  obtain a sharp bound for its growth in this region.

Further, for $w=-\frac32-3\varepsilon +iv$ and $s=\sigma+it$ with $v,t\in\mathbb R$,  Stirling's asymptotic formula implies that   for $\frac12+4\varepsilon <\sigma <1+\varepsilon$ we have
\begin{align*}
  &\left|\frac{\Gamma(s+\frac12+w) \Gamma(s+w) \Gamma(-w)}{\Gamma(2s+\frac12+w)}\right| \\ &\hskip 80pt \;\ll \;\frac{(1 + |t + v|)^{2\sigma - \frac{7}{2} - 6\varepsilon} (1 + |v|)^{1 + 3\varepsilon}}{ (1 + |2t + v|)^{2\sigma - \frac{3}{2} - 3\varepsilon}}\cdot  e^{-\frac{\pi}{2} \big(2|t + v| + |v| - |2t + v|\big)}.
\end{align*}
Let
\begin{equation}\label{I(s)}
\mathcal I(s) = \frac{\pi^{-2s+1}}{2^{6s+1}}\frac{\Gamma(2s)^2}{\Gamma(s+\frac12)\Gamma(s)}\cdot \mathcal I_1(s).
\end{equation}
It follows that 
\begin{align*}
\big|\mathcal I_1(\sigma+it)\big|  &  \ll  \int\limits_{-\infty}^\infty 
 \frac{(1 + |t + v|)^{2\sigma - \frac{7}{2} - 6\varepsilon} (1 + |v|)^{1 + 3\varepsilon}}{ (1 + |2t + v|)^{2\sigma - \frac{3}{2} - 3\varepsilon}}\cdot  e^{-\frac{\pi}{2} \big(2|t + v| + |v| - |2t + v|\big)}\; dv\\
 & = \int\limits_{-\infty}^\infty \;\underset{\text{change of variables $v\mapsto u-t$}}{\underbrace{
 \frac{(1 + |u|)^{2\sigma - \frac{7}{2} - 6\varepsilon} (1 + |u-t|)^{1 + 3\varepsilon}}{ (1 + |u+t|)^{2\sigma - \frac{3}{2} - 3\varepsilon}}\cdot  e^{-\frac{\pi}{2} \big(2|u| + |u-t| - |u+t|\big)}}}\; du
 \end{align*}
 We now assume that $t>0$, the case $t<0$ can be shown by an analogous argument. To bound $\mathcal I_1(s)$ we split the interval of integration into 4 regions:
 $$\int_{-\infty}^\infty = \int_0^t+\int_t^\infty+\int_{-t}^0 + \int_{-\infty}^{-t}.$$
 \vskip 5pt
 {\bf Case 1:} $t\ge 0$ and $0\le u<t$. 
  $$\int\limits_0^t \frac{(1 + u)^{2\sigma - \frac{7}{2} - 6\varepsilon} (1 + t-u)^{1 + 3\varepsilon}}{ (1 + t+u)^{2\sigma - \frac{3}{2} - 3\varepsilon}}\;du \ll (1+t)^{-2-3\varepsilon}.
 $$
 \vskip 5pt
 {\bf Case 2:} $t\ge 0$ and $t\le u.$
\begin{align*}
&\int\limits_t^\infty \frac{(1 + u)^{2\sigma - \frac{7}{2} - 6\varepsilon} (1 +u-t)^{1 + 3\varepsilon}}{ (1 + t+u)^{2\sigma - \frac{3}{2} - 3\varepsilon}}\, e^{-\pi(u-t)}\,du \\ &\hskip 40pt = \int\limits_t^{t+t^\varepsilon}  \frac{(1 + u)^{2\sigma - \frac{7}{2} - 6\varepsilon} (1 +u-t)^{1 + 3\varepsilon}}{ (1 + t+u)^{2\sigma - \frac{3}{2} - 3\varepsilon}}\, e^{-\pi(u-t)}\,du \;+\;\mathcal O\Big(e^{-t^\varepsilon}  \Big)\\
&\hskip 40pt \ll (1+t)^{-2 -3\varepsilon}.
\end{align*}
\vskip 5pt
 {\bf Case 3:} $t \ge 0$ and $-t\le u<0$.
\begin{align*}
&\int\limits_{-t}^0 \frac{(1 - u)^{2\sigma - \frac{7}{2} - 6\varepsilon} (1 +t-u)^{1 + 3\varepsilon}}{ (1 + t+u)^{2\sigma - \frac{3}{2} - 3\varepsilon}}\, e^{2\pi u}\;du \\ &\hskip 40pt = \int\limits_{-t^\varepsilon}^0 \frac{(1 - u)^{2\sigma - \frac{7}{2} - 6\varepsilon} (1 +t-u)^{1 + 3\varepsilon}}{ (1 + t+u)^{2\sigma - \frac{3}{2} - 3\varepsilon}}\, e^{2\pi u}\,du \;+\; \mathcal O\Big(e^{-t^\varepsilon}  \Big)\\
&\hskip 40pt \ll (1+t)^{\frac52-2\sigma +7\varepsilon}.
 \end{align*}
\vskip 5pt
{\bf Case 4:} $t \ge 0$ and $u\le -t$.
\begin{align*}
&\int\limits_{-\infty}^{-t} \frac{(1 - u)^{2\sigma - \frac{7}{2} - 6\varepsilon} (1 +t-u)^{1 + 3\varepsilon}}{ (1 +t-u)^{2\sigma - \frac{3}{2} - 3\varepsilon}}\, e^{\pi (u-t)}\,du \\ &\hskip 40pt = \int\limits_{-t-t^\varepsilon}^{-t} \frac{(1 - u)^{2\sigma - \frac{7}{2} - 6\varepsilon} (1 +t-u)^{1 + 3\varepsilon}}{ (1 +t-u)^{2\sigma - \frac{3}{2} - 3\varepsilon}}\, e^{\pi (u-t)}\,du  \;+\; \mathcal O\Big(e^{-t^\varepsilon}  \Big)\\
&\hskip 40pt \ll (1+t)^{-2 -3\varepsilon}.
 \end{align*}

It immediately follows that for $\frac12+4\varepsilon < \text{\rm Re}(s) <1+2\varepsilon$ that
$$\big|\mathcal I_1(s)\big|\ll |s|^{\frac52-2\text{\rm Re}(s)+7\varepsilon}.$$
Again, 
by Stirling's formula (for $s = \sigma+it$ and $\frac12+4\varepsilon < \sigma <1+\varepsilon$), we have
\[
  \left|\frac{\Gamma(2s)^2}{\Gamma\left(s + \frac{1}{2}\right)^2}\right| \ll |s|^{2\sigma - 1}e^{-\pi|t|}.
\]
The proof of the bound for $\mathcal I(s)$ given in Proposition \ref{I(s)Continuation1} immediately follows from the above two bounds and (\ref{I(s)}).
\end{proof}

\pagebreak
\vskip 15pt 
 \noindent
 $\underline{\text{{\bf STEP 3}}}$ {\bf : Meromorphic  continuation of $\boldmath \mathcal R_1(s) + 2\mathcal R_2(s)$ to $\text{\rm \bf Re}(s)>\frac12.$}
 \vskip 5pt
\begin{proposition} \label{R1plu2R2}
Fix $\varepsilon > 0.$ 
 Then $$S_d(s) = \mathcal R_1(s) + 2\mathcal R_2(s) +\mathcal O\bigg(\Big(|s|^{2\text{\rm Re}(s)} + |s|^{\frac32+7\varepsilon} \Big) e^{-\pi|s|}  \bigg)$$  for  $\frac12+4\varepsilon<\text{\rm  Re}(s) < 1+2\varepsilon$.
 \end{proposition}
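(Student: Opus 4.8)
The plan is to read off the statement from the decomposition $S_d(s) = \mathcal R_1(s) + \mathcal R_2(s) + \mathcal I(s)$ of Proposition \ref{SdSIdentity3}, combined with the holomorphy and size bounds for $\mathcal R_2$ and $\mathcal I$ proved in Proposition \ref{I(s)Continuation1} and the meromorphic continuation of $S_d(s)$ from Theorem \ref{Sd(s)Theorem}. The key observation is that, of the three pieces, only $\mathcal R_1(s)$ has a defining series that fails to converge once $\text{\rm Re}(s) < 1$: its $\nu$-sum runs over $(2|\nu||1+d\nu|)^{-s}$, which by Proposition \ref{Ld(s)convergence} converges only for $\text{\rm Re}(s) > 1$. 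By contrast the $\nu$-sum in $\mathcal R_2(s)$ involves $(2|\nu||1+d\nu|)^{-s-\frac12}$ and so converges for $\text{\rm Re}(s) > \frac12$, while the $w$-integral defining $\mathcal I(s)$ is taken on $\text{\rm Re}(w) = -\frac32 - 3\varepsilon < -1$, so both $\mathcal R_2$ and $\mathcal I$ already extend holomorphically to $\frac12 + 4\varepsilon < \text{\rm Re}(s) < 1 + 2\varepsilon$ with the stated bounds.

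First I would pin down the meaning of $\mathcal R_1(s)$ in the enlarged strip. Since $S_d(s)$ is meromorphic on $\text{\rm Re}(s) > 0$ and $\mathcal R_2(s), \mathcal I(s)$ are holomorphic on $\frac12 + 4\varepsilon < \text{\rm Re}(s) < 1 + 2\varepsilon$, the difference $\mathcal R_1(s) := S_d(s) - \mathcal R_2(s) - \mathcal I(s)$ is meromorphic on that strip and agrees with the residue formula for $\mathcal R_1$ on $1+\varepsilon < \text{\rm Re}(s) < 1+2\varepsilon$ by Proposition \ref{SdSIdentity3}; hence it \emph{is} the meromorphic continuation of $\mathcal R_1$, and the identity $S_d = \mathcal R_1 + \mathcal R_2 + \mathcal I$ persists throughout the strip. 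In particular $\mathcal R_1 + 2\mathcal R_2 = S_d + \mathcal R_2 - \mathcal I$ continues meromorphically to $\text{\rm Re}(s) > \frac12$, which is the continuation announced in the title of this step.

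The remaining step is a bookkeeping rearrangement, chosen to match the binomial-expansion computation carried out in the later steps. Writing $\mathcal R_2 = 2\mathcal R_2 - \mathcal R_2$ gives
\[
  S_d(s) = \mathcal R_1(s) + 2\mathcal R_2(s) + \big(\mathcal I(s) - \mathcal R_2(s)\big),
\]
and I would bound the last bracket by the triangle inequality together with the two estimates of Proposition \ref{I(s)Continuation1}, namely $|\mathcal R_2(s)| \ll |s|^{2\text{\rm Re}(s)} e^{-\pi|s|}$ and $|\mathcal I(s)| \ll_\varepsilon |s|^{\frac32 + 7\varepsilon} e^{-\pi|s|}$, yielding $|\mathcal I(s) - \mathcal R_2(s)| \ll_\varepsilon \big(|s|^{2\text{\rm Re}(s)} + |s|^{\frac32 + 7\varepsilon}\big) e^{-\pi|s|}$, which is exactly the error term asserted. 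I do not expect a serious obstacle here, as the analytic work has already been done in Propositions \ref{SdSIdentity3} and \ref{I(s)Continuation1}; the single point demanding care is that, because the Dirichlet series for $\mathcal R_1$ diverges for $\text{\rm Re}(s) < 1$, one must invoke the meromorphic continuation of $S_d$ (Theorem \ref{Sd(s)Theorem}) to give $\mathcal R_1$ meaning in the enlarged strip. The coefficient $2$ on $\mathcal R_2$ is deliberately kept explicit rather than absorbed into the error, since in the subsequent steps $\mathcal R_1$ and $2\mathcal R_2$ will be expanded to produce the $L_d^\#(s)$ and $L_d^\#\!\left(s+\tfrac12\right)$ terms of Theorem \ref{RelatingSd(s)} with their correct normalizing constants.
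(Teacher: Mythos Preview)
Your proposal is correct and follows essentially the same approach as the paper: the paper's proof simply invokes Proposition \ref{SdSIdentity3} for the decomposition $S_d(s) = \mathcal R_1(s) + \mathcal R_2(s) + \mathcal I(s)$, Proposition \ref{I(s)Continuation1} for the bounds on $\mathcal R_2$ and $\mathcal I$, and Theorems \ref{SpectralSideBound} and \ref{Sd(s)Theorem} for the meromorphic continuation of $S_d(s)$. Your version is more explicit about how $\mathcal R_1$ acquires meaning in the enlarged strip and about the algebraic rearrangement $\mathcal R_2 = 2\mathcal R_2 - \mathcal R_2$, but these are exactly the details the paper's terse proof is suppressing.
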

\begin{proof}
 The proof of Proposition \ref{R1plu2R2} immediately follows from  Proposition \ref{SdSIdentity3} (which says that
$S_d(s) = \mathcal R_1(s) +\mathcal R_2(s) + \mathcal I(s))$ and Proposition \ref{I(s)Continuation1} (which gives bounds for $\mathcal R_2(s)$ and $\mathcal I(s)$). We already know that $S_d(s)$ has meromorphic continuation to $\text{\rm Re}(s) > 0$ by Theorems \ref{SpectralSideBound} and \ref{Sd(s)Theorem}.\end{proof}

\vskip 15pt 
 \noindent
 $\underline{\text{{\bf STEP 4}}}${\bf : Meromorphic continuation of $\mathcal R_1(s) + 2\mathcal R_2(s)$ to Re$(s)> \varepsilon$.}
 \vskip 5pt
 \begin{proposition}
 \label{R1and2R2prop}
  Fix $\varepsilon >0$ sufficiently small.  Then
 $$S_d(s) = \mathcal R_1(s) + 2\mathcal R_2(s) + \mathcal O\left(  |s|^{\frac52+7\varepsilon}  e^{-\pi|s|}  \right)$$
 for $4\varepsilon <\text{\rm Re}(s) <\frac12+2\varepsilon.$
  \end{proposition}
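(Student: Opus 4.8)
The plan is to push the identity of Proposition \ref{R1plu2R2} from the upper strip $U=\{\tfrac12+4\varepsilon<\text{\rm Re}(s)<1+2\varepsilon\}$ down across the line $\text{\rm Re}(s)=\tfrac12$ into the lower strip $L=\{4\varepsilon<\text{\rm Re}(s)<\tfrac12+2\varepsilon\}$. The engine is the functional relation
\[
  \mathcal R_2(s)\;=\;-4\pi\,\mathcal R_1\!\left(s+\tfrac12\right),
\]
which I would verify directly from the explicit formulas in Proposition \ref{SdSIdentity3}: the $\nu$-sum and the inner $t$-integral appearing in $\mathcal R_2(s)$ are \emph{identical} to those in $\mathcal R_1(s+\tfrac12)$, since both reduce to $\sum_\nu \tau(\nu)\overline{\tau(1+d\nu)}\,(2|\nu||1+d\nu|)^{-s-\frac12}\int_0^1\!\big(t^{-\frac56+s}+t^{-\frac16+s}\big)\big((a_d(\nu)+1)t+\tfrac{(1-t)^2}{2}\big)^{-s-\frac12}\,dt$. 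Only the Gamma-and-power prefactors differ, and their quotient collapses to $-4\pi$ upon applying $\Gamma(2s+1)=2s\,\Gamma(2s)$ and $\Gamma(s+1)=s\,\Gamma(s)$.

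First I would fix $s\in L$, so that $s+\tfrac12\in U$. By Propositions \ref{SdSIdentity3}--\ref{R1plu2R2}, together with the meromorphy of $S_d$ on $\text{\rm Re}(s)>0$ from Theorems \ref{SpectralSideBound} and \ref{Sd(s)Theorem}, the function $\mathcal R_1$ is already meromorphic and controlled on $U$. Hence the functional relation both \emph{defines} the meromorphic continuation of $\mathcal R_2$ to $L$ and transfers the bounds for $\mathcal R_1$ on $U$ into bounds for $\mathcal R_2$ on $L$. To continue $\mathcal R_1$ itself, I would return to the contour integral for $S_d$ in Proposition \ref{SdSIdentity3} and shift the $w$-line one further half-unit to the left; as $\text{\rm Re}(s)$ drops below $\tfrac12+3\varepsilon$, the only pole of the Gamma quotient that is crossed is the simple pole of $\Gamma(s+w)$ at $w=-s-1$ (one checks that the poles $w=-s-\tfrac32$ of $\Gamma(s+\tfrac12+w)$ stay strictly to the left of the shifted contour on all of $L$). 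Its residue produces an explicit term whose Dirichlet series is of $L_d(s+1)$-type, hence absolutely convergent and polynomially bounded on $\text{\rm Re}(s)>0$ by Proposition \ref{Ld(s)convergence}, plus a further-shifted integral. With $S_d$ meromorphic and $\mathcal R_2$ already continued, solving $S_d=\mathcal R_1+\mathcal R_2+\mathcal I$ for $\mathcal R_1$ then furnishes its continuation to $L$ and, after regrouping, the asserted identity $S_d=\mathcal R_1+2\mathcal R_2+\mathcal O(\cdots)$.

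The estimation then follows the template of the proof of Proposition \ref{I(s)Continuation1}: the residue at $w=-s-1$ is bounded by Stirling's formula and by the $|s|^{-\frac12}$ decay of the Picard integral (Lemma \ref{SteepestDescent}), while the further-shifted integral is bounded by the same four-region splitting (Cases 1--4) with the contour now one half-unit further left. The extra half-shift raises the polynomial exponent by one relative to the bound $|s|^{\frac32+7\varepsilon}e^{-\pi|s|}$ for $\mathcal I(s)$, which is what produces the error $\mathcal O\big(|s|^{\frac52+7\varepsilon}e^{-\pi|s|}\big)$; the factor $e^{-\pi|s|}$ is inherited from the Gamma prefactor exactly as before.

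The main obstacle is the uniform bookkeeping across $\text{\rm Re}(s)=\tfrac12$. Bounding $\mathcal R_2$ and the continued integral separately gives only the cruder exponent $2\text{\rm Re}(s)+\tfrac{11}{6}$ near the top of $L$; obtaining the sharper $\tfrac52$ requires keeping track of the cancellation between the transferred residue term (essentially $\mathcal R_1(s+\tfrac12)$) and the corresponding piece of the shifted contour integral, so that only the genuinely lower-order contributions survive in the error. Making this cancellation explicit — equivalently, isolating the leading part that will later be matched to $L_d^\#(s)$ and $L_d^\#(s+\tfrac12)$ — and verifying that the remainder is uniformly $\ll |s|^{\frac52+7\varepsilon}e^{-\pi|s|}$ throughout $L$ (with the $\varepsilon$-dependence from staying distance $\varepsilon$ away from the crossed pole) is the technical heart of the step.
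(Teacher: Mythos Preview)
Your functional relation $\mathcal R_2(s)=-4\pi\,\mathcal R_1(s+\tfrac12)$ is correct and a nice observation, but it sends you down a more complicated path than the paper takes, and you end up with an unresolved ``obstacle'' at exactly the point that matters.

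The paper's argument is much more direct. It simply shifts the $w$-contour in the integral $\mathcal I(s)$ one further half-unit to the left, to $\text{Re}(w)=-2-3\varepsilon$. This picks up one additional residue, after which one has $S_d(s)=\mathcal R_1(s)+2\mathcal R_2(s)+\mathcal I_2(s)$ with $\mathcal I_2$ the further-shifted integral. The key point is that on the new contour the product $\Gamma(s+\tfrac12+w)\Gamma(s+w)$ has \emph{no} poles throughout the entire lower strip $4\varepsilon<\text{Re}(s)<\tfrac12+2\varepsilon$, so $\mathcal I_2$ is holomorphic there, and the identical Stirling-plus-four-region estimate from the proof of Proposition~\ref{I(s)Continuation1} bounds it by $|s|^{\frac52+7\varepsilon}e^{-\pi|s|}$ (the half-unit further shift is exactly what raises the exponent from $\tfrac32$ to $\tfrac52$). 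No cancellation bookkeeping is required: the error term is a single explicitly bounded integral, and the meromorphic continuation of $\mathcal R_1+2\mathcal R_2$ then falls out from that of $S_d$ and $\mathcal I_2$.

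Your scheme instead tries to continue $\mathcal R_1$ and $\mathcal R_2$ separately via the functional relation and then reassemble. This forces you to track the $L_d(s+1)$-type residue at $w=-s-1$ and to argue that it cancels against a piece of the shifted integral to recover the clean $\mathcal R_1+2\mathcal R_2$ structure. You concede you have not made that cancellation explicit, and the cruder exponent $2\text{Re}(s)+\tfrac{11}{6}$ you quote near the top of $L$ shows the individual pieces are larger than the target error. That is a genuine gap: the proposition asserts a specific error size, and your outline does not reach it. The paper sidesteps the whole issue by never separating the residue from the remaining integral until after the final contour is in place.
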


  \begin{proof}
 To meromorphically continue  $\mathcal R_1(s) + 2\mathcal R_2(s)$ to Re$(s)> \varepsilon$ we first assume that  $s\in\mathbb C$ satisfies $\frac12+\varepsilon <\text{\rm Re}(s)< \frac12+2\varepsilon$. Next, we shift the line of integration in the $w$-integral of $\mathcal I(s)$ from the line $\text{\rm Re}(w) = -\frac32-3\varepsilon$ to the line $\text{\rm Re}(w) = -2-3\varepsilon.$ In doing so we cross possible poles of $\Gamma(s+\frac12+w) \Gamma(s+w)$  at
 $$w =  -2-\alpha\varepsilon,  \;\;-\tfrac32-\alpha\varepsilon, \qquad (1\le\alpha\le 2).$$
 Thus, the only pole that is crossed is the pole at $w=-\frac12-s$ which has $\mathcal R_2(s)$ as the residue.

\vskip 5pt 
  It follows that for $\frac12+\varepsilon <\text{\rm Re}(s)< \frac12+2\varepsilon$ we have $S_d(s) = \mathcal R_1(s) + \mathcal R_2(s) + \mathcal I(s)$ where
\begin{align*} 
&
\mathcal I(s) = \mathcal R_2(s) \; + \;\mathcal I_2(s) \end{align*}
and
 \begin{align*}
& \mathcal I_2(s) := \frac{2^{-6s}\pi^{-2s+\frac12}\;\Gamma(2s)^2}{4\pi i\;\Gamma(s+\frac12)\,\Gamma(s)}  
\int\limits_{\text{\rm Re}(w)=-2-3\varepsilon} \hskip-10pt
 \frac{\Gamma(s+\frac12+w) \Gamma(s+w) \Gamma(-w)}{\Gamma(2s+\frac12+w)} \\ &\hskip 93pt \cdot \sum_{\nu\in\lambda^{-3}\mathcal O_K} \frac{\tau(\nu) \overline{ \tau(1+d\nu)}}{\big(2|\nu|\cdot|1+d\nu|\big)^{-w} }\\
 &
 \hskip53pt
 \cdot \int\limits_0^1 \left(t^{-\frac43-w} +t^{-\frac23-w}\right) \left(\big(a_d(\nu)+1\big)\cdot t+\tfrac{ (1-t)^2}{2}  \right)^{w} dt\,dw. \end{align*}
Consequently
 \begin{equation}
 \label{SdsR1and2R2}
 S_d(s) = \mathcal R_1(s) +2\mathcal R_2(s) + \mathcal I_2(s).
 \end{equation}

 Now, for $\text{\rm Re}(w) = -2-3\varepsilon$ the product of Gamma functions $$\Gamma(s+\frac12+w) \Gamma(s+w)$$ has no poles for
 $4\varepsilon<\text{\rm Re}(s)<\frac12+2\varepsilon$. This shows that $\mathcal I_2(s)$ is a holomorphic function in the region $4\varepsilon<\text{\rm Re}(s)<\frac12+2\varepsilon$. It then follows (as in STEP 2 above) that $\mathcal I_2(s)$ satisfies the bound
\begin{equation}
\label{I2(s)Bound}
\mathcal I_2(s) \ll_\varepsilon  |s|^{\frac52+7\varepsilon}  e^{-\pi|s|}. 
\end{equation}

 Now, by Theorem \ref{SpectralSideBound} and Theorem \ref{Sd(s)Theorem}, the function
 $S_d(s)$ is meromorphic in the region $\text{\rm Re}(s) > \varepsilon.$
 Since we also know that $\mathcal I_2(s)$ is holomorphic in this region  the proof of  Proposition  \ref{R1and2R2prop} immediately follows from (\ref{SdsR1and2R2}) and (\ref{I2(s)Bound}).  As in the previous proof of Proposition \ref{R1plu2R2}, we already know that $S_d(s)$ has meromorphic continuation to $\text{\rm Re}(s) > 0$ by Theorems \ref{SpectralSideBound} and \ref{Sd(s)Theorem}.

\end{proof}

\vskip 10pt 
 \noindent
 $\underline{\text{{\bf STEP 5}}}$ {\bf : Meromorphic continuation of $L_d^\#(s)$ to Re$(s)>\frac13+\varepsilon$.}
\vskip 5pt
Recall the Picard hypergeometric function
$$\mathcal F(s,x) = \int\limits_0^1 \left( t^{s-\frac43} + t^{s-\frac23}  \right) \left(x\cdot t + \tfrac{(t-1)^2}{2}\right)^{-s}  dt, \quad\qquad (s\in\mathbb C,\; x>0),$$
where the above integral converges absolutely for $\text{\rm Re}(s)>\tfrac13.$
For $\text{\rm Re}(s)>1$, we also recall the  L-function
\begin{align*}
L_d^\#(s) & = \sum_{\nu\in\lambda^{-3}\mathcal O_K} \frac{\tau(\nu) \overline{\tau(1+d\nu)}}{\big(|\nu|\cdot|1+d\nu| \big)^s} \cdot\mathcal F\left(s, \tfrac{(d+1)^2}{2d}\right)\\
&
\hskip 30pt
 - \; s \sum_{\nu\in\lambda^{-3}\mathcal O_K} \frac{\tau(\nu) \overline{\tau(1+d\nu)}}{\big(|\nu|\cdot|1+d\nu| \big)^s} \left(a_d(\nu)-\tfrac{d^2+1}{2d}\right)\cdot \mathcal F\left(s+1, \tfrac{(d+1)^2}{2d}\right)
\end{align*}
where $$a_d(\nu) - \tfrac{d^2+1}{2d} \; \ll_d \; |\nu(1+dv)|^{-\frac12}.$$
We shall now show that  the function $L_d^\#(s)$ appears in the first two terms in a certain binomial expansion of the the residue function $\mathcal R_1(s)$ which can be written as
$$\mathcal R_1(s) =G_1(s) \sum_{\nu \in \lambda^{-3}\mathcal O_K} \tau(\nu) \overline{\tau(1 + d\nu)} |\nu (1 + d\nu)|^{-s} \cdot \mathcal F\big(s, a_d(\nu)+1\big),$$
where $$G_1(s) =  \frac{2^{-7s - 1}}{\pi^{2s - 1}}\frac{\Gamma(2s)^2}{\Gamma\left(s + \frac{1}{2}\right)^2}.$$
For $\text{\rm Re}(s)>\frac13$, the binomial expansion of the function $\mathcal F(s,x)$ around $x=x_0$ is given by
\begin{align*}
\mathcal F(s,x) & = \sum_{k=0}^\infty \left( \begin{matrix}-s\\k\end{matrix}  \right) \big(x-x_0\big)^k\int\limits_0^1 \left(t^{s - \frac{4}{3} + k} + t^{s - \frac{2}{3} + k}\right) \left(x_0\cdot  t + \tfrac{(t - 1)^2}{2}\right)^{-s - k} dt\\
& 
= \sum_{k=0}^\infty \left( \begin{matrix}-s\\k\end{matrix}  \right) \big(x-x_0\big)^k \mathcal F(s+k, x_0).
\end{align*}
It follows that by expanding $\mathcal F\big(s, a_d(\nu)+1\big)$ around $a_d(\nu)+1 = \tfrac{(d+1)^2}{2d}$ we obtain
\begin{align*}
&\mathcal R_1(s) = G_1(s) \Bigg[L_d^\#(s)  + \sum_{k=2}^\infty 
\hskip-2pt\left( \begin{matrix}-s\\k\end{matrix}  \right) \sum_{\nu\in\lambda^{-3}\mathcal O_K} \frac{\tau(\nu) \overline{\tau(1+d\nu)}}{\big(|\nu|\cdot|1+d\nu| \big)^s}
\left(a_d(\nu)-\tfrac{d^2+1}{2d}\right)^k\\
&
\hskip 278pt
\cdot\mathcal F\left(s+k, \tfrac{(d+1)^2}{2d}\right)
\Bigg].
\end{align*}

In a similar manner we see that $L_d^\#(s+\frac12)$ appears in the first two terms in a  binomial expansion of the the residue function $\mathcal R_2(s)$ which can be written as
$$\mathcal R_2(s) =G_2(s) \sum_{\nu \in \lambda^{-3}\mathcal O_K} \frac{\tau(\nu) \overline{\tau(1 + d\nu)}}{ |\nu (1 + d\nu)|^{s+\frac12}} \cdot \mathcal F\big(s+\tfrac12, \, a_d(\nu)+1\big),$$
where $$G_2(s) =  \frac{-2^{-7s-\frac12}}{\pi^{2s-1}}\frac{\Gamma(2s)^2}{\Gamma(s)^2}.$$
It again follows that by expanding $\mathcal F\big(s+\tfrac12,\, a_d(\nu)+1\big)$ around $a_d(\nu)+1 = \tfrac{(d+1)^2}{2d}$ we obtain
\begin{align*}
&\mathcal R_2(s) = G_2(s) \Bigg[L_d^\#(s+\tfrac12) + \sum_{k=2}^\infty 
\left( \begin{matrix}-s-\tfrac12\\k\end{matrix}  \right) \sum_{\nu\in\lambda^{-3}\mathcal O_K} \frac{\tau(\nu) \overline{\tau(1+d\nu)}}{\big(|\nu|\cdot|1+d\nu| \big)^{s+\frac12}}
\\ &\hskip 170pt \cdot \left(a_d(\nu)-\tfrac{d^2+1}{2d}\right)^k \mathcal F\left(s+\tfrac12+k, \tfrac{(d+1)^2}{2d}\right)
\Bigg].
\end{align*}
Since we have already proved in Proposition \ref{R1and2R2prop} that $\mathcal R_1(s) + 2\mathcal R_2(s)$ has a meromorphic continuation to $\text{\rm Re}(s) > 0$ and the sum 
$$\sum_{\nu\in\lambda^{-3}\mathcal O_K} \frac{\tau(\nu) \overline{\tau(1+d\nu)}}{\big(|\nu|\cdot|1+d\nu| \big)^s}
\left(a_d(\nu)-\tfrac{d^2+1}{2d}\right)^k$$
is absolutely convergent for $\text{\rm Re}(s) > 0$ and $k\ge 2$ it then follows from the previous computations that  $L_d^\#(s)$ has a meromorphic continuation to $\text{\rm Re}(s) > \frac13$.

 The sum (over $k\ge 2$) of the binomial terms $\left| \left(\begin{smallmatrix}-s\\k\end{smallmatrix}  \right)\right|$ grows exponentially in $|s|$, which implies that  the function
$$\sum_{k=2}^\infty 
\left( \begin{matrix}-s\\k\end{matrix}  \right) \sum_{\nu\in\lambda^{-3}\mathcal O_K} \frac{\tau(\nu) \overline{\tau(1+d\nu)}}{\big(|\nu|\cdot|1+d\nu| \big)^s}
\left(a_d(\nu)-\tfrac{d^2+1}{2d}\right)^k$$
has  at most exponential growth in $|s|$ for $\text{\rm Re}(s) > 0$ as $|s|\to\infty.$ In this manner it is possible to prove that $L_d^\#(s)$ has at most exponential growth (away from poles) in the region $\text{\rm Re}(s) >\frac13.$ For applications, however, we would like to do better and show it has polynomial growth instead of exponential growth. Obtaining polynomial growth can be achieved by breaking the sum over $\nu \in \lambda^{-3}\mathcal O_K$ into two sums, one sum over
$|\nu| \le |s|$ and the other over $|\nu| >|s|.$ This approach will be worked out in the next STEP.

\vskip 10pt 
 \noindent
 $\underline{\text{{\bf STEP 6}}}$ {\bf : Obtaining a polynomial bound in $|s|$ for $L_d^\#(s)$ away from poles.}
\vskip 5pt
 As explained at the end of STEP 5, in order to obtain a polynomial bound for $L_d^\#(s)$ it is necessary to break the $\nu$-sum in the definition of $L_d^{\#}(s)$ into two sums. This approach leads to the following definition.
  \begin{definition} {\bf (The function $ L_d^{\#\prime}(s)$)}
  Let $\text{\rm Re}(s) > 1$. Then we define
  \begin{align*}
    &L_d^{\#\prime}(s) := \sum_{\nu \in \lambda^{-3}\mathcal O_K, \;|\nu| > |s|} \tau(\nu) \overline{\tau(1 + d\nu)} |\nu (1 + d\nu)|^{-s} \\ &\hskip 150pt \cdot\int\limits_0^1 \left(t^{s - \frac{4}{3}} + t^{s - \frac{2}{3}}\right) \left(\tfrac{(d + 1)^2}{2d}t + \tfrac{(t - 1)^2}{2}\right)^{-s} dt \\ &\hskip 70pt - s \hskip-8pt\sum_{\nu \in \lambda^{-3}\mathcal O_K,\;|\nu| > |s|} \tau(\nu) \overline{\tau(1 + d\nu)} |\nu (1 + d\nu)|^{-s} \left(a_d(\nu) - \tfrac{d^2 + 1}{2d}\right) \\ &\hskip 150pt \cdot \int\limits_0^1 \left(t^{s - \frac{1}{3}} + t^{s + \frac{1}{3}}\right) \left(\tfrac{(d + 1)^2}{2d}t + \tfrac{(t - 1)^2}{2}\right)^{-s - 1} dt.
  \end{align*}
  \end{definition} 
 
 \begin{remark} Note that this is the same as the definition of $L_d^{\#}(s)$ but with the sums over $\nu$ restricted to $\nu \in \lambda^{-3}\mathcal O_K$ with $|\nu| > |s|$.
 \end{remark}

\begin{proposition}
\label{LsharpPrime} Fix $\varepsilon > 0.$ The function $L_d^{\#}(s) - L_d^{\#\prime}(s)$ has  holomorphic continuation to 
$\frac13+\varepsilon< \text{\rm Re}(s) <1$ and in this region satisfies the bound 
$
  \left|L_d^{\#}(s) - L_d^{\#\prime}(s)\right| \ll_\varepsilon |s|^3.
$
\end{proposition}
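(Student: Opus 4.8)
The plan is to note that for $\text{\rm Re}(s)>1$ the full series defining $L_d^\#(s)$ and the tail series defining $L_d^{\#\prime}(s)$ both converge (by Proposition \ref{Ld(s)convergence} and the remark following Definition \ref{LfunctionStar}), so their difference is exactly the truncated sum over the finitely many $\nu\in\lambda^{-3}\mathcal O_K$ with $0<|\nu|\le|s|$. Writing $x_0:=\tfrac{(d+1)^2}{2d}$, this is
\begin{align*}
T(s) := {}& \mathcal F\!\left(s,x_0\right)\sum_{0<|\nu|\le|s|}\frac{\tau(\nu)\overline{\tau(1+d\nu)}}{\big(|\nu|\,|1+d\nu|\big)^{s}} \\
&{}- s\,\mathcal F\!\left(s+1,x_0\right)\sum_{0<|\nu|\le|s|}\frac{\tau(\nu)\overline{\tau(1+d\nu)}\big(a_d(\nu)-\tfrac{d^2+1}{2d}\big)}{\big(|\nu|\,|1+d\nu|\big)^{s}};
\end{align*}
call the two truncated Dirichlet sums $A(s)$ and $B(s)$, so $T(s)=\mathcal F(s,x_0)A(s)-s\,\mathcal F(s+1,x_0)B(s)$. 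Because $\mathcal F(s,x_0)$ and $\mathcal F(s+1,x_0)$ are holomorphic for $\text{\rm Re}(s)>\tfrac13$ and each $(|\nu|\,|1+d\nu|)^{-s}$ is entire, $T(s)$ is, for $s$ in the strip, a finite sum of functions holomorphic there; since the index set $\{\nu:|\nu|\le|s|\}$ is locally constant in $s$ off the discrete family of circles $|s|=|\nu|$, this furnishes the claimed holomorphic continuation of $L_d^\#(s)-L_d^{\#\prime}(s)$. In particular, since $L_d^\#(s)$ is already known to be meromorphic on $\text{\rm Re}(s)>\tfrac13$ (STEP 5), all of its poles are carried by $L_d^{\#\prime}(s)$, to be analyzed in the next step.

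For the bound, set $\sigma:=\text{\rm Re}(s)\in(\tfrac13+\varepsilon,1)$ and first control the Picard factors. Since $x_0t+\tfrac{(t-1)^2}{2}>0$ on $(0,1)$, the base of $(\,\cdot\,)^{-s}$ in $\mathcal F$ is positive real, so the integrand's modulus is independent of $\text{\rm Im}(s)$ and hence
\[
|\mathcal F(s,x_0)|\le \mathcal F(\sigma,x_0)\ll_\varepsilon 1,\qquad |\mathcal F(s+1,x_0)|\le \mathcal F(\sigma+1,x_0)\ll_\varepsilon 1,
\]
uniformly on the strip (Lemma \ref{SteepestDescent} gives the sharper $\ll_d|s|^{-\frac12}$, which is not needed here). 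It therefore suffices to prove $|A(s)|\ll_\varepsilon |s|^{3-2\sigma}$ and $|B(s)|\ll_\varepsilon|s|^{2-2\sigma}$, since then $|T(s)|\ll_\varepsilon |A(s)|+|s|\,|B(s)|\ll_\varepsilon |s|^{3-2\sigma}\ll_\varepsilon|s|^3$.

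The two Dirichlet-sum bounds follow by Cauchy's inequality and a partial-sum estimate, in the spirit of Proposition \ref{Ld(s)convergence}. From the shape $\nu=\pm\omega^j\lambda^{3n-k}ab^3$, the bound $|\tau(\nu)|\ll 3^{n/2}|b|$, and $|\nu|\asymp 3^{3n/2}|a|\,|b|^3$, one reads off the trivial estimate $|\tau(\nu)|^2\ll|\nu|$. A dyadic lattice-point count in $\mathbb C$ then gives, for $\sigma<1$,
\[
\sum_{0<|\nu|\le X}\frac{|\tau(\nu)|^2}{|\nu|^{2\sigma}}\ll\sum_{0<|\nu|\le X}|\nu|^{1-2\sigma}\ll X^{3-2\sigma},\qquad \sum_{0<|\nu|\le X}\frac{|\tau(\nu)|^2}{|\nu|^{2\sigma+1}}\ll X^{2-2\sigma}.
\]
Applying Cauchy's inequality to $A(s)$, splitting $\tau(\nu)\overline{\tau(1+d\nu)}$ across the two factors and using $|1+d\nu|\asymp_d|\nu|$ (so the companion sum over $1+d\nu$ obeys the first estimate with $X\asymp_d|s|$), yields $|A(s)|\ll_\varepsilon|s|^{3-2\sigma}$. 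For $B(s)$ I would first invoke the decay $a_d(\nu)-\tfrac{d^2+1}{2d}\ll_d|\nu(1+d\nu)|^{-\frac12}$ from Definition \ref{LfunctionStar} to insert the factor $|\nu|^{-\frac12}|1+d\nu|^{-\frac12}$, and then Cauchy's inequality reduces $B(s)$ to the second (improved) partial sum, giving $|B(s)|\ll_\varepsilon|s|^{2-2\sigma}$.

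The routine inputs are the Picard-factor bound and the two applications of Cauchy's inequality; the step requiring care is the uniform partial-sum estimate $\sum_{|\nu|\le X}|\tau(\nu)|^2|\nu|^{-2\sigma}\ll X^{3-2\sigma}$ and its companion over $1+d\nu$, which is valid throughout the strip precisely because $3-2\sigma>0$ there, together with the bookkeeping of the $s$-dependent cutoff $|\nu|\le|s|$. That cutoff is what keeps each $T(s)$ a finite (hence holomorphic) sum while still producing a uniform polynomial bound, and reconciling these two demands is the main obstacle. Since $3-2\sigma<\tfrac73$ on the strip, the asserted bound $\ll_\varepsilon|s|^3$ in fact holds with room to spare.
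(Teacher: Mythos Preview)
Your proof is correct and follows essentially the same approach as the paper: write the difference as the finite truncated sum over $|\nu|\le|s|$, note that the Picard integrals are bounded on the strip, and estimate termwise. The paper is slightly more direct---it uses $|\tau(\nu)\overline{\tau(1+d\nu)}|\ll|s|$ for $|\nu|\le|s|$ together with the lattice-point count $\#\{|\nu|\le|s|\}\ll|s|^2$ rather than routing through Cauchy's inequality---so your sharper exponent $3-2\sigma$ is a harmless refinement, and your explicit remark about the $s$-dependent cutoff being locally constant off the circles $|s|=|\nu|$ is a point the paper glosses over.
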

\begin{proof}
For $\frac13+\varepsilon< \text{\rm Re}(s) <1$ we have
\begin{align*}
  &L_d^{\#}(s) - L_d^{\#\prime}(s)  \;=  \underset{|\nu| \leq |s|}{\sum_{\nu \in \lambda^{-3}\mathcal O_K,} }
  \tau(\nu) \overline{\tau(1 + d\nu)} |\nu (1 + d\nu)|^{-s} 
  \\ &
  \hskip 170pt 
  \cdot \int\limits_0^1 \left(t^{s - \frac{4}{3}} + t^{s - \frac{2}{3}}\right) \left(\tfrac{(d + 1)^2}{2d}t + \tfrac{(t - 1)^2}{2}\right)^{-s} dt 
  \\ &
  \\
  &
  \hskip 50pt 
  - s\hskip-5pt\underset{|\nu| \leq |s|}{\sum_{\nu \in \lambda^{-3}\mathcal O_K,} } \tau(\nu) \overline{\tau(1 + d\nu)} |\nu (1 + d\nu)|^{-s} \left(a_d(\nu) - \tfrac{d^2 + 1}{2d}\right) 
  \\ &
  \hskip 160pt 
  \cdot \int\limits_0^1 \left(t^{s - \frac{1}{3}} + t^{s + \frac{1}{3}}\right) \left(\tfrac{(d + 1)^2}{2d}t + \tfrac{(t - 1)^2}{2}\right)^{-s - 1} dt.
\end{align*}

  Since $\tfrac{(d+a)^2}{2d}>1$ and $\frac13< \text{\rm Re}(s)$ both of the integrals in each summand above are bounded. Furthermore, the number of terms in each sum is less than a constant times $|s|^2$ and each term  $|\tau(\nu) \overline{\tau(1 + d\nu)}| \ll |s|$ for $\nu\ll |s|.$
 Therefore,
$
  \left|L_d^{\#}(s) - L_d^{\#\prime}(s)\right| \ll |s|^3.
$
 It is also clear  that $L_d^{\#}(s) - L_d^{\#\prime}(s)$  is holomorphic for $\text{\rm Re}(s) > \frac13$ because it is expressed as a finite sum in which each summand is holomorphic.
\end{proof}

\begin{proposition} \label{R1BoundProp} Fix $\varepsilon > 0.$ Then for $\frac13+\varepsilon < \text{\rm Re}(s) <1$ we have
\begin{align*}
 & \mathcal R_1(s)  = \frac{2^{-7s - 1}}{\pi^{2s - 1}}\frac{\Gamma(2s)^2}{\Gamma\left(s + \frac{1}{2}\right)^2} \left(L_d^{\#}(s) + \mathcal O\left(|s|^2 \right)\right),\\
  &
 \mathcal R_2(s)  = \frac{-2^{-7s-\frac12}}{\pi^{2s-1}}\frac{\Gamma(2s)^2}{\Gamma(s)^2} \left(L_d^{\#}\left(s+\tfrac12\right) + \mathcal O\left(|s|^2 \right)\right).
\end{align*}
\end{proposition}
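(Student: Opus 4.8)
The plan is to read off from STEP 5 the closed forms $\mathcal R_1(s) = G_1(s)\sum_{\nu}\tau(\nu)\overline{\tau(1+d\nu)}\,|\nu(1+d\nu)|^{-s}\,\mathcal F\big(s,a_d(\nu)+1\big)$ and, with $s$ replaced by $s+\tfrac12$ and $G_1$ by $G_2$, the analogous form for $\mathcal R_2(s)$, and to recognize $L_d^\#(s)$ and $L_d^\#(s+\tfrac12)$ as the zeroth- and first-order truncations of the Taylor expansion of $x\mapsto\mathcal F(\,\cdot\,,x)$ about $x_0 := \tfrac{(d+1)^2}{2d}$. Writing $\delta_\nu := a_d(\nu)-\tfrac{d^2+1}{2d}$, so that $a_d(\nu)+1-x_0 = \delta_\nu$, and using $\binom{-s}{1}=-s$, the definition of $L_d^\#$ is exactly $\sum_\nu\tau(\nu)\overline{\tau(1+d\nu)}|\nu(1+d\nu)|^{-s}\big[\mathcal F(s,x_0)-s\,\delta_\nu\,\mathcal F(s+1,x_0)\big]$. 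Consequently, for $\mathrm{Re}(s)>1$ (where every series converges absolutely) we have $\mathcal R_1(s)/G_1(s)-L_d^\#(s) = \sum_\nu\tau(\nu)\overline{\tau(1+d\nu)}|\nu(1+d\nu)|^{-s}\,E_\nu(s)$, where $E_\nu(s)$ is the second-order Taylor remainder of $\mathcal F(s,\cdot)$ at $x_0$ evaluated at $a_d(\nu)+1$. The whole task reduces to bounding this remainder sum by $\mathcal O(|s|^2)$ and then invoking the meromorphic continuations already established to propagate the bound to $\tfrac13+\varepsilon<\mathrm{Re}(s)<1$.

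For the remainder itself I would differentiate under the integral sign to get $\partial_x\mathcal F(s,x)=-s\,\mathcal F(s+1,x)$ and $\partial_x^2\mathcal F(s,x)=s(s+1)\,\mathcal F(s+2,x)$, so that the integral form of the remainder gives $|E_\nu(s)|\ll |s|^2\,|\delta_\nu|^2\,\sup_{\xi}\big|\mathcal F(s+2,\xi)\big|$, the supremum taken over $\xi$ on the segment joining $x_0$ to $a_d(\nu)+1$. Two inputs finish the pointwise bound: first, $|\delta_\nu|\ll_d |\nu(1+d\nu)|^{-1/2}$, which is the estimate for $a_d(\nu)$ recorded in Definition \ref{LfunctionStar}; second, taking absolute values inside the Picard integral shows $|\mathcal F(s+2,\xi)|\le \mathcal F(\mathrm{Re}(s)+2,\xi)$, which is bounded independently of $\mathrm{Im}(s)$ and uniform for $\xi$ in any fixed compact subinterval of $(0,\infty)$. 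Since $a_d(\nu)+1\to x_0$ as $|\nu|\to\infty$, all but finitely many of the segments lie in a fixed neighborhood of $x_0$, so $\sup_\xi|\mathcal F(s+2,\xi)|\ll 1$ there, while the finitely many remaining $\nu$ each contribute a bounded-in-$|s|$ constant; this yields $|E_\nu(s)|\ll |s|^2\,|\nu(1+d\nu)|^{-1}$ uniformly.

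Summing, I obtain $\big|\mathcal R_1(s)/G_1(s)-L_d^\#(s)\big|\ll |s|^2\sum_\nu |\tau(\nu)\overline{\tau(1+d\nu)}|\,|\nu(1+d\nu)|^{-\mathrm{Re}(s)-1}$, and the last sum is precisely a shift by one of the series treated in Proposition \ref{Ld(s)convergence}; it therefore converges for $\mathrm{Re}(s)>0$ by the same Cauchy--Schwarz argument, which exploits the sparsity of the support of $\tau$, giving the $\mathcal O(|s|^2)$ bound for $\mathrm{Re}(s)>1$. Because the remainder series is holomorphic wherever it converges (namely on $\mathrm{Re}(s)>0$) and both $\mathcal R_1/G_1$ and $L_d^\#$ are meromorphic on $\mathrm{Re}(s)>\tfrac13$ by STEP 5, analytic continuation extends the identity, and hence the bound, to the stated strip. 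The estimate for $\mathcal R_2$ is identical after replacing $s$ by $s+\tfrac12$: the remainder of $\mathcal F(s+\tfrac12,\cdot)$ is $\ll|s|^2|\nu(1+d\nu)|^{-1}$, and the resulting sum carries the weight $|\nu(1+d\nu)|^{-\mathrm{Re}(s)-3/2}$, convergent for $\mathrm{Re}(s)>-\tfrac12$.

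\textbf{The main obstacle} I expect is the uniformity of the second-order remainder bound across the whole range of $\nu$: the factor $\sup_\xi|\mathcal F(s+2,\xi)|$ degenerates as $\xi\to 0^+$, since the Picard integrand develops a non-integrable singularity at $t=1$, so one must verify that $a_d(\nu)+1$ stays bounded away from $0$ for every $\nu$ carrying a nonzero coefficient and isolate the finitely many small-$|\nu|$ terms by hand. This is exactly why the finite Taylor remainder, rather than the full binomial series of STEP 5, is the right tool here: for small $\nu$ one may have $|\delta_\nu|\ge x_0$, placing $a_d(\nu)+1$ outside the disc of convergence, so the series identity fails even though the finite remainder bound does not. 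This is also the source of the $|\nu|\le|s|$ versus $|\nu|>|s|$ dichotomy underlying $L_d^{\#\prime}$ and Proposition \ref{LsharpPrime}.
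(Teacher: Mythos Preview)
Your approach is correct and takes a genuinely different route from the paper. The paper splits the $\nu$-sum into $|\nu|\le|s|$ and $|\nu|>|s|$, expands the large-$|\nu|$ part by the full binomial series (the restriction $|\delta_\nu|<|s|^{-1}$ being what tames the $k$-sum $\sum_{k\ge 2}\binom{-s}{k}\delta_\nu^k$), recognizes the $k=0,1$ terms as $L_d^{\#\prime}(s)$, bounds the small-$|\nu|$ part by counting $O(|s|^2)$ terms, and finally invokes Proposition~\ref{LsharpPrime} to pass from $L_d^{\#\prime}$ to $L_d^{\#}$. Your use of the integral form of the second-order Taylor remainder sidesteps both the infinite series and the $|\nu|$-dichotomy: since $\partial_x^2\mathcal F(s,x)=s(s+1)\mathcal F(s+2,x)$ and $|\mathcal F(s+2,\xi)|\le\mathcal F(\mathrm{Re}(s)+2,\xi)$, you get $|E_\nu(s)|\ll|s|^2|\nu(1+d\nu)|^{-1}$ uniformly in $\nu$, and the remainder sum is $|s|^2$ times a series that converges on all of $\mathrm{Re}(s)>0$. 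What the paper's decomposition buys is that it never has to establish a uniform lower bound on $a_d(\nu)+1$ (the small-$\nu$ terms are handled by direct enumeration), whereas you must verify this to control $\sup_\xi\mathcal F(\mathrm{Re}(s)+2,\xi)$; your closing paragraph correctly isolates this point and correctly diagnoses it as the reason behind the $L_d^{\#\prime}$ device. In exchange, your argument is shorter and yields the $\mathcal O(|s|^2)$ bound directly, without the detour through Proposition~\ref{LsharpPrime}.
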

\begin{proof}
Recall that
\begin{align*}
  \mathcal R_1(s) = \frac{2^{-7s - 1}}{\pi^{2s -1}}\frac{\Gamma(2s)^2}{\Gamma\left(s + \frac{1}{2}\right)^2} \sum_{\nu \in \lambda^{-3}\mathcal O_K} \tau(\nu) \overline{\tau(1 + d\nu)} |\nu (1 + d\nu)|^{-s} \\ \cdot  \int\limits_0^1 \left(t^{s - \frac{4}{3}} + t^{s - \frac{2}{3}}\right) \left((a_d(\nu) + 1) t + \tfrac{(1 - t)^2}{2}\right)^{-s} dt.
\end{align*}

We split the sum over $\nu \in \lambda^{-3}\mathcal O_K$ into the cases in which $|\nu| \leq |s|$ and $|\nu| > |s|$, i.e.
\begin{align*}
&
  \sum_{\nu \in \lambda^{-3}\mathcal O_K} \tau(\nu) \overline{\tau(1 + d\nu)} |\nu (1 + d\nu)|^{-s} \\ &\hskip 80pt \cdot \int\limits_0^1 \left(t^{s - \frac{4}{3}} + t^{s - \frac{2}{3}}\right) \left((a_d(\nu) + 1) t + \tfrac{(1 - t)^2}{2}\right)^{-s} dt \\
  &
  \hskip 15pt
  = \sum_{\nu \in \lambda^{-3}\mathcal O_K, \; |\nu| \leq |s|} \hskip-10pt\tau(\nu) \overline{\tau(1 + d\nu)} |\nu (1 + d\nu)|^{-s} \\ &\hskip 80pt \cdot \int\limits_0^1 \left(t^{s - \frac{4}{3}} + t^{s - \frac{2}{3}}\right) \left((a_d(\nu) + 1) t + \tfrac{(1 - t)^2}{2}\right)^{-s} dt 
  \\
  &
  \hskip 35pt + \hskip-10pt \sum_{\nu \in \lambda^{-3}\mathcal O_K, \; |\nu| > |s|} \hskip-10pt\tau(\nu) \overline{\tau(1 + d\nu)} |\nu (1 + d\nu)|^{-s} \\ &\hskip 80pt \cdot \int\limits_0^1 \left(t^{s - \frac{4}{3}} + t^{s - \frac{2}{3}}\right) \left((a_d(\nu) + 1) t + \tfrac{(1 - t)^2}{2}\right)^{-s} dt.
\end{align*}

For the $|\nu| > |s|$ case, we use the binomial theorem, yielding
\begin{align*}
  \sum_{k = 0}^{\infty} \binom{-s}{k} \sum_{\nu \in \lambda^{-3}\mathcal O_K, \;|\nu| > |s|} \tau(\nu) \overline{\tau(1 + d\nu)} |\nu (1 + d\nu)|^{-s} \left(a_d(\nu) - \tfrac{d^2 + 1}{2d}\right)^k \\ \cdot \int\limits_0^1 \left(t^{s - \frac{4}{3} + k} + t^{s - \frac{2}{3} + k}\right) \left(\tfrac{(d + 1)^2}{2d} t + \tfrac{(t - 1)^2}{2}\right)^{-s - k} dt.
\end{align*}
Note that the sum of the $k = 0$ and $k = 1$ terms is precisely the definition of $L_d^{\#\prime}(s)$, so that this series equals $L_d^{\#\prime}(s) + T_1(s)$ where
\begin{align*}
  T_1(s) :=\sum_{k = 2}^{\infty} \binom{-s}{k} \sum_{\nu \in \lambda^{-3}\mathcal O_K, \; |\nu| > |s|}\hskip-8pt \tau(\nu) \overline{\tau(1 + d\nu)} |\nu (1 + d\nu)|^{-s} \left(a_d(\nu) - \tfrac{d^2 + 1}{2d}\right)^k \\ \cdot \int\limits_0^1 \left(t^{s - \frac{4}{3} + k} + t^{s - \frac{2}{3} + k}\right) \left(\tfrac{(d + 1)^2}{2d} t + \tfrac{(t - 1)^2}{2}\right)^{-s - k} dt.
\end{align*}

We want to bound $T_1(s)$. Consider the integral
\[
  \int\limits_0^1 \left(t^{s - \frac{4}{3} + k} + t^{s - \frac{2}{3} + k}\right) \left(\tfrac{(d + 1)^2}{2d}t + \tfrac{(t - 1)^2}{2}\right)^{-s - k} dt.
\]
In particular, we claim that its absolute value is less than a constant times $c_d^{-k}$, where $c_d > 1$ is a constant that may depend on $d$. If $\frac{d}{d^2 + 1} \leq t \leq 1$, then $\tfrac{(d + 1)^2}{2d}t + \frac{(t - 1)^2}{2} \geq c_d^{\prime}$, where $c_d^{\prime} > 1$. For all $0 \leq t \leq 1$, $\frac{(d + 1)^2}{2d}t + \frac{(t - 1)^2}{2} \geq \frac{1}{2}$. Because $d \geq 2$, $0 < \frac{d}{d^2 + 1} < \frac{1}{2}$. Let $c_d = \min\left(\frac{d^2 + 1}{2d},c_d^{\prime}\right)$.

Additionally, $\left|a_d(\nu) - \tfrac{d^2 + 1}{2d}\right| \ll |\nu|^{-1} < |s|^{-1}$, and
\[
  \sum_{\nu \in \lambda^{-3}\mathcal O_K, \; |\nu| > |s|} \tau(\nu) \overline{\tau(1 + d\nu)} |\nu (1 + d\nu)|^{-s} \left(a_d(\nu) - \tfrac{d^2 + 1}{2d}\right)^2
\]
is bounded for $\text{\rm Re}(s) > 0$.

It follows that
\begin{align*}
   T_1(s)
 & \ll \;  \sum_{k = 2}^{\infty} \binom{|s| + k - 1}{k} (c_d|s|)^{-k + 2}
 \\
 &
 \ll \; \sum_{k = 0}^{\infty} \binom{|s| + k - 1}{k} (c_d|s|)^{-k + 2} = c_d^2|s|^2 \left(1 - c_d^{-1}|s|^{-1}\right)^{-|s|}.
\end{align*}

For the $|\nu| \leq |s|$ case, we first note that there are at most $\ll |s|^2$ terms in the sum
\begin{align*} &T_2(s) := \hskip-6pt\sum_{\nu \in \lambda^{-3}\mathcal O_K, \;|\nu| \leq |s|}
   \hskip-10pt
  \tau(\nu) \overline{\tau(1 + d\nu)} |\nu (1 + d\nu)|^{-s} \\ &\hskip 130pt \cdot \int\limits_0^1 \left(t^{s - \frac{4}{3}} + t^{s - \frac{2}{3}}\right) \left((a_d(\nu) + 1) t + \tfrac{(1 - t)^2}{2}\right)^{-s} dt.
\end{align*}
For each of those terms, $|\nu (1 + d\nu)|^{-s}$ and
\[
  \int\limits_0^1 \left(t^{s - \frac{4}{3}} + t^{s - \frac{2}{3}}\right) \left((a_d(\nu) + 1) t + \tfrac{(1 - t)^2}{2}\right)^{-s} dt
\]
are bounded. The number of summands in this case is less than a constant times $|s|^2$, so the sum is thus less than a constant times $|s|^2$.

Therefore, for $s\in\mathbb C$ with $\varepsilon<\text{\rm Re}(s)<1$ we have
\[
  \mathcal R_1(s) = \frac{2^{-7s - 1}}{\pi^{2s -1}}\frac{\Gamma(2s)^2}{\Gamma\left(s + \frac{1}{2}\right)^2} \left(L_d^{\#\prime}(s) + H_1(s)\right),
\]
where $H_1(s) := T_1(s) + T_2(s)$ and $|H_1(s)| \ll |s|^2$. An analogous argument tells us that
\[
  \mathcal R_2(s)  = -\frac{2^{-7s-\frac12}}{\pi^{2s-1}}\frac{\Gamma(2s)^2}{\Gamma(s)^2} \left(L_d^{\#\prime}\left(s+\tfrac12\right) + H_2(s)\right),
\]
where $|H_2(s)| \ll |s|^2$. Combining this with Proposition \ref{LsharpPrime} completes the proof  Proposition \ref{R1BoundProp}.
\end{proof}

\pagebreak
\vskip 10pt 
 \noindent
 $\underline{\text{{\bf STEP 7}}}$ {\bf : Completion of the proof of the first two parts of Theorem \ref{RelatingSd(s)}.}
\vskip 5pt
The first part of Theorem \ref{RelatingSd(s)}, given by
\begin{align}\label{FirstPart}
  &S_d(s) \,=\,  \frac{2^{-7s-1}}{\pi^{2s-1}}\frac{\Gamma(2s)^2}{\Gamma(s+\frac12)^2}\cdot L_d^\#(s)\; 
  -\;
  \frac{2^{-7s+\frac12}}{\pi^{2s-1}}\frac{\Gamma(2s)^2} {\Gamma(s)^2}\cdot  L_d^{\#}(s+\tfrac12) \; \\ \nonumber &\hskip 250pt + \;\mathcal O\bigg(|s|^{2\text{\rm Re}(s)+3} e^{-\pi|s|}   \bigg)
  \end{align}
for $\text{\rm Re}(s) > \frac13$, follows from Propositions \ref{R1plu2R2} and \ref{R1BoundProp}.
For the second part,  Theorems \ref{Theorem:SpectralBound} and \ref{Sd(s)Theorem} imply that 
\begin{align*}
 |S_d(s)| & \ll_\varepsilon \, |s|^{\textup{max}\left(2\text{\rm Re}(s) + \frac{5}{6},\;\frac{4}{3}\right) + \varepsilon}  e^{-\pi|s|} \;+ \; \left|\frac{\Gamma(2s) \Gamma\left(2s - \tfrac23\right)}{\Gamma\left(2s + \tfrac16\right)}\right|\\
& \ll_\varepsilon \, |s|^{\textup{max}\left(2\text{\rm Re}(s) + \frac{5}{6},\;\frac{4}{3}\right) + \varepsilon}  e^{-\pi|s|}
\end{align*}
for $\varepsilon < \text{\rm Re}(s)<1$ provided $|s-\rho|>\varepsilon$  for any pole $\rho\in\mathbb C$ of $S_d(s)$. The proof of the second part  now follows from the above bound for $|S_d(S)|$ together with the first part (\ref{FirstPart})
and the asymptotic formulae
\[
  \left|\frac{\Gamma(2s)^2}{\Gamma(s)^2}\right| \sim |s|^{2\text{\rm Re}(s)}e^{-\pi|s|}, \qquad \left|\frac{\Gamma(2s)^2}{\Gamma\left(s + \frac{1}{2}\right) \Gamma(s)}\right| \sim |s|^{2\text{\rm Re}(s) - \frac12}e^{-\pi|s|}.
\]

\vskip 10pt
$\underline{\text{{\bf STEP 8}}}${\bf : Determination of the poles of $L_d^{\#}(s)$ for $\text{\rm \bf Re}(s)>\frac13$.}

\vskip 5pt
The formula found in the first part of Theorem \ref{RelatingSd(s)} tells us that for each pole of $S_d(s)$ with $\textup{Re}(s) > \frac13$, either $L_d^{\#}(s)$ or $L_d^{\#}\left(s + \frac{1}{2}\right)$ has a pole at the same location, with the corresponding order. We also know that the poles of $L_d^{\#}(s)$ must have $\textup{Re}(s) \leq 1$. Thus if $S_d(s)$ has a simple pole at $s = \frac{2}{3}$, then $L_d^{\#}(s)$ has a simple pole at $s = \frac{2}{3}$.  Also, $L_d^{\#}(s)$ has a double pole at $s = \frac{1}{2}$ and simple poles at $s = \frac{1}{2} \pm it_j$, $t_j \neq 0$, $\left\langle u_j,\theta\overline{\theta_d} \right\rangle \neq 0$. If $L_d^{\#}(s)$ instead had any of the poles at $s = 1$ or $s = 1 \pm t_j$, then we would be forced either to have $S_d(s)$ have a pole at $s = 1$ or $s = 1 \pm it_j$ or to have $L_d^{\#}(s)$ have a pole at $s = \frac{3}{2}$ or $s = \frac{3}{2} \pm it_j$, both of which we know cannot happen. The computation of the residue of $L_d^{\#}(s)$ at each of its poles is then an immediate consequence of the formula.

\end{proof}

\section{Relating $L_d^{\#}(s)$ to $L_d(s)$}

We now extend the properties of $L_d^{\#}(s)$ given in Theorem  \ref{RelatingSd(s)}  to yield the following result about $L_d(s)$.

\begin{theorem}
The function $L_d(s)$ has meromorphic continuation to the region $\textup{Re}(s) > \frac{1}{2}$ with at most a simple pole at $s = \frac{2}{3}$ which occurs if and only if the Eisenstein contribution $\mathcal E(s)$  given in Theorem \ref{SpectralSide} has a pole at $s=\frac23.$ 

Fix $\varepsilon > 0.$ Then $L_d(s)$ has possible poles at the zeros of $\mathcal F\left(s,\frac{(d + 1)^2}{2d}\right)$ with $\frac{1}{2} < \textup{Re}(s) \leq 1$.
 In the region $\textup{Re}(s) > \frac{1}{2} + \varepsilon$  and $|s - \rho| > \varepsilon$ (for any pole $\rho$ of $L_d(s)$)  we have the bound
    $
      L_d(s) \ll_{d,\varepsilon} |s|^{\frac{7}{2}}.
   $
\end{theorem}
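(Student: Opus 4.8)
The plan is to read $L_d(s)$ off from the definition of $L_d^\#(s)$. Solving
\[
L_d^\#(s) = \mathcal F\!\left(s,\tfrac{(d+1)^2}{2d}\right) L_d(s) - s\,\mathcal F\!\left(s+1,\tfrac{(d+1)^2}{2d}\right) L_d^*(s)
\]
for $L_d(s)$ yields
\[
L_d(s) = \frac{L_d^\#(s) + s\,\mathcal F\!\left(s+1,\tfrac{(d+1)^2}{2d}\right) L_d^*(s)}{\mathcal F\!\left(s,\tfrac{(d+1)^2}{2d}\right)},
\]
an identity valid for $\text{Re}(s) > 1$ (where all series converge) which I will use to \emph{define} the continuation. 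First I would collect the analytic inputs for the three ingredients on $\text{Re}(s) > \tfrac12$: by Theorem \ref{RelatingSd(s)}, $L_d^\#(s)$ is meromorphic on $\text{Re}(s) > \tfrac13$ with $|L_d^\#(s)| \ll_\varepsilon |s|^3$ for $\tfrac13 + \varepsilon < \text{Re}(s) < 1$ away from poles, and by STEP 8 its only pole with $\text{Re}(s) > \tfrac12$ is the possible simple pole at $s = \tfrac23$ (the others sit on $\text{Re}(s) = \tfrac12$); by the remark following Definition \ref{LfunctionStar}, $L_d^*(s)$ converges absolutely, hence is holomorphic and $\ll_{d,\varepsilon} 1$, on $\text{Re}(s) \ge \tfrac12 + \varepsilon$; and $\mathcal F(s+1,\cdot)$ is holomorphic with $\mathcal F(s+1,\cdot) \ll_d |s|^{-1/2}$ by the steepest-descent asymptotic of Lemma \ref{SteepestDescent}.

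Next I would assemble the continuation and locate the poles. The numerator $L_d^\#(s) + s\,\mathcal F(s+1,\cdot) L_d^*(s)$ is meromorphic on $\text{Re}(s) > \tfrac12$ and holomorphic there except for a possible simple pole at $s = \tfrac23$ inherited from $L_d^\#$, the second summand being holomorphic at $\tfrac23$ since $L_d^*(\tfrac23)$ and $\mathcal F(\tfrac53,\cdot)$ are finite. Dividing by $\mathcal F(s,\cdot)$, holomorphic on $\text{Re}(s) > \tfrac13$, therefore produces a function meromorphic on $\text{Re}(s) > \tfrac12$ whose poles are the possible simple pole at $s = \tfrac23$ together with the zeros of $\mathcal F(s,\cdot)$. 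Since $L_d(s)$ is holomorphic for $\text{Re}(s) > 1$ by Proposition \ref{Ld(s)convergence}, the latter can contribute poles only with $\tfrac12 < \text{Re}(s) \le 1$, giving exactly the claimed possible poles at the zeros of $\mathcal F(s,\frac{(d+1)^2}{2d})$. For the if-and-only-if at $s = \tfrac23$: the pole there occurs precisely when $L_d^\#$ has a pole at $\tfrac23$, which by Theorem \ref{RelatingSd(s)} happens iff $S_d(s)$ does, which by Theorem \ref{Sd(s)Theorem} happens iff the inner product $\langle P_1(*,s),\theta\overline{\theta_d}\rangle$ has a pole at $\tfrac23$; and by Theorem \ref{Theorem:EisensteinBound} that pole is exactly the one produced by the Eisenstein contribution $\mathcal E(s)$, since the cuspidal part contributes poles only on $\text{Re}(s)=\tfrac12$.

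For the growth bound, in $\text{Re}(s) > 1$ absolute convergence gives $L_d(s) \ll_d 1$, so I would focus on $\tfrac12 + \varepsilon < \text{Re}(s) \le 1$. There the numerator is $\ll_{d,\varepsilon} |s|^3$, the $L_d^\#$ term dominating while the second summand is $\ll |s|\cdot|s|^{-1/2}\cdot 1 = |s|^{1/2}$; and the asymptotic $|\mathcal F(s,\frac{(d+1)^2}{2d})| \sim C_d|s|^{-1/2}$ gives $|\mathcal F(s,\cdot)|^{-1} \ll_d |s|^{1/2}$, so multiplying yields $L_d(s) \ll_{d,\varepsilon} |s|^{3+1/2} = |s|^{7/2}$. \textbf{The main obstacle} is this denominator estimate: Lemma \ref{SteepestDescent} is stated for fixed $\text{Re}(s)$ as $|\text{Im}(s)| \to \infty$, so I must argue it is strong enough and sufficiently uniform across $\tfrac12 + \varepsilon \le \text{Re}(s) \le 1$ to conclude both that the zeros of $\mathcal F(s,\cdot)$ are confined to a bounded region (hence finitely many, so the poles of $L_d(s)$ do not accumulate and the restriction $|s-\rho|>\varepsilon$ is meaningful) and that $|\mathcal F(s,\cdot)|^{-1} \ll |s|^{1/2}$ holds uniformly away from those zeros.
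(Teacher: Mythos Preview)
Your proposal is correct and follows essentially the same approach as the paper: both solve the defining relation of $L_d^\#(s)$ for $L_d(s)$, invoke the meromorphic continuation and $|s|^3$ bound for $L_d^\#(s)$ from Theorem \ref{RelatingSd(s)}, the holomorphy and boundedness of $L_d^*(s)$ on $\textup{Re}(s)>\tfrac12$, and the steepest-descent asymptotic $|\mathcal F(s,\cdot)|\sim C_d|s|^{-1/2}$ to obtain both the pole structure and the $|s|^{7/2}$ bound. Your treatment is in fact slightly more explicit than the paper's in two places: you trace the if-and-only-if at $s=\tfrac23$ through the chain $L_d^\# \to S_d \to \langle P_1,\theta\overline{\theta_d}\rangle \to \mathcal E$, and you correctly flag the uniformity of Lemma \ref{SteepestDescent} across the strip as the point requiring care, an issue the paper passes over silently.
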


\begin{proof}
We first prove the following lemma, which we then use to show that the theorem is a consequence of the definition of $L_d^{\#}(s)$ and its properties that were found earlier.

\begin{lemma} \label{SteepestDescent}
  Fix $a > 0$, and let $s \in \mathbb C$ with $\textup{Re}(s) > \frac{1}{3}$.   Then if $s = \sigma + it$ and $\sigma$ is fixed with $|t| \rightarrow \infty$, we have
  \[
   \mathcal F(\sigma + it,a) = \left(\frac{2\pi}{|t|}\right)^{\frac{1}{2}} e^{-i (\log a)\cdot t} \left(2a^{-\sigma} + \mathcal O\left(|t|^{-1}\right)\right) a^{-\frac{1}{2}}.
  \]
\end{lemma}

\begin{proof}
 We may assume $t\to \infty.$ We use the method of steepest descent.
 Recall that
  \[
    \mathcal F(s,a) = \int\limits_0^1 \left(u^{- \frac{4}{3}} + u^{- \frac{2}{3}}\right) \left(a + \frac{(u - 1)^2}{2u}\right)^{-s} du.
  \]
  which can be written as  \[
    \mathcal F(\sigma + it, a) = \int\limits_0^1 f_{a,\sigma}(u)\, e^{itg_a(u)}\, du,
  \]
  where
  \[
    f_{a,\sigma}(u) = \left(u^{-\frac{4}{3}} + u^{-\frac{2}{3}}\right) \left(a + \frac{(u - 1)^2}{2u}\right)^{-\sigma}
  \]
  and
  \[
    g_a(u) = -\textup{log}\left(a + \frac{(u - 1)^2}{2u}\right).
  \]
  We then compute
  \[
    g_a^{\prime}(u) = \frac{-u^2 + 1}{u \left(u^2 + 2 (a - 1) u + 1\right)}
  \]
  and
  \[
    g_a^{\prime\prime}(u) = \frac{u^4 - 4u^2 - 4 (a - 1) u - 1}{u^2 \left(u^2 + 2 (a - 1) u + 1\right)^2}.
  \]
  Thus there is a non-degenerate saddle point at $u = 1$, and if $a \neq 2$, there is a non-degenerate saddle point at $u = -1$. These are the only saddle points.

  May may now apply the saddle point method; the relevant statement for our case is as follows (see \cite{MR950167}).

  \begin{theorem} \label{SaddlePointMethod}
   Suppose that $f(z)$ and $S(z)$ are holomorphic functions on an open, bounded, and simply connected set $\Omega_x \in \mathbb C^n$ such that $I_x = \Omega_x \cap \mathbb R^n$ is simply connected, $\textup{Re}(S(z))$ has a single maximum $x^0$ in $I_x$, and $x^0$ is a non-degenerate saddle point of $S$. Then as $\lambda \rightarrow \infty$, $I(\lambda) = \int\limits_{I_x} f(x) e^{\lambda S(x)} dx$ satisfies the asymptotic formula
    \[
      I(\lambda) = \left(\frac{2\pi}{\lambda}\right)^{\frac{n}{2}} e^{\lambda S\left(x^0\right)} \Big(f\left(x^0\right) + \mathcal O\left(\lambda^{-1}\right)\Big) \prod_{j = 1}^n (-\mu_j)^{-\frac{1}{2}}
    \]
    where $\mu_j$ are the eigenvalues of the Hessian of $S$ and $(-\mu_j)^{-\frac{1}{2}}$ are chosen so that they satisfy $\left|\textup{arg}\sqrt{-\mu_j}\right| < \frac{\pi}{4}$.
  \end{theorem}

  It follows from Theorem \ref{SaddlePointMethod} and the previous computations that for $\sigma$ fixed and $t \rightarrow \infty$, we have
  \[
   \mathcal  F(\sigma + it, a) = \left(\frac{2\pi}{t}\right)^{\frac{1}{2}} e^{-i \left(\textup{log}a\right) t} \Big(2a^{-\sigma} + \mathcal O\left(t^{-1}\right)\Big) a^{-\frac{1}{2}}.
  \]
\end{proof}

Recall that
  \[
    L_d^{\#}(s) = \mathcal F\left(s,\tfrac{(d + 1)^2}{2d}\right) L_d(s)\; - \;s\cdot \mathcal F\left(s + 1,\tfrac{(d + 1)^2}{2d}\right) L_d^*(s).
  \]
  It was shown in Theorem  \ref{RelatingSd(s)} that the only possible pole of $L_d^{\#}(s)$ with $\text{Re}(s) > \frac{1}{2}$ is a possible simple pole at $s = \frac{2}{3}$ and that  $L_d(s)$ is holomorphic for $\textup{Re}(s) > 1$, $L_d^*(s) \approx L_d\left(s + \frac{1}{2}\right)$ for $\textup{Re}(s) > \frac{1}{2}$, and $\mathcal F\left(s,\frac{(d + 1)^2}{2d}\right)$ is holomorphic for $\textup{Re}(s) > \frac{1}{3}$. Thus if $L_d^{\#}(s)$ has a simple pole at $s = \frac{2}{3}$, then $L_d(s)$ must have a simple pole at $s = \frac{2}{3}$, and any other poles of $L_d(s)$ with $\textup{Re}(s) > \frac{1}{2}$ must be at zeros of $\mathcal F\left(s,\frac{(d + 1)^2}{2d}\right)$, with the order of the pole, if it occurs, being less than or equal to the order of vanishing of $\mathcal F\left(s,\frac{(d + 1)^2}{2d}\right)$ at that point, since in this region $L_d^{\#}(s)$ does not have any other poles and the second summand on the right side is holomorphic.

Now, for $\textup{Re}(s) > \frac{1}{2} + \varepsilon$, we have $\left|s - \frac{2}{3}\right| > \varepsilon$, $L_d^{\#}(s) \ll_{\varepsilon} |s|^3$. Since the second summand in the definition of $L_d^{\#}(s)$ is asymptotically smaller than the first summand, on that region, we have the bound
  \[
   \mathcal F\left(s,\tfrac{(d + 1)^2}{2d}\right) L_d(s) \ll_{\varepsilon} |s|^3.
  \]
  It follows from Lemma \ref{SteepestDescent}
that as $t \rightarrow \infty$,
  \[
    \left|\mathcal F\left(\sigma + it,\frac{(d + 1)^2}{2d}\right)\right| \sim c|t|^{-\frac{1}{2}}
  \]
  for some constant $c > 0$. 
  This immediately implies that for $|s-\rho| >\varepsilon$ (at poles $\rho$ of $L_d(s)$)  we have the bound
  $|L_d(s)| \ll |s|^\frac72.$ 
\end{proof}

\subsection*{Acknowledgements}
Dorian Goldfeld is partially supported by Simons Collaboration Grant 567168.


\normalsize

\end{document}